\long\def\symbolfootnote[#1]#2{\begingroup%
	\def\thefootnote{\fnsymbol{footnote}}\footnote[#1]{#2}\endgroup}
\newcommand{\lt}{\mathfrak l}
\newcommand{\p}{\mathcal P}
\newcommand{\Mod}{\mathrm{mod}}
\newcommand{\supp}{\mathrm{Supp}}
\def\imod#1{\allowbreak\mkern10mu({\operator@font mod}\,\,#1)}
\newtheorem{theorem}{Theorem}[section]
\newtheorem{lemma}[theorem]{Lemma}
\newtheorem{corollary}[theorem]{Corollary}
\newtheorem{proposition}[theorem]{Proposition}
\newtheorem*{theorem*}{Theorem}
\newtheorem{definition}[theorem]{Definition}
\newtheorem{observation}[theorem]{Observation}
\newtheorem{remark}[theorem]{Remark}
\newtheorem{conjecture}[theorem]{Conjecture}
\numberwithin{equation}{section}
\newcommand{\ignore}[1]{}
\newcommand{\mynote}[1]{}
\newenvironment{manualtheorem}[1]{%
	\theorem
}{\endtheorem}
\begin{document}

	\title{Alternating groups as products of cycle classes - II}
	
	\author{Harish Kishnani}
	\address{Indian Institute of Science Education and Research Mohali, Knowledge City, Sector 81, Mohali 140306, India}
	\email{harishkishnani11@gmail.com}
	
	\author{Rijubrata Kundu}
	\address{Indian Institute of Science Education and Research Mohali, Knowledge City, Sector 81, Mohali 140306, India}
	\email{rijubrata8@gmail.com}
	
	\author{Sumit Chandra Mishra}
	\address{Indian Institute of Science Education and Research Mohali, Knowledge City, Sector 81, Mohali 140306, India}
	\email{sumitcmishra@gmail.com}

	\subjclass[2010]{20B30, 20D06, 05A05, 20B05}
	\today 
	\keywords{alternating groups, product of conjugacy classes}
	\begin{abstract}
		Given integers $k,l\geq 2$, where either $l$ is odd or $k$ is even, let $n(k,l)$ denote the largest integer $n$ such that each element of $A_n$ is a product of $k$ many $l$-cycles. In 2008, M. Herzog, G. Kaplan and A. Lev  conjectured that $\lfloor \frac{2kl}{3} \rfloor \leq n(k,l)\leq \lfloor \frac{2kl}{3}\rfloor+1$. It is known that the conjecture holds when $k=2,3,4$. Moreover, it is also true when $3\mid l$. In this article, we determine the exact value of $n(k,l)$ when $3\nmid l$ and $k\geq 5$. As an immediate consequence, we get that $n(k,l)<\lfloor \frac{2kl}{3}\rfloor$ when $k\geq 5$, which shows that the above conjecture is not true in general. In fact, the difference between the exact value of $n(k,l)$ and the conjectured value grows linearly in terms of $k$. Our results also generalize the case of $k=2,3,4$.
	\end{abstract}
	
	\maketitle
	
	\section{Introduction}
	
	The study of products of conjugacy classes is a well established theme in the theory of finite groups. It also plays an important role in the theory of finite non-abelian simple groups. Many fundamental results as well as open problems exist in this direction. Let $G$ be a finite group and $C_1,C_2,\ldots, C_k$ be conjugacy classes of $G$. The product of these conjugacy classes, defined by $C_1C_2\cdots C_k:=\{x_1x_2\cdots x_r\mid x_i\in C_i, 1\leq i\leq r\}$ is invariant under conjugation and thus a union of conjugacy classes of $G$. In particular, for an integer $k\geq 2$, the $k$-th power of a conjugacy class, denoted by $C^k$ is simply $\underbrace{CC\cdots C}_{k\; times}$. A well-known conjecture of Arad and Herzog states that if $G$ is a finite non-abelian simple group, then the product of two non-trivial conjugacy classes is never a conjugacy class. This has been proved in the affirmative in many cases (see \cite{bfm}, Section 2 for a survey). On the other hand, the well-known Thompson's conjecture states that in a finite non-abelian simple group $G$, there exists a conjugacy class $C$ such that $C^2=G$. Hs\"u in \cite{xu} proved this for the alternating groups. Ellers and Gordeev (see \cite{eg}) achieved a break-through in this direction when they proved it for simple finite groups of Lie type over  field size greater than 8. Nonetheless, the conjecture remains open for these groups over small field size. Another fundamental result due to Guralnick and Malle (see \cite{gm}) states that there exists two conjugacy classes $C_1,C_2$ in a finite non-abelian simple group $G$ such that $C_1C_2\supseteq G\setminus \{1\}$.
	
	For a finite non-abelian simple group $G$, it is not difficult to see that there exists a natural number $m$ such that $C^m=G$ for every non-trivial conjugacy class $C$ of  $G$. The least natural number $m$ such that $C^m=G$ for every non-trivial conjugacy class $C$ of $G$  is called the covering number of $G$, denoted by $cn(G)$. More generally, the extended covering number denoted by $ecn(G)$ is the least number $m$ such that any collection of $m$ conjugacy classes $C_1,C_2,\ldots C_m$ satisfies $C_1\cdots C_m=G$. It was proved by Stavi that $cn(A_n)=\lceil\frac{n}{2}\rceil$. Dvir (see Chapter 3, \cite{ash}) developed many properties of products of conjugacy classes in $A_n$. He proved that $ecn(A_n)=cn(A_n)+1=\lceil \frac{n}{2} \rceil +1$. We refer the reader to \cite{ash}, for more on the covering number of finite simple groups.
	
	\medskip
	
	In this article, we continue the study of powers of cycle-classes in symmetric and alternating groups which has considerable literature. For an integer $2\leq l\leq n$, let $C_l$ denote the conjugacy class of $l$-cycles in $S_n$. Given natural numbers $k,l\geq 2$ where either $k$ is even or $l$ is odd, let $n(k,l)$ denote the largest natural number $n$ such that $C_l^k=A_n$, that is, every element in $A_n$ can be written as a product of $k$ many $l$-cycles.
	
	\medskip
	
	E. Bertram proved that $n(2,l)=\lfloor \frac{4l}{3} \rfloor+1$ when $l>2$ (see Theorem~\ref{n(2,l)_bounds}).  Bertram and Herzog obtained the exact value of $n(3,l)$ and $n(4,l)$ when $l>2$ (see Theorem~\ref{n(3,l)_bounds} and Theorem~\ref{n(4,l)_bounds}). In an attempt to generalize these results, the authors in \cite{hgl} gave a general upper bound for $n(k,l)$ (see Theorem~\ref{general_upper_bound_n(k,l)}). Further, they conjectured the following:
	
	\begin{conjecture}[Conjecture 1.1, \cite{hgl}]\label{mainconjecture}
		Let $k\geq 2$, $l\geq 3$ and assume that $l$ is odd or $k$ is even. Then $\lfloor \frac{2}{3}kl \rfloor\leq n(k,l)\leq \lfloor\frac{2}{3}kl\rfloor+1$.
	\end{conjecture}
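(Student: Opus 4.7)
My approach splits the conjecture into its two inequalities. For the upper bound $n(k,l)\leq \lfloor \frac{2kl}{3}\rfloor+1$, I would appeal to the general upper bound in Theorem~\ref{general_upper_bound_n(k,l)} of Herzog-Kaplan-Lev referenced in the introduction, and verify by direct arithmetic that it specializes to the claimed quantity under the parity hypothesis that either $l$ is odd or $k$ is even. This is essentially computational: translating the abstract bound into the desired floor expression and checking that equality holds in the appropriate residue classes of $kl$ modulo $3$.

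For the lower bound $n(k,l)\geq \lfloor \frac{2kl}{3}\rfloor$, the natural plan is induction on $k$, using the base cases $k\in\{2,3,4\}$ supplied by Theorem~\ref{n(2,l)_bounds}, Theorem~\ref{n(3,l)_bounds}, and Theorem~\ref{n(4,l)_bounds}. The inductive step would reduce the claim for $k$ to that for $k-3$, exploiting the identity $\lfloor \frac{2kl}{3}\rfloor - \lfloor \frac{2(k-3)l}{3}\rfloor = 2l$, which is independent of the residue of $l$ modulo $3$. Concretely, given $\sigma\in A_n$ with $n=\lfloor \frac{2kl}{3}\rfloor$, the objective is to exhibit three $l$-cycles $c_1,c_2,c_3$ such that $(c_1c_2c_3)^{-1}\sigma$ is supported on at most $\lfloor \frac{2(k-3)l}{3}\rfloor$ points; then, viewing the residual permutation inside a smaller alternating group, the induction hypothesis provides its factorization as a product of $k-3$ many $l$-cycles, and concatenation yields the required $k$-fold factorization of $\sigma$.

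The main obstacle is precisely this reduction step, which demands a thorough case analysis on the cycle type of $\sigma$. When $\sigma$ has many fixed points or short cycles, absorbing $2l$ letters into a product of three overlapping $l$-cycles is straightforward: one arranges the three cycles so that their combined action matches $\sigma$ on a chosen $2l$-subset of its support. The difficulty concentrates in cycle types where $\sigma$ consists of one or two very long cycles with lengths incompatible with a clean split: the three $l$-cycles cannot simultaneously slice off the required $2l$ letters while leaving the remainder inside $A_{\lfloor 2(k-3)l/3\rfloor}$, because any combination forces either the wrong parity on the residual or a support spill of $l-1$ or $l-2$ extra points. The abstract's assertion that the conjecture fails for $k\geq 5$ when $3\nmid l$ pinpoints exactly this obstruction: the $2l$-at-a-time reduction cannot be realized uniformly when $l\not\equiv 0\pmod 3$, and the plan above would therefore succeed for $3\mid l$ (recovering the known case) but must be retreated from, or the lower bound weakened, in the remaining residue classes.
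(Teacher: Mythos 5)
The statement you were asked to prove is not a theorem of this paper at all: it is Conjecture 1.1 of Herzog--Kaplan--Lev, quoted here precisely so that the paper can \emph{refute} it. Theorem A shows that for $k\geq 5$ and $3\nmid l$ one has $n(k,l)=\frac{2kl}{3}-\frac{2k}{3}+1$ when $l\equiv 1\;(\Mod\;3)$ and $n(k,l)=\frac{2kl}{3}-\frac{k}{3}+\epsilon$ with $\epsilon\in\{0,1\}$ when $l\equiv 2\;(\Mod\;3)$, so the deficit below $\lfloor \frac{2kl}{3}\rfloor$ grows linearly in $k$ and the conjectured lower bound is false for every $k\geq 5$ once $3\nmid l$. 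Consequently there is no proof of this statement in the paper, and no proof can exist; any proposal that sets out to establish the lower bound $n(k,l)\geq\lfloor\frac{2kl}{3}\rfloor$ in full generality is doomed from the start.

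To your credit, you diagnosed the failure point accurately: your upper-bound argument via Theorem~\ref{general_upper_bound_n(k,l)} is sound for $l>2$, and your closing paragraph correctly locates the obstruction in the reduction step when $3\nmid l$, even anticipating that the induction would survive only for $3\mid l$. But this self-diagnosis means your text is not a proof of the stated conjecture; it is an explanation of why one cannot be given. The honest conclusions to draw are (i) the conjecture holds for $k\in\{2,3,4\}$ and for $3\mid l$ (Theorems~\ref{n(2,l)_bounds}, \ref{n(3,l)_bounds}, \ref{n(4,l)_bounds}, \ref{exact_value_for_n(k,l)_l_divisible_by_3_l_odd}, \ref{n(k,l)_exact_value_k_even_l_divisible_by_3}), and (ii) it fails otherwise, with the counterexamples supplied by the indecomposability arguments of \Cref{indecomposable} applied to products of many $2$-cycles (and one short cycle) as in \Cref{UB_for_l=1(mod 3)}, \Cref{UB_for_odd_l}, and \Cref{UB_for_l_odd_k_odd_l=2(mod3)}. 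If you want to salvage something from your plan, the $2l$-at-a-time reduction should be replaced by the paper's strategy: decompose $\sigma$ into two disjoint even permutations of controlled support using the abundance of $2$- and $3$-cycles guaranteed by \Cref{cycle_bound_inequality}, and induct with step $2$ or $4$ rather than $3$.
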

	
	\noindent They verified the above conjecture when $3\mid l$, $l\geq 9$ and $l$ is odd. With these assumptions, they also proved that $n(k,l)=\frac{2}{3}kl$ when $k$ is odd (see Theorem~\ref{exact_value_for_n(k,l)_l_divisible_by_3_l_odd}). Very recently (see \cite{kkm}), we extended the above result with the same hypothesis except we assumed $k$ even (thus $l$ can be both odd or even) in which case it was seen that $n(k,l)=\frac{2}{3}kl+1$ (see Theorem~\ref{n(k,l)_exact_value_k_even_l_divisible_by_3}). This verified Conjecture 1.2 in \cite{hgl}. Further we also determined the value of $n(k,3)$ (see Theorem~\ref{n(k,3)_exact_value}). In this article, we prove that Conjecture~\ref{mainconjecture} does not hold. Furthermore, we determine the value of $n(k,l)$ when $k\geq 5$, $l\geq 2$ and $3\nmid l$. Our main theorem is as follows:
	
	\begin{manualtheorem}{A}\label{maintheorem}
		Let $k\geq 2$ and $l>3$ be two natural numbers such that $k$ is even, or $l$ is odd and $3\nmid l$. Then 
		\[n(k,l)=
			\begin{cases}
			\frac{2k(l-1)}{3}+2 & \text{when $l\equiv 1\;(\Mod\;3)$};\\
			\frac{k(2l-1)}{3}+1 & \text{when $l\equiv 2\;(\Mod\; 3)$, $k\not\equiv 1\;(\Mod\;4)$};\\
			\frac{k(2l-1)}{3}   & \text{when $l\equiv 2\;(\Mod\;3)$, $k\equiv 1\;(\Mod\;4)$}.
			\end{cases}
		\]
	\end{manualtheorem}
	
	\noindent We prove this theorem in three separate sections. In \Cref{Proof_of_main_theorem_l=1(mod 3)} we prove it for $l\equiv 1\;(\Mod\; 3)$. In \Cref{Proof_of_main_theorem_k_even_l=2(mod 3)} we prove it when $l\equiv 2\;(\Mod\; 3)$ and $k$ is even. Finally in \Cref{complete_proof}, we prove it when $l\equiv 2\;(\Mod\; 3)$ and $k$ is odd.
	\noindent We also have the following.
	\begin{manualtheorem}{B}\label{maintheorem2}
		For $k\geq 2$ even, we have $n(k,2)=k+2$.
	\end{manualtheorem}
	
	\noindent In \Cref{survey}, we prove the above theorem and develop some tools which will be required in the proof of our main theorem.

	The following two remarks are in order.
	
	\begin{remark}
		Note that $n(k,l)=\frac{2kl}{3}-\frac{k}{3}+\epsilon$ where $\epsilon\in \{0,1\}$ when $l\equiv 2\;(\Mod\; 3)$. Also $n(k,l)=\frac{2kl}{3}-\frac{2k}{3}+1$ when $l\equiv 1\;(\Mod\;3)$. Comparing with Conjecture~\ref{mainconjecture}, we see that the difference between the conjectured value and the actual value grows linearly in terms of $k$. Further, our expressions for the actual value also holds for $k=2,3,4$ when $3\nmid l$.
	\end{remark}
	
	\begin{remark}
		It is clear that $n(k,l)\geq l$. Note that from \Cref{maintheorem}, we have $n(k,l)\geq l+2$. Thus if $l$ is odd and $n'$ is such that $l+2\leq n'\leq n(k,l)$, then $C_l$ is a conjugacy class of $A_n$ and $C_l^k=A_{n'}$.
	\end{remark}
	 
	 Finally we mention that the results on products of conjugacy classes has considerable application in the theory of word maps on finite simple groups. For example, Bertram's result on $n(2,l)$ can be used to show that the word $x^Ny^N$, where $N=p^aq^b$ such that $p,q$ are primes, is surjective on the alternating groups. Moreover, it is also known to be surjective on all finite non-abelian simple groups (see \cite{GLOST}). On the other hand, the Thompson's conjecture mentioned above implies the Ore's Conjecture, which states that every element in a finite non-abelian simple group is a commutator. Ore's conjecture is now proved in the affirmative (see the beautiful article by Malle, \cite{ma} and references therein). We also refer the readers to an excellent survey article by Shalev (see \cite{sh}, Section 6) for more on this.
	 
	\subsection{Notations} We use some standard notations throughout this paper. Let $\sigma \in S_n$. If $\sigma$ is a cycle then $\mathfrak{l}(\sigma)$ denotes the length of $\sigma$. Let $\supp(\sigma):=\{i \mid \sigma(i)\neq i\}$ denote the support of $\sigma$. Two permutations $\sigma,\tau \in S_n$ are called disjoint if $\supp(\sigma)\cap \supp(\tau) = \emptyset$. It is well known that any permutation can be written as a product of disjoint cycles. Moreover, such a decomposition is unique up to cyclic shifts within each cycle and the order in which the cycles are written. For $\sigma \in S_n\setminus\{1\}$, $dcd*(\sigma)$ denotes  a disjoint cycle decomposition of $\sigma$ where each cycle has length greater than 1. Let $m_{\sigma}$ denote the number of symbols moved by $\sigma$, that is, $m_{\sigma}=|\supp(\sigma)|$. Let $n_{\sigma}$ denote the number of cycles in $dcd*(\sigma)$. For any $i\geq 2$, let $n_i(\sigma)$ denote the number of $i$-cycles in $dcd*(\sigma)$. It is clearly seen that $\sum_{i}n_i=n_{\sigma}$ and $\sum_{i}in_i=m_{\sigma}$. Finally, we mention that product of permutations will be executed from right to left.
	
	
	\section{A survey of known results and some generalities}\label{survey}
	
	In this section, we first collect some known results in detail which have been mentioned briefly in the introduction. Many of these results are required as they serve as base cases for  inductive proofs throughout this article. We also state and prove a general result (see \Cref{indecomposable}) which enable us to give counterexamples to Conjecture~\ref{mainconjecture}. In the sections to follow, we will use it to give improved upper bounds for $n(k,l)$ when $k\geq 5$ and $3\nmid l$. Finally we will record some more general observations and outline a rough strategy that we incorporate to prove lower bounds for $n(k,l)$ eventually leading to the proof of our main theorem.
	
	E. Bertram in \cite{be} determined the value of $n(k,l)$ for $k=2$. He proved the following result.
	
	\begin{theorem}[Corollary 2.1, \cite{be}]\label{n(2,l)_bounds} 
		Given natural numbers $n,l\geq 2$,  every element of $A_n$ can be written as a product of two $l$-cycles in $S_n$ if and only if $\lfloor\frac{3n}{4}\rfloor\leq l\leq n$ or $n=4$ and $l=2$. In other words, $n(2,l)=\lfloor\frac{4}{3}l\rfloor+1$ when $l>2$ and $n(2,2)=4$.
	\end{theorem}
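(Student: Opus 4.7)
My plan is to prove the theorem in two directions by first isolating a sharp necessary condition on factorizations. Suppose $\sigma = \alpha\beta$ with $\alpha,\beta$ two $l$-cycles in $S_n$; set $X = \supp(\alpha)\cup\supp(\beta)$ and $a = |\supp(\alpha)\cap\supp(\beta)|$, so $|X| = 2l-a$. If $a = 0$, then $\alpha$ and $\beta$ are disjoint cycles and $\sigma$ has cycle type $(l, l, 1^{n-2l})$; otherwise $X$ is a single orbit of $\langle\alpha,\beta\rangle$, and a genus-zero Riemann--Hurwitz bound applied to the triple $(\alpha|_X, \beta|_X, (\alpha\beta)|_X^{-1})$ gives $2(|X|-l+1) + \ell_X(\sigma) \leq |X|+2$, so the number of cycles of $\sigma$ on $X$ is at most $a$. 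Combining with $a \leq 2l - m_\sigma$ (forced by $\supp(\sigma) \subseteq X$), the existence of $a$ requires the clean inequality
\[
m_\sigma + n_\sigma \leq 2l.
\]
Thus any factorizable $\sigma$ either has cycle type $(l, l, 1^{n-2l})$ or satisfies this bound.

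For the upper bound, given $n \geq \lfloor 4l/3\rfloor + 2$, I would produce an explicit $\sigma\in A_n$ violating both alternatives. A product of $k$ disjoint transpositions has $m_\sigma + n_\sigma = 3k$, so I would choose $k$ even with $3k > 2l$; if $k$ transpositions alone do not fit within the available $n$ symbols or have the wrong sign, I would replace one transposition by a short $3$- or $4$-cycle to ensure $\sigma \in A_n$ while still violating the bound. A short arithmetic check pinpoints the threshold as $n = \lfloor 4l/3\rfloor + 2$.

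The substantive part is the lower bound. For $n \leq \lfloor 4l/3\rfloor + 1$, I first verify that every $\sigma\in A_n$ automatically satisfies $m_\sigma + n_\sigma \leq 2l$; this is a short case check using $m_\sigma \leq n$, $n_\sigma \leq m_\sigma/2$, and the parity constraint $m_\sigma \equiv n_\sigma \pmod{2}$ coming from $\sigma\in A_n$ (which prevents $m_\sigma + n_\sigma$ from equaling the odd number $2l+1$). The harder task is to \emph{construct} the factorization $\sigma = \alpha\beta$ explicitly. My plan is induction on $n_\sigma$, built around the splitting--merging identity
\[
(a_1, a_2, \ldots, a_{p+q-1}) = (a_1, \ldots, a_p)\,(a_p, a_{p+1}, \ldots, a_{p+q-1})
\]
which reshapes cycles within a product. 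The inductive step merges two short cycles of $\sigma$ into a single longer cycle (absorbing a free fixed point of $\sigma$ when extra length is needed), eventually reducing to a single cycle of length at most $2l-1$ which splits as a product of two overlapping $l$-cycles. The main obstacle is the tight regime where $m_\sigma + n_\sigma$ is close to $2l$: the overlap $a$ then has almost no room to maneuver, so the supports of $\alpha$ and $\beta$ must line up very precisely to each have size exactly $l$. A careful case analysis organized by cycle type of $\sigma$ and by the parities of $n$ and $l$ is required. The edge case $l = 2$, where the inequality becomes so restrictive that the general construction fails, is handled directly to yield $n(2,2) = 4$.
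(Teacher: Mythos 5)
The paper does not prove this statement at all: it is quoted verbatim from Bertram \cite{be} (and used later only as a base case), so there is no in-paper proof to measure you against. Judged on its own, your outline is architecturally sound and in fact mirrors the toolkit this paper assembles for general $k$. Your necessary condition is exactly \Cref{general_product_condition} specialized to $k=2$, and your derivation of it from a transitive genus-zero bound is the same mechanism as \Cref{theorem_of_Ree}/\Cref{ree_corollary}; your counterexample construction for $n\geq\lfloor 4l/3\rfloor+2$ (disjoint transpositions, adjusted by one $3$- or $4$-cycle for parity) is precisely the type of witness the paper uses in its own upper-bound arguments; and your verification that $n\leq\lfloor 4l/3\rfloor+1$ forces $m_\sigma+n_\sigma\leq 2l$ via $n_\sigma\leq m_\sigma/2$ together with the evenness of $m_\sigma+n_\sigma$ (\Cref{even_permutation_characterization}) is correct and clean.

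The one substantive reservation is that the entire content of Bertram's theorem sits in the sufficiency direction, and that is the part you leave as a plan rather than a proof: the induction on $n_\sigma$ via the splitting--merging identity is the right idea (it is essentially Bertram's original argument), but the ``tight regime'' case analysis you defer is where all the work is, so as written the argument is a correct skeleton rather than a complete proof. Within this paper's framework you could short-circuit that entire step: \Cref{product_of_two_cycles} with $l_1=l_2=l$ says that $\sigma=C_1C_2$ with both cycles of length $l$ if and only if either $\sigma$ has cycle type $(l,l)$ on its support, or $2l=m_\sigma+n_\sigma+2s$ with $s\geq 0$ and $0\leq m_\sigma-n_\sigma$; the first of these conditions is exactly ``$m_\sigma+n_\sigma\leq 2l$ and $m_\sigma+n_\sigma$ even,'' which you have already established, and the second is automatic. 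So your necessity argument plus that characterization closes the proof in a few lines, whereas your from-scratch route amounts to reproving the $l_1=l_2$ case of that theorem and would need the deferred case analysis actually carried out.
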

 
	\noindent In \cite{bh}, the authors computed $n(3,l)$ and $n(4,l)$. 
	
	\begin{theorem}[Theorem 2, \cite{bh}]\label{n(3,l)_bounds}
		For $n,l\geq 2$, each element $\sigma\in A_n$ is a product of three $l$-cycles in $S_n$ if and only if $l$ is odd and either $\lceil\frac{n}{2}\rceil\leq l \leq n$, or $n=7$ and $l=3$. In other words, for $l$ odd and $l\neq 3$, $n(3,l)=2l$  and $n(3,3)=7$.
	\end{theorem}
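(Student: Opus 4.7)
The theorem has two directions: (i) necessity, that $A_n\subseteq C_l^3$ forces $l$ odd with $l\le n\le 2l$, or $(n,l)=(7,3)$; and (ii) sufficiency in those cases. I will handle parity first, then sufficiency, then necessity.

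\textbf{Parity.} An $l$-cycle has sign $(-1)^{l-1}$, so a product of three has sign $(-1)^{l-1}$, forcing $l$ odd for $C_l^3\subseteq A_n$. The requirement $l\le n$ is trivial. From now on I assume $l$ odd.

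\textbf{Sufficiency ($l\le n\le 2l$).} Given $\sigma\in A_n$, the plan is to pick an $l$-cycle $\gamma$ so that $\tau:=\sigma\gamma^{-1}$ lies in $C_l^2$; Theorem~\ref{n(2,l)_bounds} then writes $\tau=\alpha\beta$ with $\alpha,\beta\in C_l$, yielding $\sigma=\alpha\beta\gamma\in C_l^3$. Bertram's theorem gives $C_l^2=A_m$ for $m\le\lfloor 4l/3\rfloor+1$, so it is enough that $\supp(\tau)$ have size at most $\lfloor 4l/3\rfloor+1$, which (since $n\le 2l$) means $\gamma$ must remove at most $\lceil 2l/3\rceil-1$ symbols of $\supp(\sigma)$ from $\supp(\tau)$---a mild shrinkage. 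The construction of $\gamma$ proceeds by cases on the cycle structure of $\sigma$: if $\sigma$ contains a sufficiently long cycle, I take $\gamma$ to agree with it on a block of length $\lceil 2l/3\rceil$, cancelling those symbols in $\tau$; if $\sigma$ is built from short cycles, I thread $\gamma$ through several so that $\gamma$ matches $\sigma$ on $\supp(\gamma)$. The exceptional case $(n,l)=(7,3)$ is done by hand, running through the cycle types in $A_7$ and leveraging that $C_3^2=A_m$ for $m\le 5$.

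\textbf{Necessity ($n\ge 2l+1$, excluding $(7,3)$).} I must exhibit $\sigma\in A_n\setminus C_l^3$. The primary tool is Frobenius's class formula
\[
\#\{(\alpha,\beta,\gamma)\in C_l^3:\alpha\beta\gamma=\sigma\}=\frac{|C_l|^3}{n!}\sum_{\chi\in\mathrm{Irr}(S_n)}\frac{\chi(c_l)^3\,\chi(\sigma^{-1})}{\chi(1)^2},
\]
together with the Murnaghan--Nakayama rule, which forces $\chi^\lambda(c_l)=0$ outside the small family of $\lambda\vdash n$ admitting an $l$-border strip (essentially hooks). For a judiciously chosen $\sigma$, the surviving terms cancel, certifying $\sigma\notin C_l^3$. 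A complementary, more combinatorial route applies the Riemann--Hurwitz bound on each $\langle\alpha,\beta,\gamma\rangle$-orbit $O$: if $k$ of the three generators are supported in $O$, then $c(\sigma|_O)\le k(l-1)-|O|+2$, which obstructs $\sigma$'s with many cycles.

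\textbf{Main obstacle.} The hard part is the \emph{sharp} upper bound $n\le 2l$. A naive orbit/support argument only gives $n\le 3l-2$ (the bound for an $n$-cycle on a single transitive orbit), so closing the gap of $l-2$ requires either an explicit Murnaghan--Nakayama computation on a cycle type of $\sigma$ tuned to the oddness of $l$, or a subtle combination of genus, orbit, and parity constraints. A secondary challenge is cleanly isolating the $(7,3)$ exception: the sufficiency proof must cover it and the necessity proof for $l=3$ must obstruct $(8,3),(9,3),\dots$ without accidentally ruling out $(7,3)$.
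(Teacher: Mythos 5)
First, note that the paper does not prove this statement at all: it is imported verbatim as Theorem 2 of \cite{bh}, so there is no internal proof to compare against. Judged on its own terms, your sketch has genuine gaps in both directions, and in each case the fix is an elementary tool that the paper itself uses elsewhere for exactly this kind of claim.

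For sufficiency, your reduction target is too strong to be achievable. You want an $l$-cycle $\gamma$ with $m_{\sigma\gamma^{-1}}\leq\lfloor 4l/3\rfloor+1$ so that Theorem~\ref{n(2,l)_bounds} applies, but take $\sigma\in A_{2l}$ a product of $l-1$ transpositions: a point of $\supp(\gamma)$ becomes a fixed point of $\tau=\sigma\gamma^{-1}$ only if $\gamma$ agrees with $\sigma$ at its $\sigma$-preimage, and since an $l$-cycle with $l\geq 3$ cannot agree with a transposition at both of its points, $\gamma$ creates at most $\tfrac{l-1}{2}$ new fixed points. Hence $m_{\tau}\geq 2l-2-\tfrac{l-1}{2}=\tfrac{3l-3}{2}$, which exceeds $\lfloor 4l/3\rfloor+1$ for all $l\geq 17$; no choice of $\gamma$ can reach your target. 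The correct criterion to aim for is not small support but the Boccara-type condition of Theorem~\ref{product_of_two_cycles}: write $\sigma=C_1C_2$ with $\mathfrak{l}(C_1)=l$ and $\mathfrak{l}(C_2)=2l-1$ (possible whenever $m_{\sigma}+n_{\sigma}\leq 3l-1$ with the right parity and $m_{\sigma}-n_{\sigma}\geq l-1$, the small-support leftovers being handled separately), and then split $C_2$ into two $l$-cycles by Lemma~\ref{cycles_as_products_of_cycles}. This is precisely the mechanism of Propositions~\ref{bounded_part_partitions_l=2(mod 3)} and \ref{bounded_part_partitions_l=1(mod 3)}.

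For necessity, the character-theoretic route via Frobenius and Murnaghan--Nakayama is not needed, and you concede you cannot close the sharp bound with it; the difficulty you flag as the ``main obstacle'' dissolves once the test permutation is chosen correctly. Your remark that the orbit/genus bound only yields $n\leq 3l-2$ is true only for an $n$-cycle; the bound should instead be applied to a permutation with as many cycles as possible. Take $n=2l+1$ and $\sigma\in A_{2l+1}$ a disjoint product of $l-1$ transpositions and one $3$-cycle (even since $l$ is odd). Then $m_{\sigma}+n_{\sigma}=(2l+1)+l=3l+1>3l$, so Lemma~\ref{general_product_condition} gives $\sigma\notin\p(3,l;2l+1)$ in one line, provided $dcd*(\sigma)$ contains no $l$-cycle --- which holds exactly when $l\neq 3$, and this is precisely why $(7,3)$ escapes as an exception. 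This is the argument the paper runs as the $k=3$ base case inside Propositions~\ref{UB_for_l=1(mod 3)} and \ref{UB_for_l_odd_k_odd_l=2(mod3)}. Your parity observation is correct, and the $(7,3)$ case-check is reasonable, but as written both the construction of $\gamma$ and the obstruction for $n\geq 2l+1$ are missing the steps that actually carry the proof.
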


	\begin{theorem}[Theorem 3, \cite{bh}]\label{n(4,l)_bounds}
		For $n,l\geq 2$, each element $\sigma\in A_n$ is a product of four $l$-cycles in $S_n$ if and only if:
		\begin{enumerate}
			\item $\lceil \frac{3n}{8} \rceil \leq l\leq n$ if $n\not\equiv 1\;(\Mod\;8)$,
			\item $\lfloor\frac{3n}{8}\rfloor \leq l\leq n$ if $n\equiv 1\;(\Mod\;8)$,
			\item $n=6$ and $l=2$.
		\end{enumerate}
		In other words, $n(4,l)=\lfloor \frac{8l}{3} \rfloor +1$ when $3\mid l$ or $l=2$, and $n(4,l)=\lfloor \frac{8l}{3}\rfloor$ when $l>3$ and $3\nmid l$. 
	\end{theorem}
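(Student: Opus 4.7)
The plan is to prove the equivalence in two directions, using Bertram's $k=2$ result (\Cref{n(2,l)_bounds}) as the central tool. For the forward direction (necessity), I would exhibit concrete elements of $A_n$ that fail to be products of four $l$-cycles once $n$ exceeds the stated threshold. The strategy is a double application of the support-and-cycle-count obstruction underlying \Cref{n(2,l)_bounds}: if $\sigma = (c_1 c_2)(c_3 c_4)$, set $\pi = c_1 c_2$ and $\tau = c_3 c_4$; each factor is constrained by Bertram's theorem to have a specific support/cycle-count signature, and combining the two yields a sharp inequality of the form $m_\sigma + 2 n_\sigma \leq$ (expression in $l$, with a $\pm 1$ correction depending on $n \bmod 8$). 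Choosing $\sigma$ with maximal support and few cycles (for instance, an $(n-1)$-cycle with one fixed point, or a full $n$-cycle when $n$ is odd) then violates this inequality as soon as $n$ passes the stated threshold.

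For the backward direction (realizability), I would proceed constructively, reducing to Bertram's $k=2$ case. Given $\sigma \in A_n$ with $n$ in the allowed range, the goal is to choose two $l$-cycles $c_3, c_4$ so that $\sigma' := \sigma (c_3 c_4)^{-1}$ has support of size at most $\lfloor 4l/3 \rfloor + 1$. Then $\sigma'$ lives inside an $A_m$ with $m \leq n(2,l)$, so \Cref{n(2,l)_bounds} provides $\sigma' = c_1 c_2$ as a product of two $l$-cycles, yielding $\sigma = c_1 c_2 c_3 c_4$. The choice of $c_3, c_4$ is made by case analysis on the cycle type of $\sigma$: when $\sigma$ contains a long cycle, one aligns the supports of $c_3, c_4$ along that cycle so that multiplication by $(c_3 c_4)^{-1}$ both shortens the cycle and creates new fixed points; when $\sigma$ consists of many short cycles, $c_3$ and $c_4$ are used to merge cycles and concentrate the support. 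The sign of $c_3 c_4$ is matched to that of $\sigma$ to guarantee $\sigma' \in A_m$.

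The main obstacle, and the source of the three-case form of the statement, is the delicate arithmetic at the boundary. The split between cases (1) and (2) reflects that precisely when $n \equiv 1 \pmod{8}$ (equivalently, $3 \mid l$ at the threshold $l = 3n/8$), an extra factorization becomes available that is excluded for other residue classes; establishing it requires an explicit threshold construction rather than a black-box reduction. The exceptional case $(n,l) = (6,2)$ is handled by direct verification inside $A_6$. The most computationally delicate step in the backward direction is ensuring the support-reduction target $\lfloor 4l/3 \rfloor + 1$ is met in every cycle-type configuration of $\sigma$, especially when $\sigma$ has many short cycles whose total support is close to $n$; here one must pick $c_3, c_4$ carefully so as to consolidate enough of $\sigma$'s support into Bertram's range before invoking \Cref{n(2,l)_bounds}.
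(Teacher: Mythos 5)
First, a point of reference: the paper does not prove this statement at all --- it is quoted from Bertram--Herzog \cite{bh} and used only as the $k=4$ base case for the induction, so there is no in-paper proof to compare against. Judged on its own terms, your proposal has a genuine gap in the necessity direction. The witnesses to $\sigma\notin\p(4,l;n)$ are not long cycles but permutations with as \emph{many} cycles as possible: the relevant obstruction (Lemma~\ref{general_product_condition}, a consequence of Ree's theorem) is $m_{\sigma}+n_{\sigma}\leq 4l$, and an $(n-1)$- or $n$-cycle has $m_{\sigma}+n_{\sigma}\leq n+1$, which sits far below $4l$ throughout the claimed range --- indeed such cycles \emph{are} products of four $l$-cycles there, via \Cref{product_of_two_cycles} and \Cref{cycles_as_products_of_cycles}. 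The correct extremal elements are (near-)fixed-point-free involutions, for which $m_{\sigma}+n_{\sigma}=\tfrac{3n}{2}$. Worse, at the exact threshold even the naive bound is not violated: for $l\equiv 2\pmod 3$ and $n=\tfrac{8l+2}{3}$, the product of $\tfrac{n}{2}-2$ transpositions and one $4$-cycle satisfies $m_{\sigma}+n_{\sigma}=4l$ exactly. Your plan of ``combining two applications of the $k=2$ obstruction'' is only legitimate when the two pairs $c_1c_2$ and $c_3c_4$ have disjoint supports; when they do not, one must use Ree's theorem for the joint action of all four cycles, which refines the bound to $4l-m_{\sigma}-n_{\sigma}\geq 4-2T$ where $T$ is the number of orbits (Corollary~\ref{ree_corollary}, \Cref{indecomposable}). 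One must therefore first show the witness is $(2,2)$-indecomposable and then invoke transitivity --- precisely what the $k=4$ step inside \Cref{UB_for_odd_l} does. Without this, both your inequality (the posited form $m_{\sigma}+2n_{\sigma}\leq\cdots$ does not arise from any of these arguments) and your choice of extremal $\sigma$ fail.

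The sufficiency outline --- peel off $c_3c_4$ so that the remainder has support in Bertram's range and finish with \Cref{n(2,l)_bounds} --- is a plausible strategy in principle, but essentially all of its content is deferred to an unexecuted case analysis on cycle type, which is where the theorem actually lives. For comparison, the route this paper takes for general $k$ is structurally different: when $n_{\sigma}$ is small one writes $\sigma$ as a product of two long cycles via \Cref{product_of_two_cycles} and splits each by \Cref{cycles_as_products_of_cycles}; when $n_{\sigma}$ is large one extracts a disjoint even factor of controlled support from $dcd*(\sigma)$ and inducts on $k$. As written, your argument does not yet establish the theorem in either direction.
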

	
	\begin{remark}\label{newvalues_k=2_k=3_k=4}
		From the above three theorems, we observe that when $l>3$,
		\begin{enumerate}
			\item \[ n(2,l)=\left\lfloor \frac{4l}{3} \right\rfloor +1=
			\begin{cases}
			\frac{4l+2}{3} & \text{when $l\equiv 1\;(\Mod\;3)$}\\
			\frac{4l+1}{3} & \text{when $l\equiv 2\;(\Mod\;3)$};
			\end{cases}
			\]
			
			\medskip
			
			\item \hspace{1.8 cm} $n(3,l)=2l$ where $l$ is odd;
			
			\medskip
			
			\item \[\hspace{-5 mm}n(4,l)=\left\lfloor \frac{8l}{3} \right\rfloor =
			\begin{cases}
			\frac{8l-2}{3} & \text{when $l\equiv 1\;(\Mod\;3)$}\\
			\frac{8l-1}{3} & \text{when $l\equiv 2\;(\Mod\;3)$}.
			\end{cases}
			\]
			
		\end{enumerate}
		Thus the expressions in the above theorems matches the corresponding expressions in \Cref{maintheorem}. From now on, for $l>3$ and $3\nmid l$, we will use these new expressions for $n(k,l)$ when $k=2,3,4$.
	\end{remark}
	Some general results were obtained by Herzog, Lev and Kaplan in \cite{hgl}.  First they gave upper bounds for $n(k,l)$.
	
	\begin{theorem}[Theorem 3.3, \cite{hgl}]\label{general_upper_bound_n(k,l)}
		Let $k,l$ be natural numbers such that $k\geq 2$ and $l>2$. Suppose that either $l$ is odd or $k$ is even. Denote $n_1=\lfloor\frac{2kl}{3}\rfloor$ and $\delta=\frac{2kl}{3}-n_1$. Then
		\begin{enumerate}
			\item If $n_1\equiv 0\;(\rm{mod}\;4)$, then $n(k,l)\leq n_1+1$.
			\item If $n_1\equiv 1\;(\rm{mod}\;4)$, then $n(k,l)\leq n_1+1$.
			\item If $n_1\equiv 2\;(\rm{mod}\;4)$, then $n(k,l)\leq n_1+1$; if we further assume that $l>3$ and $\delta\in \{0,\frac{1}{3}\}$, then $n(k,l)\leq n_1$;
			\item If $n_1\equiv 3\;(\rm{mod}\;4)$, then $n(k,l)\leq n_1$.
		\end{enumerate}
	\end{theorem}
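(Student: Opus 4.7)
The plan is to produce, for each residue class of $n_1=\lfloor 2kl/3\rfloor$ modulo $4$ and for $n$ just above the claimed bound, an explicit element $\sigma\in A_n$ that cannot be written as a product of $k$ many $l$-cycles in $S_n$. The test elements will be chosen to have cycle types that maximize $m_\sigma$ (ideally $m_\sigma=n$) while keeping $n_\sigma$ as small as possible: typically a single $n$-cycle when $n$ is odd, an $(n-1,1)$-type when $n$ is even, or a product of an $(n-2)$-cycle and a transposition when parity forces an adjustment. Such a $\sigma$ has very few cycles spread over all of $\{1,\dots,n\}$, which is precisely the hardest configuration for expression as a short product of $l$-cycles.

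The main tool is a Bertram-type overlap inequality. If $\sigma=\pi_1\cdots\pi_k$ in $S_n$ with each $\pi_i$ an $l$-cycle, then tracking the cumulative support of the partial products $\pi_1\pi_2\cdots\pi_j$ and using the fact that each successive $\pi_i$ must overlap appropriately with the running product (else $n_\sigma$ grows too quickly), one can derive a bound of the shape
\[
n \;\le\; \tfrac{2}{3}kl \;+\; \kappa(\sigma),
\]
where the $2/3$ factor is exactly the extremal ratio already visible in Bertram's proof that $n(2,l)=\lfloor 4l/3\rfloor+1$ (see Theorem~\ref{n(2,l)_bounds}), and $\kappa(\sigma)$ is a small correction depending on $n_\sigma$ and the parity. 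Substituting the chosen test element into this inequality and taking integer parts produces each of the four claimed upper bounds.

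The four residue classes of $n_1\bmod 4$ arise from the interplay of two separate constraints: the parity of $n$ dictates which cycle type is available as a test element inside $A_n$ (so it shifts between odd and even $n$), while the residue of $2kl\bmod 3$ encoded by $\delta$ controls how much slack there is in the overlap inequality. The sharpening in case (3) under $\delta\in\{0,\tfrac{1}{3}\}$ corresponds precisely to the regime where $\tfrac{2}{3}kl$ is almost integral, so the inequality is tight enough to eliminate $n=n_1+1$; here one exploits the rigidity of the extremal configuration — each pair $\pi_i,\pi_j$ must overlap in exactly $l/3$ letters — to force a contradiction via the parity or the forbidden cycle type of~$\sigma$.

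The main obstacle is the delicate bookkeeping of parities and divisibility conditions needed to match the exact constants in each case, and especially the strengthening in case (3), which requires a refinement of the overlap inequality in its equality regime: one must show that the would-be extremal decomposition into $l$-cycles forces a rigid combinatorial pattern incompatible with the cycle type of the chosen $\sigma$. Once this tight version is in hand, the remaining cases (1), (2), (4) follow by plugging the right test element into the inequality and reducing modulo the relevant parity class.
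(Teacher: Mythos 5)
Your choice of test elements is backwards, and this is a fatal gap rather than a bookkeeping issue. The obstruction underlying this theorem is the necessary condition of Lemma~\ref{general_product_condition} (Ree's inequality): if $\sigma$ is a product of $k$ many $l$-cycles and has no $l$-cycle in $dcd*(\sigma)$, then $m_{\sigma}+n_{\sigma}\leq kl$. To violate this for $n$ just above $\lfloor 2kl/3\rfloor$ you must make $m_{\sigma}+n_{\sigma}$ as \emph{large} as possible, which means taking $n_{\sigma}$ as large as possible: a fixed-point-free product of $\lfloor n/2\rfloor$ transpositions has $m_{\sigma}+n_{\sigma}\approx 3n/2$, and $3n/2>kl$ exactly when $n>\tfrac{2}{3}kl$. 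That is where the $2/3$ comes from. Your proposed elements --- a single $n$-cycle, or an $(n-1)$-cycle times a fixed point --- minimize $n_{\sigma}$ and give $m_{\sigma}+n_{\sigma}\approx n+1\approx \tfrac{2}{3}kl+2\ll kl$, so the inequality provides no obstruction whatsoever. Worse, such elements genuinely \emph{are} products of $k$ many $l$-cycles in this range: by Lemma~\ref{cycles_as_products_of_cycles} a single cycle of length up to $k(l-1)+1$ lies in $\p(k,l;n)$, and $k(l-1)+1$ far exceeds $\tfrac{2}{3}kl+1$. So no contradiction can be extracted from your test elements; long cycles are the \emph{easiest} elements to express, not the hardest.

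The correct counterexamples, used both in \cite{hgl} and throughout this paper's own upper-bound arguments (Propositions~\ref{UB_for_l=1(mod 3)}, \ref{UB_for_odd_l}, \ref{UB_for_l_odd_k_odd_l=2(mod3)}), are products of as many disjoint $2$-cycles as possible, adjusted by one $3$-cycle or $4$-cycle to land in $A_n$. The four residue classes of $n_1$ modulo $4$ arise precisely from this parity adjustment: a product of $j$ transpositions is even iff $j$ is even, so whether one can use $\lfloor n/2\rfloor$ transpositions outright or must sacrifice a few points to a $3$- or $4$-cycle depends on $n\bmod 4$, and this sacrifice changes $m_{\sigma}+n_{\sigma}$ by just enough to account for the $n_1$ versus $n_1+1$ distinction (with $\delta$ controlling the remaining slack in case (3)). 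Your heuristic that the residues come from an ``overlap in exactly $l/3$ letters'' rigidity has no counterpart in the actual argument; the refinement in case (3) is again a parity count on $m_{\sigma}+n_{\sigma}$ for the adjusted involution-like element, not a structural analysis of the extremal decomposition. To repair the proof, replace your test elements by these high-$n_{\sigma}$ permutations and apply Lemma~\ref{general_product_condition} directly.
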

	
	\noindent From the above result it is clearly seen that $n(k,l)\leq \lfloor \frac{2kl}{3} \rfloor+1$ when $l>2$. They further gave a lower bound assuming some conditions on $l$ and $k$.
	
	\begin{theorem}[Theorem 3.4, \cite{hgl}]\label{exact_value_for_n(k,l)_l_divisible_by_3_l_odd}
		Let $k\geq 2$ and $3\mid l$, $l\geq 9$ and $l$ is odd. Then $\frac{2}{3}kl\leq n(k,l)\leq \frac{2}{3}kl+1$. Furthermore, if $k$ is odd then $n(k,l)=\frac{2}{3}kl$.
	\end{theorem}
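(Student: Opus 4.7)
The result has two ingredients: the range $\frac{2kl}{3} \le n(k,l) \le \frac{2kl}{3} + 1$, and the equality $n(k,l) = \frac{2kl}{3}$ when $k$ is odd. Both upper bounds come directly from \Cref{general_upper_bound_n(k,l)}. Since $3 \mid l$, the quantity $n_1 = \frac{2kl}{3}$ is an integer and $\delta = 0$, so each of cases (1)--(4) yields $n(k,l) \le n_1 + 1$. When $k$ is odd, write $l = 3m$ with $m$ odd (forced by $l$ odd and $3 \mid l$); then $n_1 = 2km$ with $km$ odd, so $n_1 \equiv 2 \pmod 4$. Case (3) applies because $l > 3$ and $\delta = 0$, sharpening the bound to $n(k,l) \le \frac{2kl}{3}$.

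The substance is the lower bound $n(k,l) \ge \frac{2kl}{3}$, which I would establish by induction on $k$ with step size two. The base cases $k \in \{2,3,4\}$ follow from \Cref{n(2,l)_bounds}, \Cref{n(3,l)_bounds} and \Cref{n(4,l)_bounds}: under $3 \mid l$ and $l$ odd, one has $n(2,l) = \tfrac{4l}{3} + 1$, $n(3,l) = 2l$ and $n(4,l) = \tfrac{8l}{3} + 1$, each at least the required $\tfrac{2kl}{3}$. For the inductive step with $k \ge 5$, set $n = \tfrac{2kl}{3}$ and take $\sigma \in A_n$. The strategy is to factor $\sigma = \sigma_1 \sigma_2$ as a product of two disjoint even permutations with $|\supp(\sigma_1)| \le \tfrac{2(k-2)l}{3}$ and $|\supp(\sigma_2)| \le \tfrac{4l}{3}$. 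The inductive hypothesis then writes $\sigma_1$ as a product of $k-2$ $l$-cycles supported in a chosen subset of size $\tfrac{2(k-2)l}{3}$, and \Cref{n(2,l)_bounds} writes $\sigma_2$ as a product of $2$ $l$-cycles in the complementary subset of size $\tfrac{4l}{3}$ (valid since $\tfrac{4l}{3} \le n(2,l)$). Concatenating gives a factorization of $\sigma$ as $k$ many $l$-cycles in $S_n$; because $l$ is odd every $l$-cycle is even, so the product automatically lies in $A_n$.

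The main obstacle is the splitting step. Given the disjoint cycle decomposition of $\sigma$, one must select a union $U$ of entire $\sigma$-cycles (possibly padded by fixed points of $\sigma$) with $|U| \le \tfrac{4l}{3}$ and $|U^c| \le \tfrac{2(k-2)l}{3}$, such that both $\sigma|_U$ and $\sigma|_{U^c}$ are even. If the most natural candidate for $U$ forces $\sigma|_U$ to be odd, one must either swap a cycle of $\sigma$ across the partition or rearrange fixed points to flip the parity without violating the size constraints. Here the hypothesis $l \ge 9$, combined with the slack afforded by the inequalities (rather than equalities) on the two support sizes, is essential to guarantee that a valid split always exists. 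Verifying this in every cycle-type configuration of $\sigma$ --- particularly when $\sigma$ has very few fixed points or consists of a single very long cycle --- is the principal technical work; once done, the induction closes and the theorem follows.
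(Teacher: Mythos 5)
This statement is quoted in the paper as Theorem 3.4 of \cite{hgl} and is used as a known input; the paper itself contains no proof of it, so there is nothing internal to compare against. On its own merits, your upper-bound argument is correct and complete: since $3\mid l$ we have $\delta=0$ and $n_1=\frac{2kl}{3}$, so every case of Theorem~\ref{general_upper_bound_n(k,l)} gives $n(k,l)\leq n_1+1$; and for $k$ odd, writing $l=3m$ with $m$ odd forces $n_1=2km\equiv 2\pmod 4$, so case (3) (applicable because $l\geq 9>3$ and $\delta=0$) sharpens this to $n(k,l)\leq n_1$. The base cases $k=2,3,4$ of the lower bound are also correctly read off from Theorems~\ref{n(2,l)_bounds}, \ref{n(3,l)_bounds} and \ref{n(4,l)_bounds}.

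The inductive step of the lower bound, however, has a genuine gap. You propose to split $\sigma\in A_n$, $n=\frac{2kl}{3}$, into two disjoint even permutations whose supports are unions of entire cycles of $\sigma$ (padded by fixed points) of sizes at most $\frac{2(k-2)l}{3}$ and $\frac{4l}{3}$. Such a split need not exist when $dcd*(\sigma)$ has too few cycles: if $\sigma$ is a single cycle of length $n-1=2km-1$ (an odd number, so $\sigma\in A_n$), the only unions of entire cycles are $\emptyset$ and $\supp(\sigma)$, and $n-1>\frac{2(k-2)l}{3}=2(k-2)m$ while also $n-1>\frac{4l}{3}=4m$ for $k\geq 5$, so neither piece fits either budget; no swapping of cycles or redistribution of fixed points can repair this. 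This is not residual technical work but a case demanding a different mechanism. The remedy --- and the one this paper itself uses in the analogous Propositions~\ref{bounded_part_partitions_l=2(mod 3)} and \ref{bounded_part_partitions_l=1(mod 3)} --- is a dichotomy on $n_{\sigma}$: when $n_{\sigma}$ is small, write $\sigma=C_1C_2$ as a product of two long, \emph{non-disjoint} cycles of lengths $l+(k_i-1)(l-1)$ with $k_1+k_2=k$ using Theorem~\ref{product_of_two_cycles}, and decompose each $C_i$ into $k_i$ many $l$-cycles by Lemma~\ref{cycles_as_products_of_cycles}; only when $n_{\sigma}$ is large (so that estimates in the spirit of \Cref{cycle_bound_inequality} guarantee many short cycles) does one perform the disjoint splitting you describe. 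Your induction would close once the small-$n_{\sigma}$ branch is added.
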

	
	In the view of the above two theorems they proposed Conjecture~\ref{mainconjecture} (see introduction). Furthermore, they conjectured that $n(k,l)=\frac{2}{3}kl+1$ when $k$ is even and $3\mid l$ (see Conjecture 1.2, \cite{hgl}). We proved it in the affirmative in \cite{kkm}. The main results of \cite{kkm} are as follows:
	
	\begin{theorem}[Theorem A, \cite{kkm}]\label{n(k,3)_exact_value}
		Suppose $k\geq 2$ be any natural number. Then $n(k,3)=2k+1$.
	\end{theorem}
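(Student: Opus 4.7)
The plan is to prove $n(k,3)=2k+1$ by induction on $k\ge 2$, with base case $k=2$ coming from Bertram's theorem (\Cref{n(2,l)_bounds}), which gives $n(2,3)=\lfloor 8/3\rfloor+1=5=2\cdot 2+1$. The upper bound $n(k,3)\le 2k+1$ is immediate from \Cref{general_upper_bound_n(k,l)} applied with $l=3$: one has $n_1=2k$ and $\delta=0$, and whether $k$ is even (case (1)) or odd (case (3), whose strengthened bound requires $l>3$ and so does not apply), the theorem yields only $n(k,3)\le n_1+1=2k+1$.

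For the lower bound I would fix $k\ge 3$, assume the induction hypothesis $n(k-1,3)=2k-1$, and show that every $\sigma\in A_{2k+1}$ is a product of $k$ three-cycles by splitting on cycle structure. If $\sigma$ has a cycle of length $m\ge 4$, say $(a_1,a_2,\ldots,a_m)$, the identity $(a_1,\ldots,a_m)=(a_1,a_2,a_3)(a_3,a_4,\ldots,a_m)$ lets me peel off one $3$-cycle: writing $\sigma=(a_1,a_2,a_3)\sigma'$, the new $\sigma'$ has the $m$-cycle replaced by an $(m-2)$-cycle, so $m_{\sigma'}=m_\sigma-2\le 2k-1$. Viewing $\sigma'$ as an element of $A_{2k-1}$, the induction hypothesis expresses it as a product of $k-1$ three-cycles, and multiplying by $(a_1,a_2,a_3)$ completes this case.

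In the remaining case every cycle of $\sigma$ has length at most $3$, so $\sigma$ is a product of $n_3$ disjoint $3$-cycles and $n_2$ disjoint transpositions with $n_2$ even (since $\sigma$ is even). The identity $(a,b)(c,d)=(a,b,c)(b,c,d)$ converts each pair of transpositions into two $3$-cycles, exhibiting $\sigma$ as a product of $r:=n_3+n_2$ three-cycles, and the bound $3n_3+2n_2\le 2k+1$ quickly yields $r\le k$ (separating the cases $n_3=0$ and $n_3\ge 1$). The main obstacle I foresee is the final book-keeping needed to pad $r$ three-cycles up to exactly $k$. Two moves suffice: inserting a pair $cc^{-1}$ adds $2$, while replacing a single $3$-cycle $c$ by $c^{-1}\cdot c^{-1}$ (valid since $c^3=1$) adds $1$; together these yield any count at least $r$. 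The one degenerate situation is $\sigma=1$ with $k$ odd, where I would begin from the representation $ccc=1$ for a $3$-cycle $c$ and then pad by $(k-3)/2$ copies of $dd^{-1}$. All these operations occur inside $S_{2k+1}$, which with $2k+1\ge 5$ provides ample room for the auxiliary $3$-cycles.
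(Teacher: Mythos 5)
This statement is quoted verbatim as Theorem~A of \cite{kkm}; the present paper gives no proof of it, so there is no internal argument to compare yours against. Judged on its own, your proof is correct and complete. The upper bound via \Cref{general_upper_bound_n(k,l)} is applied correctly (with $l=3$ one gets $n_1=2k$, and since the sharpened clause of case (3) requires $l>3$, only $n(k,3)\leq 2k+1$ follows, which is all you need); alternatively, the same bound drops out of \Cref{general_product_condition} applied to a product of $k+1$ disjoint transpositions in $A_{2k+2}$. For the lower bound, the two identities $(a_1,\ldots,a_m)=(a_1,a_2,a_3)(a_3,a_4,\ldots,a_m)$ and $(a\,b)(c\,d)=(a,b,c)(b,c,d)$ are both valid under the paper's right-to-left convention, the peeling step correctly reduces $m_{\sigma}$ by $2$ so that the inductive hypothesis on $A_{2k-1}$ applies, the parity argument giving $n_2$ even is sound ($m_\sigma+n_\sigma=4n_3+3n_2$ even forces $2\mid n_2$), and the count $r=n_3+n_2\leq k$ checks out in both subcases. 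The padding bookkeeping, including the degenerate case $\sigma=1$ with $k$ odd, is handled correctly; note that when $k-r$ is odd one has $r\leq k-1$, so the ``replace $c$ by $c^{-1}c^{-1}$'' move is always available when needed. This is a clean, elementary, self-contained argument for a result the paper merely imports.
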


	\begin{theorem}[Theorem B, \cite{kkm}]\label{n(k,l)_exact_value_k_even_l_divisible_by_3}
		Let $k\geq 2$ and $l>3$ be two natural numbers such that $k$ is even and $3\mid l$. Then $n(k,l)=\frac{2}{3}kl+1$.
	\end{theorem}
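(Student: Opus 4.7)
The upper bound is immediate from \Cref{general_upper_bound_n(k,l)}. Writing $k=2m$ and $l=3l'$, the quantity $n_1=\frac{2kl}{3}=4ml'$ is divisible by $4$, so case (1) of that theorem yields $n(k,l)\leq n_1+1=\frac{2kl}{3}+1$.

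For the lower bound I would proceed by induction on $k$ in steps of $2$. The base case $k=2$ is \Cref{n(2,l)_bounds}, which under the hypothesis $3\mid l$ gives $n(2,l)=\lfloor\frac{4l}{3}\rfloor+1=\frac{4l}{3}+1$, exactly the target value. For the inductive step with $k\geq 4$ even, fix $n=\frac{2kl}{3}+1$ and a target $\sigma\in A_n$. The aim is to factor $\sigma$ as a product of $k$ many $l$-cycles by peeling off two $l$-cycles at a time. Setting $n_1=\frac{4l}{3}+1$ and $n_2=\frac{2(k-2)l}{3}+1$, one computes $n_1+n_2=n+1$, so we cannot split $\{1,\ldots,n\}$ into two disjoint subsets of these sizes: a single overlap point is forced.

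The engine of the argument is a splitting lemma: produce even permutations $\alpha,\beta\in A_n$ with $\sigma=\alpha\beta$ and disjoint supports $\supp(\alpha)\subseteq T_1$, $\supp(\beta)\subseteq T_2$, where $|T_i|\leq n_i$ and $|T_1\cap T_2|\leq 1$, with a designated overlap point $x\in T_1\cap T_2$ through which an $l$-cycle on each side can be made to pass. Once this is in hand, the base case on $T_1$ realizes $\alpha=c_1c_2$ as a product of two $l$-cycles and the inductive hypothesis applied on $T_2$ realizes $\beta=c_3\cdots c_k$ as a product of $k-2$ $l$-cycles; the concatenation $c_1c_2c_3\cdots c_k$ gives the desired factorization of $\sigma$.

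The main obstacle is the splitting lemma itself. One naturally partitions $dcd*(\sigma)$ into two classes to form $\alpha$ and $\beta$, but several delicate cases intervene. (i)~When $\sigma$ is a single long cycle (necessarily of odd length), no nontrivial partition of its cycles exists, and the split must be effected via the identity $(a_1\,a_2\cdots a_r)=(a_1\,a_2\cdots a_j)(a_j\,a_{j+1}\cdots a_r)$, which automatically creates the required overlap point $a_j$. Analogous cycle identities are needed whenever $\sigma$ has too few cycles to partition. (ii)~When the natural partition of $dcd*(\sigma)$ produces pieces of the wrong parity, one corrects by shifting a short cycle across the partition, or by inserting a compensating $3$-cycle supported at $T_1\cap T_2$, absorbing the parity defect. (iii)~One must finally arrange that the $l$-cycle factorizations chosen on $T_1$ and $T_2$ share the overlap point $x$, which uses the flexibility in the statements of \Cref{n(2,l)_bounds} and the inductive hypothesis (the realized $l$-cycles can be chosen to pass through any prescribed point). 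Once these cycle-bookkeeping cases are exhausted and the size estimates $|T_i|\leq n_i$ are checked, the reduction closes and the theorem follows.
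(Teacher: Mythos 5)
This theorem is not proved in the present paper: it is quoted from the authors' earlier work \cite{kkm} (Theorem B there), so there is no internal proof to compare against line by line. Your upper bound is fine: with $k$ even and $3\mid l$ one has $n_1=\frac{2kl}{3}\equiv 0\pmod 4$, and \Cref{general_upper_bound_n(k,l)}(1) gives $n(k,l)\le \frac{2kl}{3}+1$. Your lower-bound skeleton (induction on $k$ in steps of $2$, base case from \Cref{n(2,l)_bounds}, reduction via a decomposition $\sigma=\alpha\beta$ with small, essentially disjoint supports) is also the right general architecture --- it is exactly what the paper does for the analogous $3\nmid l$ cases in \Cref{general_decomposibility_l(2mod3)_k_even} and \Cref{n(k,l_lowerbound_k_even_l(2 mod 3)}. (Incidentally, your point (iii) is a non-issue: once $\sigma=\alpha\beta$ with disjoint or one-point-overlapping supports, the concatenation of the two factorizations equals $\sigma$ regardless of whether any $l$-cycle passes through the overlap point.)

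The genuine gap is that your splitting lemma, in the form ``always peel off exactly two $l$-cycles,'' is false. Take $k\equiv 0\pmod 4$, $k\ge 8$, write $l=3l'$ ($l'\ge 2$) and $m=k/2$, and let $\sigma\in A_{n}$, $n=4ml'+1$, be a disjoint product of $m$ cycles each of length $4l'=\frac{4l}{3}$; then $m_\sigma+n_\sigma=m(4l'+1)$ is even, so $\sigma\in A_n$. Any factorization $\sigma=\alpha\beta$ with $|\supp(\alpha)|\le n_1=\frac{4l}{3}+1$, $|\supp(\beta)|\le n_2=\frac{2(k-2)l}{3}+1$ and supports meeting in at most one point must distribute the cycles of $\sigma$ between the two factors, cutting at most one cycle into two arcs at the common point. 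The budget $n_1=4l'+1$ forces $\alpha$ to be either a single whole $4l'$-cycle (an odd permutation, hence not a product of two $l$-cycles) or a proper arc of length $j\le 4l'-1$ of one cycle, in which case $|\supp(\beta)|=n-j\ge n-4l'+1=n_2+1$ exceeds the budget. So no admissible split exists, and your induction cannot close on this $\sigma$. Two further tools are needed, both visible in the paper's treatment of the $3\nmid l$ cases: one must sometimes peel off \emph{four} $l$-cycles instead of two, using the known value of $n(4,l)$ (this is why \Cref{general_decomposibility_l(2mod3)_k_even} offers the alternative $\rho\in A_{\frac{8l-1}{3}-\epsilon}$, and it disposes of the example above, since two of the $\frac{4l}{3}$-cycles together form an even permutation on $\frac{8l}{3}\le\frac{8l}{3}+1$ points); and permutations with few, long cycles are handled by an entirely different mechanism --- writing $\sigma$ as a product of two long \emph{overlapping} cycles via \Cref{product_of_two_cycles} and then applying \Cref{cycles_as_products_of_cycles}, as in \Cref{bounded_part_partitions_l=2(mod 3)} --- rather than by any support-splitting identity. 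Without building in at least one of these escape routes, the reduction does not go through.
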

	
	Now we proceed to state and prove one of the main result of this section (\Cref{indecomposable}), which will enable us to give improved upper bounds for $n(k,l)$ when $k\geq 5$ and $3\nmid l$, thereby answering Conjecture~\ref{mainconjecture} in the negative. We start with an easy observation about even permutations.
	
	\begin{observation}\label{even_permutation_characterization}
		Let $\sigma\in S_n$. Then $\sigma \in A_n$ if and only if $m_{\sigma}+n_{\sigma}$ is even.
	\end{observation}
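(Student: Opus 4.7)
The plan is to reduce the sign of $\sigma$ to an explicit count of transpositions obtained from its disjoint cycle decomposition. Recall that a cycle of length $l$ can be written as a product of $l-1$ transpositions, for instance via the identity
\[
(a_1\, a_2\, \ldots\, a_l) = (a_1\, a_l)(a_1\, a_{l-1})\cdots (a_1\, a_2).
\]
First, I would write $dcd*(\sigma) = c_1 c_2 \cdots c_{n_\sigma}$ where $c_i$ is a cycle of length $\mathfrak{l}(c_i) \geq 2$. Since the cycles $c_i$ are pairwise disjoint, the lengths satisfy $\sum_{i=1}^{n_\sigma} \mathfrak{l}(c_i) = m_\sigma$, using the elementary identity $\sum_i i\, n_i(\sigma) = m_\sigma$ noted in the notations subsection.

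Next, applying the above identity to each $c_i$, I would express $\sigma$ as a product of
\[
\sum_{i=1}^{n_\sigma} \bigl(\mathfrak{l}(c_i) - 1\bigr) \;=\; m_\sigma - n_\sigma
\]
transpositions in $S_n$. Since $\sigma \in A_n$ if and only if $\sigma$ can be written as a product of an even number of transpositions (equivalently, the sign $\operatorname{sgn}(\sigma) = (-1)^{m_\sigma - n_\sigma}$ equals $+1$), we conclude that $\sigma \in A_n$ if and only if $m_\sigma - n_\sigma$ is even, which is the same as $m_\sigma + n_\sigma$ being even.

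There is really no obstacle here; this observation is immediate from the standard factorization of cycles into transpositions together with the well-definedness of the sign character on $S_n$. The only point worth noting is that the sum is taken over cycles of length at least $2$ (fixed points contribute $0$ to both $m_\sigma$ and $n_\sigma$), so the convention in the definition of $dcd*(\sigma)$ does not affect the parity computation.
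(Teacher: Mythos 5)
Your proof is correct and is exactly the standard argument intended here: the paper states this as an unproved observation, and the factorization of each cycle of length $l$ into $l-1$ transpositions giving $\operatorname{sgn}(\sigma)=(-1)^{m_\sigma-n_\sigma}$ is the canonical justification. Your remark about fixed points contributing nothing is a nice touch but, as you note, immaterial to the parity.
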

	
	\medskip
	
	\noindent \textbf{Notation:} Let $\p(k,l;n)$ denote the set of all $\sigma \in S_n$ such that $\sigma$ can be written as a product of $k$ many $l$-cycles in $S_n$.
	
	\medskip
	
	Note that if $k$ is even, or $l$ is odd then $\p(k,l;n)\subseteq A_n$. Otherwise, $\p(k,l;n)\subseteq S_n\setminus A_n$. The following is an easy observation.
	
	\begin{observation}\label{increasing_cycles}
		Let $k,l\in \mathbb{N}$ and $l\geq 2$. If $l$ is odd then $\p(k,l;n)\subseteq \p(k+1,l;n)$. If $l$ is even then $\p(k,l;n)\subseteq \p(k+2,l;n)$.
	\end{observation}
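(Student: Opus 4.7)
The plan is to prove both inclusions by exhibiting an explicit rewriting of a product of $k$ many $l$-cycles as a product of $k+1$ (respectively $k+2$) $l$-cycles. So fix $\sigma\in\p(k,l;n)$ and write $\sigma = c_1 c_2 \cdots c_k$ where each $c_i$ is an $l$-cycle in $S_n$. The two cases should be handled separately because when $l$ is even every $l$-cycle is an odd permutation, so the parity of the number of $l$-cycles used to express $\sigma$ is determined by $\sigma$; in particular one cannot in general gain a single extra factor.

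For the even case, the argument is immediate. Pick any $l$-cycle $d$ in $S_n$; then $d^{-1}$ is also an $l$-cycle, and
\[
\sigma \;=\; d \cdot d^{-1} \cdot c_1 c_2 \cdots c_k,
\]
exhibits $\sigma$ as a product of $k+2$ many $l$-cycles. Hence $\p(k,l;n)\subseteq \p(k+2,l;n)$.

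For the odd case, the key observation is that if $c$ is an $l$-cycle and $l$ is odd then $c^2$ is again an $l$-cycle, since the cycle type of $c^m$ depends only on $\gcd(m,l)$, and $\gcd(2,l)=1$ in this case. Thus $c_1 = c_1^2 \cdot c_1^{-1}$ is a product of two $l$-cycles, and substituting gives
\[
\sigma \;=\; c_1^2\cdot c_1^{-1}\cdot c_2\cdots c_k,
\]
a product of $k+1$ many $l$-cycles, so $\p(k,l;n)\subseteq \p(k+1,l;n)$.

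There is no real obstacle here; the only step requiring a small verification is that a power $c^m$ of an $l$-cycle is itself an $l$-cycle whenever $\gcd(m,l)=1$, which is a standard fact about cycle structures.
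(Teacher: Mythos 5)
Your proof is correct and is exactly the easy argument the paper has in mind (the observation is stated without proof there): for $l$ even insert $d\,d^{-1}$ for an $l$-cycle $d$, and for $l$ odd use that $c^2$ is again an $l$-cycle since $\gcd(2,l)=1$, so $c_1=c_1^2\cdot c_1^{-1}$ splits one factor into two.
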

	
	\noindent We will use \Cref{even_permutation_characterization} and \Cref{increasing_cycles} throughout the article without any further mention.
	
	Let $\sigma \in S_n$. Then if $\sigma \in \p(k,l;n)$, $\sigma$ must satisfy the following necessary condition.  
	
	\begin{lemma}[Corollary 3.2, \cite{hgl}]\label{general_product_condition}
		Let $\sigma \in S_n$ and $\sigma=C_1C_2\cdots C_k$ where $C_i$ is a cycle of length $l$ for every $1\leq i\leq k$. Suppose that there does not exist a cycle of length $l$ in $dcd*(\sigma)$. Then $m_{\sigma}+n_{\sigma}\leq kl$.
	\end{lemma}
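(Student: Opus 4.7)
My plan is to prove the stronger inequality $|T| + n_\sigma \leq kl$, where $T := \bigcup_{i=1}^{k} \mathrm{Supp}(C_i)$. Since $\mathrm{Supp}(\sigma) \subseteq T$, we have $m_\sigma \leq |T|$, so $m_\sigma + n_\sigma \leq |T| + n_\sigma \leq kl$ follows immediately. For each $x \in T$ define $d(x) := \#\{i : x \in \mathrm{Supp}(C_i)\}$ and let $M := \{x \in T : d(x) \geq 2\}$ be the set of ``multi-incidence'' points. Double counting gives $\sum_{x \in T} d(x) = kl$, hence $kl - |T| = \sum_{x \in T}(d(x) - 1) \geq |M|$. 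It therefore suffices to establish $n_\sigma \leq |M|$.

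The heart of the argument is the following structural claim: \emph{every non-trivial cycle $D$ of $\sigma$ contains at least one point of $M$ in its support}. Granting this, since distinct non-trivial cycles of $\sigma$ have pairwise disjoint supports, we obtain an injection from the set of non-trivial cycles of $\sigma$ into $M$ by assigning each cycle $D$ to some multi-incidence point in $\mathrm{Supp}(D)$. Thus $n_\sigma \leq |M| \leq kl - |T|$, which completes the argument.

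To prove the structural claim, I argue by contradiction: assume some non-trivial cycle $D$ of $\sigma$ satisfies $\mathrm{Supp}(D) \cap M = \emptyset$, so every $x \in \mathrm{Supp}(D)$ lies in a unique $C_{i(x)}$. Evaluate $\sigma(x) = (C_1 C_2 \cdots C_k)(x)$ from right to left: the factors $C_j$ with $j > i(x)$ all fix $x$, so the first non-trivial step produces $z := C_{i(x)}(x) \in \mathrm{Supp}(C_{i(x)})$. If $z$ is a single point, then none of $C_1, \ldots, C_{i(x)-1}$ moves it, so $\sigma(x) = C_{i(x)}(x)$. Iterating this identity along the $\sigma$-orbit of $x$, one obtains $\sigma^j(x) = C_{i(x)}^j(x)$ for all $j$, so the $\sigma$-orbit of $x$, which is $\mathrm{Supp}(D)$, coincides with the $C_{i(x)}$-orbit of $x$, which is $\mathrm{Supp}(C_{i(x)})$. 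This forces $D = C_{i(x)}$, an $l$-cycle of $\sigma$, contradicting the hypothesis.

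The main obstacle is handling the ``detour'' case where $z = C_{i(x)}(x)$ itself lies in $M$, so that the trajectory continues through further multi-incidence points before reaching $\sigma(x)$. The rescue is that $\sigma(x) = D(x) \in \mathrm{Supp}(D) \subseteq T \setminus M$ must land on a single point, which severely constrains the detour. This can be closed either by a careful trajectory analysis showing every such detour is incompatible with the single landing point, or, more cleanly, by induction on $k$: the base case $k = 2$ is immediate since $\pi := C_1 \cdots C_{i(x)-1}$ is either the identity (forcing $\sigma(x) = z \in M$, contradicting $\sigma(x)$ being single) or the single cycle $C_1$ (forcing $y = \sigma(x) \in \mathrm{Supp}(C_1) \cap \mathrm{Supp}(C_2) \subseteq M$, again a contradiction), and the inductive step handles larger $k$ by peeling off the last cycle $C_k$ and analyzing how it affects the incidence pattern on $T$.
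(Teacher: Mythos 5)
Your reduction is attractive and genuinely different from the source: the paper does not prove this lemma at all (it is quoted from \cite{hgl}, where the argument runs through Ree's inequality, cf.\ \Cref{theorem_of_Ree}, applied to the orbits of $\langle C_1,\dots,C_k\rangle$), whereas you propose a purely elementary double count. The stronger inequality $|T|+n_{\sigma}\leq kl$ is in fact true, the identity $kl-|T|=\sum_{x\in T}(d(x)-1)\geq |M|$ is fine, and the structural claim that every cycle of $dcd*(\sigma)$ meets $M$ is correct. The problem is that your proof of that structural claim --- which you yourself identify as the heart of the matter --- does not close. The ``detour'' case is exactly where the content lies, and the local argument you offer for it is false. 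Concretely, take $k=2$, $l=3$, $C_1=(3\,4\,5)$, $C_2=(1\,2\,3)$ and $x=2$: then $d(x)=1$, $i(x)=2$, $z=C_2(2)=3\in M$, yet $y=\sigma(2)=C_1(3)=4$ lies in $\supp(C_1)\setminus\supp(C_2)$, so $y\notin M$. Thus your claimed base-case dichotomy (``either $\sigma(x)=z\in M$ or $y\in\supp(C_1)\cap\supp(C_2)$'') fails: $C_1$ can carry a point of $M$ back out of $M$, and no contradiction arises from a single $x$. The alternative route by induction on $k$ is only named, not carried out, and is doubtful as stated: removing $C_k$ changes $\sigma$, changes $M$, and may create $l$-cycles in $dcd*(C_1\cdots C_{k-1})$, so the hypothesis of the claim need not persist.

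The claim must instead be proved globally around the cycle $D$. For $x\in\supp(D)$ with $d(x)=1$ let $i(x)$ be the unique index with $x\in\supp(C_{i(x)})$. Reading the product from right to left, the first factor moving $x$ is $C_{i(x)}$, while (since $d(\sigma(x))=1$) the last factor moving the running point must be $C_{i(\sigma(x))}$; hence $i(\sigma(x))\leq i(x)$, with equality forcing $\sigma(x)=C_{i(x)}(x)$. If $\supp(D)\cap M=\emptyset$, iterating around $D$ returns to $x$, so $i$ is constant, say equal to $i_0$, on $\supp(D)$ and $D$ agrees with $C_{i_0}$ there; then $\supp(D)$ is a nonempty $C_{i_0}$-invariant subset of the single orbit $\supp(C_{i_0})$, whence $D=C_{i_0}$ is an $l$-cycle of $\sigma$, contradicting the hypothesis. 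With this monotonicity argument substituted for your detour analysis, your proof is complete and arguably more elementary than the Ree-based one; as written, however, the key step is unproven and its sketched justification is incorrect.
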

	 The following is a well-known theorem of Ree (see \cite{re}). A combinatorial proof of the result can be found in \cite{fls} (see also Theorem 4, \cite{ash}). The statement that follows is taken from \cite{bh}.
	
	\begin{theorem}[Theorem 4, \cite{bh}]\label{theorem_of_Ree}
		Let $D_i\in S_n$ for $1\leq i\leq k$ and $D_1D_2\cdots D_k=1$. Suppose $H=\langle D_1,\ldots, D_k\rangle$. If $T$ denotes the number of orbits of the natural action of $H$ on $\{1,2,\ldots,n\}$, then 
		$$\sum_{i=1}^{k}(m_{D_i}-n_{D_i})\geq 2(n-T).$$
	\end{theorem}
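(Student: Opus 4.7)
My plan is to reduce the statement to a claim about products of transpositions and then to track two combinatorial invariants simultaneously along the word.

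First, factor each $D_i$ into a minimum-length product of transpositions. Since every $l$-cycle $(a_1\,a_2\,\ldots\,a_l)$ equals $(a_1\,a_2)(a_2\,a_3)\cdots(a_{l-1}\,a_l)$, and a general permutation is a disjoint product of cycles, I may write $D_i=\tau_{i,1}\tau_{i,2}\cdots\tau_{i,e_i}$ with $e_i=m_{D_i}-n_{D_i}$, where each $\tau_{i,j}$ is supported inside a single cycle of $D_i$. Concatenating, $\tau_1\tau_2\cdots\tau_N=1$ with $N=\sum_{i=1}^{k}(m_{D_i}-n_{D_i})$. Viewing each transposition as an edge of a graph $G$ on $\{1,\ldots,n\}$, a standard argument shows that the connected components of $G$ are exactly the orbits of $H$: every pair of points within a common cycle of some $D_i$ is joined by $\tau$-edges, and conversely every $\tau$-edge joins two points in a common $H$-orbit. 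In particular $G$ has $T$ components, and it therefore suffices to prove $N\geq 2(n-T)$ under the sole hypothesis that $\tau_1\cdots\tau_N=1$ is a factorization of the identity into transpositions whose edge graph has $T$ connected components.

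For $0\leq j\leq N$, set $P_j:=\tau_1\tau_2\cdots\tau_j$, so $P_0=P_N=1$, and let $c(P_j)$ denote the total number of cycles of $P_j$ counting fixed points. If $\tau_j=(a,b)$, standard cycle arithmetic gives $c(P_j)=c(P_{j-1})+1$ when $a$ and $b$ lie in a common cycle of $P_{j-1}$ (call this a \emph{split step}) and $c(P_j)=c(P_{j-1})-1$ otherwise (a \emph{merge step}). Writing $A$ and $B$ for the numbers of split and merge steps, $A+B=N$ and $A-B=c(P_N)-c(P_0)=0$, so $N=2B$.

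Finally, let $C_j$ be the number of connected components of the partial graph $G_j$ with edges $\tau_1,\ldots,\tau_j$ (including isolated vertices). Then $C_0=n$, $C_N=T$, and each step either leaves $C_j$ unchanged or decreases it by one (the latter exactly when the new edge joins two distinct components). The crucial observation is that every split step leaves $C_j$ unchanged: if $a$ and $b$ lie in the same cycle of $P_{j-1}$, then they lie in the same orbit of $\langle P_{j-1}\rangle\subseteq\langle\tau_1,\ldots,\tau_{j-1}\rangle$, and orbits of the latter group coincide with connected components of $G_{j-1}$, so the edge $(a,b)$ is already internal to a single component. Hence every component-decreasing step is a merge, giving $n-T=C_0-C_N\leq B$, and consequently $N=2B\geq 2(n-T)$. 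The only delicate point is the chain of containments ``cycle of $P_{j-1}\subseteq$ orbit of $\langle\tau_1,\ldots,\tau_{j-1}\rangle=$ component of $G_{j-1}$''; this is precisely what prevents split steps from decreasing the component count and produces the factor of $2$ on the right-hand side.
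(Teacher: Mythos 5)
Your proof is correct. Note that the paper does not actually prove this statement: it is quoted as Theorem 4 of \cite{bh}, attributed to Ree \cite{re}, with a pointer to a combinatorial proof in \cite{fls} (see also \cite{ash}). What you have written is a complete, self-contained version of exactly that standard combinatorial argument, and all of its delicate points check out. The reduction to transpositions is sound because the minimal transposition length of $D_i$ is $m_{D_i}-n_{D_i}$ and each factor $(a\,b)$ joins two points of a common cycle of $D_i$, so the orbits of $\langle D_1,\ldots,D_k\rangle$ and of the group generated by all the transpositions coincide (both containments are needed and both hold), giving $C_N=T$. The parity bookkeeping $A-B=c(P_N)-c(P_0)=0$, hence $N=2B$, is correct since $P_0=P_N=1$ has $n$ cycles counting fixed points. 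The key step --- that a split step cannot decrease the component count because a cycle of $P_{j-1}$ is contained in an orbit of $\langle\tau_1,\ldots,\tau_{j-1}\rangle$, which is a component of $G_{j-1}$ --- is exactly the right observation and is where the factor $2$ comes from; its contrapositive shows every component-decreasing step is a merge, so $n-T\leq B$ and $\sum_i(m_{D_i}-n_{D_i})=N=2B\geq 2(n-T)$. Since the paper leans on this inequality (via \Cref{ree_corollary} and \Cref{indecomposable}) for all of its upper-bound constructions, having this argument spelled out is a genuine addition rather than a redundancy.
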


	\noindent As a corollary of the above the result, we get the following:
	\begin{corollary}\label{ree_corollary}
		Suppose $\sigma \in S_n$ and $\sigma=C_1C_2\cdots C_k$ where $C_i$ is a cycle of length $l$ for every $1\leq i\leq k$ and $\displaystyle \cup_{i=1}^{k} \supp(C_i)=\supp(\sigma)$. If $T$ is the number of orbits of the natural action of $H=\langle C_1,\ldots, C_k\rangle$ on $\{1,2,\ldots, m_{\sigma}\}$ then
		$$kl-m_{\sigma}-n_{\sigma}\geq k-2T.$$
	\end{corollary}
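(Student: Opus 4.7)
The plan is to deduce the corollary from Ree's theorem (\Cref{theorem_of_Ree}) by supplementing the given $k$-fold product with the single factor $\sigma^{-1}$, so that the product of the resulting $k+1$ permutations is the identity. More precisely, I would set $D_i = C_i$ for $1 \le i \le k$ and $D_{k+1} = \sigma^{-1}$, and apply Ree's theorem to this collection.

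First I would note that, since $\bigcup_{i=1}^k \supp(C_i) = \supp(\sigma)$, all of $C_1, \ldots, C_k$ and $\sigma^{-1}$ fix every point outside $\supp(\sigma)$, so we may regard them as elements of the symmetric group on the $m_\sigma$ symbols in $\supp(\sigma)$. On this set, the group generated by $D_1, \ldots, D_{k+1}$ coincides with $H = \langle C_1, \ldots, C_k\rangle$ (because $\sigma$, and hence $\sigma^{-1}$, already lies in $H$), so it has exactly $T$ orbits. The product condition $D_1 D_2 \cdots D_{k+1} = C_1 C_2 \cdots C_k \sigma^{-1} = \sigma \sigma^{-1} = 1$ is automatic.

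Ree's theorem, applied on the ground set $\supp(\sigma)$ of size $m_\sigma$, then yields
\[
\sum_{i=1}^{k}(m_{C_i}-n_{C_i}) + (m_{\sigma^{-1}} - n_{\sigma^{-1}}) \ge 2(m_\sigma - T).
\]
Each $C_i$ is an $l$-cycle, so $m_{C_i} - n_{C_i} = l-1$, and $m_{\sigma^{-1}} = m_\sigma$, $n_{\sigma^{-1}} = n_\sigma$. Substituting gives $k(l-1) + m_\sigma - n_\sigma \ge 2m_\sigma - 2T$, which rearranges to the desired inequality $kl - m_\sigma - n_\sigma \ge k - 2T$.

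There is essentially no obstacle here beyond correctly setting up the domain on which Ree's theorem is applied; the only subtlety is recognizing that the hypothesis $\bigcup \supp(C_i) = \supp(\sigma)$ is exactly what allows us to restrict all the permutations (including $\sigma^{-1}$) to $\supp(\sigma)$ without changing their cycle data, so that $n$ in Ree's bound can be taken to be $m_\sigma$ rather than the ambient $n$.
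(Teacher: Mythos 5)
Your proof is correct and is exactly the argument the paper intends: the paper's own proof of this corollary is the one-line remark that it ``directly follows'' from \Cref{theorem_of_Ree}, and your writeup (appending $D_{k+1}=\sigma^{-1}$, noting $\langle C_1,\ldots,C_k,\sigma^{-1}\rangle=H$ so the orbit count is still $T$, restricting to $\supp(\sigma)$, and substituting $m_{C_i}-n_{C_i}=l-1$) is precisely the standard way to carry out that deduction.
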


	\begin{proof}
		The proof directly follows from \Cref{theorem_of_Ree}.
	\end{proof}
	
	We now define the notion of an indecomposable permutation in our context.
	
	\begin{definition}
		Let $k,l\geq 2$ and $l\leq n$. Suppose that $\sigma \in S_n$ be such that $\sigma\in \p(k,l;n)$. Let $\sigma=\sigma_1\sigma_2\cdots \sigma_r$ be the disjoint cycle decomposition of $\sigma$ where $\lt(\sigma_j)\geq 2$ and $\lt(\sigma_j)\neq l$ for all $j$. For $2\leq k'\leq k-2$, we say $\sigma$ is $(k',k-k')$ decomposable if $\sigma=\rho \tau$ such that $\supp(\rho)\cap \supp(\tau)=\emptyset$ and $dcd*(\sigma)=dcd*(\rho)\cup dcd*(\tau)$, and $\rho=C_1\cdots C_{k'}$, $\tau=C_{k'+1}\cdots C_{k}$ where $C_i$'s are $l$-cycles. We say that $\sigma$ is \textbf{k-decomposable} if $\sigma$ is $(k',k-k')$ decomposable for some $k'$ where $2\leq k'\leq k-2$. Further, when $k$ is even, we say that $\sigma$ is \textbf{even-k-decomposable} if $\sigma$ is $(k',k-k')$ decomposable for some even  $k'$ such that $2\leq k'\leq k-2$. We say that $\sigma$ is \textbf{k-indecomposable} (resp. \textbf{even-k-indecomposable}) if $\sigma$ is not $k$-decomposable (resp. even-$k$-decomposable).
	\end{definition}

	\begin{lemma}\label{indecomposable}
		Let $\sigma \in S_n$ and $\sigma=\sigma_1\cdots \sigma_r$ be the disjoint cycle decomposition of $\sigma$ (excluding the trivial 1-cycles). Suppose $\lt(\sigma_j)\neq l$ for every  $1\leq j \leq r$. Suppose that $k\geq 4$ and $\sigma\in \p(k,l;n)$. Then
		\[kl-m_{\sigma}-n_{\sigma}\geq 
			\begin{cases}
				k-2 & \text{when $\sigma$ is $k$-indecomposable}\\
				k-4 & \text{when $k$ is even and  $\sigma$ is even-$k$-indecomposable.}
			\end{cases}
		\]
	\end{lemma}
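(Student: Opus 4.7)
The plan is to combine Ree's theorem with an analysis of the orbit structure of $H := \langle C_1,\ldots,C_k\rangle$ on $\Omega := \bigcup_{i=1}^k \supp(C_i)$, where $\sigma = C_1 \cdots C_k$ is a fixed expression of $\sigma$ as a product of $l$-cycles. Applying \Cref{theorem_of_Ree} to the relation $C_1\cdots C_k\cdot\sigma^{-1} = 1$ acting on $\Omega$ gives $k(l-1) + (m_\sigma - n_\sigma) \geq 2(|\Omega| - T)$, where $T$ is the number of $H$-orbits on $\Omega$; since $|\Omega| \geq m_\sigma$, this rearranges to
\[
kl - m_\sigma - n_\sigma \geq k - 2T.
\]
Thus the task reduces to showing that $T = 1$ when $\sigma$ is $k$-indecomposable, and $T \leq 2$ when $k$ is even and $\sigma$ is even-$k$-indecomposable.

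Let $O_1,\ldots,O_T$ be the $H$-orbits on $\Omega$. Since each $C_i$ is a single cycle, $\supp(C_i)$ lies entirely inside some $O_j$; write $k_j$ for the number of $C_i$'s supported in $O_j$, so that $k_1+\cdots+k_T = k$. A key observation is that $k_j \geq 2$ for every $j$: otherwise, the unique $C_{i_0}$ supported in $O_j$ would be disjoint from all the other $C_i$'s and would contribute an $l$-cycle to $dcd*(\sigma)$, contradicting the assumption $\lt(\sigma_i)\neq l$. This is precisely where the cycle-length hypothesis enters. Moreover, any two $C_i$'s supported in distinct orbits commute, so for any partition $\{1,\ldots,T\}=A\sqcup B$ into non-empty parts, collecting the corresponding $C_i$'s in their original order produces a disjoint factorisation $\sigma = \rho\tau$ with $dcd*(\sigma)=dcd*(\rho)\cup dcd*(\tau)$, where $\rho$ is a product of $\sum_{j\in A}k_j$ many $l$-cycles and $\tau$ is a product of $\sum_{j\in B}k_j$ many, both numbers lying in $[2,k-2]$ since $k_j\geq 2$. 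Hence $T\geq 2$ already forces $\sigma$ to be $k$-decomposable, settling the first case.

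For the even case, assume $k$ is even and $T\geq 3$; I will produce an even-$k$-decomposition. If some $k_{j_0}$ is even, take $A=\{j_0\}$: then $k_{j_0}$ is even with $2 \leq k_{j_0} \leq k-2(T-1) \leq k-4$. If every $k_j$ is odd, then since $k=\sum k_j$ is even, $T$ itself must be even, so $T\geq 4$; choosing $A$ to be any two orbits, $\sum_A k_j\geq 4$ is even (sum of two odd numbers) and $\sum_B k_j \geq 2(T-2)\geq 4$. Either way $\sigma$ is even-$k$-decomposable, so even-$k$-indecomposability forces $T\leq 2$, yielding $kl-m_\sigma-n_\sigma\geq k-4$.

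The main subtlety, and the reason the two cases give different bounds, is the residual case $T=2$ with $k_1,k_2$ both odd: here $\sigma$ is genuinely $(k_1,k_2)$-decomposable but in no even way, which is exactly why the even-indecomposable bound is $k-4$ rather than $k-2$.
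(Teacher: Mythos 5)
Your proof is correct and follows essentially the same route as the paper: both reduce the claim via Ree's theorem (\Cref{ree_corollary}) to bounding the number $T$ of orbits of $\langle C_1,\ldots,C_k\rangle$, and both deduce $T=1$ (resp.\ $T\leq 2$) from the fact that the orbits partition the cycles $C_i$ into disjoint blocks, each block having at least two cycles because no $l$-cycle occurs in $dcd*(\sigma)$. Your bookkeeping via the counts $k_j$ and the explicit parity argument in the even case is a cleaner packaging of the paper's iterative chaining argument, but the underlying ideas coincide.
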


	\begin{proof}
		Without loss of generality we can assume $n=m_{\sigma}$. Let $\sigma=C_1\cdots C_k$ where each $C_i$ is a $l$-cycle. Let $H=\langle C_1,\ldots,C_k\rangle$ and consider the natural action of $H$ on $\{1,2,\ldots, m_{\sigma}\}$. Let $T$ denote the number of orbits of this action. Let us consider a cycle $C_{i_1}$ where $1\leq i_1\leq k$. Let $a\in \supp(C_{i_1})$. Let $[a]$ denote the orbit of $a$. It is clear that $\supp(C_{i_1})\subset [a]$. Since $\lt(\sigma_j)\neq l$ for every $1\leq j\leq r$, we conclude that there exists $C_{i_2}$ for some $i_2\neq i_1$ such that $\supp(C_{i_1})\cap \supp(C_{i_2})\neq \emptyset$. This immediately implies that $\supp(C_{i_2})\subset [a]$. Suppose that  for every $1\leq i_3\leq k$ such  that $i_3\neq i_1,i_2$ we have $(\supp(C_{i_1})\cup \supp(C_{i_2}))\cap \supp(C_{i_3})=\emptyset$. It follows that $\sigma$ is $(2,k-2)$ decomposable, contrary to our assumption in both cases. Thus there exists $i_3\neq i_1,i_2$ such that $(\supp(C_{i_1})\cup \supp(C_{i_2}))\cap \supp(C_{i_3})\neq \emptyset$, whence it follows that $\supp(C_{i_3})\subset [a]$.
		
		\medskip
		
		\textbf{\underline{Case I}}: Suppose now that $\sigma$ is $k$-indecomposable. Proceeding with the above argument we get that $\cup_{j=1}^{k-1} \supp(C_{i_j})\subset [a]$ for some subset $\{i_1,i_2,\ldots,i_{k-1}\}\subset \{1,\ldots,k\}$. Finally we are left with a single cycle $C_{i_k}$ and since $\lt(\sigma_i)\neq l$ for every $1\leq i\leq r$ we conclude that $\cup_{j=1}^{k-1} \supp(C_{i_j})\cap C_{i_k}\neq \emptyset$. This implies that $\supp(C_{i_k})\subset [a]$, thereby proving that the action is transitive. Thus $T=1$. Applying this to Corollary~\ref{ree_corollary}, we get our desired result.
		
		\medskip
		
		\textbf{\underline{Case-II}:} Suppose now that $k$ is even and $\sigma$ is even-$k$-indecomposable. In this case we argue that $T\leq 2$, from which the result will follow once again applying Corollary~\ref{ree_corollary}. Recall that $\supp(C_{i_1})\cup \supp(C_{i_2})\cup \supp(C_{i_3})\subset [a]$. Let us assume that $T\neq 1$. Thus there exists $C_{i_4}$ such that $[\cup_{j=1}^{3} \supp(C_{i_j})] \cap \supp(C_{i_4})=\emptyset$. Let $b\in \supp(C_{i_4})$. Then it readily follows that $[b]\cap [a]=\emptyset$ and $\supp(C_{i_4})\subset [b].$ Note that since $\lt(\sigma_j)\neq l$ for every $j$, there must exist $C_{i_5}$ such that $\supp(C_{i_4})\cap \supp(C_{i_5})\neq \emptyset$, whence it follows that $\supp(C_{i_5})\subset [b]$. Now since $\sigma$ is even-$k$-indecomposable,  there exists $C_{i_6}$ such that $(\supp(C_{i_4})\cup \supp(C_{i_5}))\cap \supp(C_{i_6})\neq \emptyset$ and hence $\supp(C_{i_6})\subset [b]$. If $k=6$, then $T=2$ and we are done. Let us thus assume that $k>6$. Observe that the number of cycles among $C_1,\ldots, C_k$ whose support constitute a single orbit must be odd, or else our assumption that $\sigma$ is even-$k$-indecomposable  fails to hold. Thus if there are more than two orbits then we can take the cycles which constitute any two of these orbits (the number of combined cycle constituents, say $k'$, will then be even) and their product taken together (in some order) yields a product $\sigma_{t_1}\cdots \sigma_{t_f}$ where $\{t_1,\ldots,t_f\}\subsetneq \{1,2,\ldots,r\}$. This forces $\sigma$  to be $(k',k-k')$ decomposable, contrary to our assumption. Thus we conclude that $T=2$. Overall, in this case, we get that $T\leq 2$ and our proof is complete.
	\end{proof}

	As a consequence we have the following lemma which will be needed later.
	
	\begin{lemma}\label{adding_even_many_two_cycles}
		Let $\sigma\in S_n$ be such that $kl-m_{\sigma}-n_{\sigma}<k-2$ and 
		$\sigma\notin \p(k,l;n)$. Let $(\alpha \beta)(\gamma \delta)\in A_n$ be such that $\alpha, \beta, \gamma, \delta$ are distinct and $\{\alpha,\beta,\gamma,\delta\}\cap \supp(\sigma)=\emptyset$. 
		Suppose that one of the following holds
		\begin{enumerate}
			\item $dcd*(\sigma)$ consists of $2$-cycles only,
			\item $dcd*(\sigma)$ consists of $2$-cycles only except for one $4$-cycle,
			\item $dcd*(\sigma)$ consists of $2$-cycles only except for one $3$-cycle.
		\end{enumerate}
		Then $\sigma (\alpha \beta)(\gamma \delta) \notin \p(k,l;n)$.
	\end{lemma}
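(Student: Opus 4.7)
The plan is to proceed by contradiction: assume $\tau := \sigma(\alpha\beta)(\gamma\delta) \in \p(k,l;n)$ and derive $\sigma \in \p(k,l;n)$, contradicting the hypothesis. Disjointness of $\supp(\sigma)$, $\{\alpha,\beta\}$, and $\{\gamma,\delta\}$ immediately gives $m_\tau = m_\sigma + 4$ and $n_\tau = n_\sigma + 2$, so
\[
kl - m_\tau - n_\tau \;=\; (kl - m_\sigma - n_\sigma) - 6 \;<\; k-8.
\]
The cycle lengths in $dcd*(\tau)$ lie in $\{2,3,4\}$, so in the regime in which this lemma will be applied (where $l\notin\{2,3,4\}$), no cycle of $dcd*(\tau)$ has length $l$ and \Cref{indecomposable} becomes available for $\tau$. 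Since the displayed inequality violates both the $k$-indecomposable bound ($\geq k-2$) and, when $k$ is even, the even-$k$-indecomposable bound ($\geq k-4$), I conclude that $\tau$ is $k$-decomposable and (when $k$ is even) even-$k$-decomposable.

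Next I would fix a $(k',k-k')$-decomposition $\tau=\rho\mu$, choosing $k'$ even in the $k$-even case. Because $(\alpha\beta)$ and $(\gamma\delta)$ are cycles of $dcd*(\tau) = dcd*(\rho)\cup dcd*(\mu)$, each lies entirely in one factor. Multiplying $\tau=\rho\mu$ on the right by $(\alpha\beta)(\gamma\delta)$ and using that each transposition commutes with the factor not containing it, I would rewrite $\sigma=\rho_0\mu_0$ in one of two ways: \emph{Case A} has both transpositions absorbed into $\rho$, so $\rho_0 = \rho(\alpha\beta)(\gamma\delta)$ and $\mu_0 = \mu$; \emph{Case B} has one absorbed into each factor, so $\rho_0 = \rho(\alpha\beta)$ and $\mu_0 = \mu(\gamma\delta)$. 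In either case, if I can verify $\rho_0 \in \p(k',l;\cdot)$ and $\mu_0 \in \p(k-k',l;\cdot)$ on their (disjoint) supports, then $\sigma \in \p(k,l;n)$, producing the sought contradiction.

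The hard part will be the \emph{absorption step}: showing that removing the prescribed 2-cycles from $\rho$ (and, in Case B, from $\mu$) preserves being a product of the requisite number of $l$-cycles on the smaller support. I would handle this by induction on $k'$, using \Cref{n(2,l)_bounds}, \Cref{n(3,l)_bounds}, and \Cref{n(4,l)_bounds} as base cases: deleting four (or two) moved points strictly relaxes inequalities like $\lceil 3m/4\rceil\leq l$, so $\rho_0$ sits well inside the known range of $n(k',l)$. For the inductive step \Cref{indecomposable} is reapplied to $\rho$: either $\rho$ is $k'$-indecomposable and the inequality $k'l-m_\rho-n_\rho\geq k'-2$ gives enough slack to construct new $l$-cycles explicitly from the transitive $H$-action appearing in the proof of \Cref{indecomposable}, or $\rho$ is itself $k'$-decomposable and absorption reduces to smaller instances. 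In Case B, I would additionally track parity, since $\rho(\alpha\beta)$ and $\mu(\gamma\delta)$ have opposite parity to $\rho$ and $\mu$; this is automatic when $l$ is odd, and when $l$ is even the even-$k$-decomposability forces $k'$ and $k-k'$ to be even so that the parities of $\rho_0,\mu_0$ remain consistent with \Cref{even_permutation_characterization}. The main technical obstacle of the whole argument is precisely this absorption construction; the rest is case analysis and cycle-type bookkeeping.
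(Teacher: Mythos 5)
Your overall skeleton matches the paper's: pass to $\sigma'=\sigma(\alpha\beta)(\gamma\delta)$, compute $kl-m_{\sigma'}-n_{\sigma'}<k-8$, invoke \Cref{indecomposable} to force a $(k',k-k')$-decomposition $\sigma'=\rho\mu$, and analyze where $(\alpha\beta)$ and $(\gamma\delta)$ sit. But the step you yourself flag as ``the hard part'' --- the absorption --- is a genuine gap, and the way you set it up cannot be repaired. In your Case B you propose $\rho_0=\rho(\alpha\beta)$ and $\mu_0=\mu(\gamma\delta)$; these differ from $\rho$ and $\mu$ by a single transposition, so each has the opposite sign, while the sign of any product of $k'$ many $l$-cycles is fixed (even when $l$ is odd, and $(-1)^{k'}$ when $l$ is even). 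Hence $\rho_0\in\p(k',l;\cdot)$ is impossible for \emph{every} parity combination --- your remark that the parity issue ``is automatic when $l$ is odd'' is exactly backwards. In your Case A, the claim that deleting the two $2$-cycles $(\alpha\beta)(\gamma\delta)$ from a product of $k'$ many $l$-cycles leaves a product of $k'$ many $l$-cycles is precisely the contrapositive of the lemma for $k'$ \emph{without} the hypothesis $k'l-m-n<k'-2$, and your proposed justification (appealing to \Cref{n(2,l)_bounds}--\Cref{n(4,l)_bounds}, which only control support sizes up to $n(k',l)$, plus an unspecified construction of $l$-cycles from the transitive action) does not close.

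The idea you are missing is the symbol-replacement (conjugation) trick, which makes absorption trivial: relabelling points inside each $l$-cycle of a factor preserves the property of being a product of $k'$ many $l$-cycles. In the paper's Case 1 (your Case B), one replaces $\alpha,\beta$ by $\gamma,\delta$ inside the $l$-cycles composing $\rho$; this turns the disjoint cycle $(\alpha\beta)$ of $\rho$ into $(\gamma\delta)$, which then cancels against the $(\gamma\delta)$ in $\mu$, yielding $\sigma$ directly as a product of $k$ many $l$-cycles --- no parity problem, because a $2$-cycle is exchanged rather than deleted. In the paper's Case 2 (your Case A, with both transpositions in one factor, say $\mu$), one borrows a $2$-cycle $(a\,b)$ from $dcd*(\rho)$ and performs two relabellings, replacing $(a\,b)$ by $(\alpha\beta)$ in $\rho$ and $(\gamma\delta)$ by $(a\,b)$ in $\mu$; again the product collapses to $\sigma$. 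This forces an extra subcase you have not anticipated: when $dcd*(\rho)$ contains no $2$-cycle, which under hypotheses (1)--(3) means $\rho$ is a single $3$- or $4$-cycle, the paper re-applies \Cref{indecomposable} to decompose $\mu$ further and then repeats the argument. Without the relabelling device your proof does not go through.
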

	
	\begin{proof}
		Let $\sigma'=\sigma (\alpha \beta)(\gamma \delta)$. We have $m_{\sigma'}=m_{\sigma}+4$ and $n_{\sigma'}=n_{\sigma}+2$. Thus, by our assumption on $\sigma$, $kl-m_{\sigma'}-n_{\sigma'}<(k-2)-4-2=k-8$. It is enough to show that $\sigma'$ is $k$-indecomposable by \Cref{indecomposable}. Assume on the contrary that $\sigma'$ is $k$-decomposable. Thus $\sigma'=\rho \tau$ where $\rho, \tau$ are disjoint permutations in $S_n$ and there exists $k'$, $2\leq k' \leq k-2$ such that $\rho \in \p(k',l;n)$ and $\tau\in \p(k-k',l;n)$. We can write $\rho=C_{i_1}C_{i_2}\cdots C_{i_{k'}}$ and $\tau=C_{i_{k'+1}}\dots C_{i_{k}}$.
		
		\medskip
		\textbf{\underline{Case 1}:} Suppose that $\{\alpha,\beta\} \cap \supp(\rho) \neq \emptyset$ and $\{\gamma,\delta\}\cap \supp(\tau) \neq \emptyset$. Replace the symbol $\alpha$ in $C_{i_1}\cdots C_{i_{k'}}$ by the symbol $\gamma$ and the symbol $\beta$ by $\delta$ to get $C'_{i_1}\cdots C'_{i_{k'}}$. Then $C'_{i_t}$ is again an $l$-cycle for all $t$, $1 \leq t \leq k'$ and $C'_{i_1}\cdots C'_{i_{k'}}=\rho(\alpha \beta)(\gamma \delta)$. Consequently, $\sigma = \rho (\alpha \beta) (\gamma \delta) \tau = C'_{i_1}\dots C'_{i_{k'}}C_{i_{k'+1}}\cdots C_{i_{k}}$. Thus $\sigma$ is $k$-decomposable, which is a contradiction as $\sigma \notin \p(k,l;n)$. 
		
		\medskip
		
		\textbf{\underline{Case 2}:} Suppose that either $\{\alpha,\beta,\gamma,\delta\}\cap \supp(\rho)=\emptyset$ or $\{\alpha,\beta,\gamma,\delta\}\cap \supp(\tau)=\emptyset$. WLOG we may assume that $\{\alpha,\beta,\gamma,\delta\}\cap \supp(\rho)=\emptyset$. 
		
		\medskip
		
		\noindent \textbf{\underline{Subcase 2A}:} Suppose that there exists a 2-cycle in $dcd*(\rho)$. We fix a 2-cycle $(a,b)\in dcd*(\rho)$ and replace  the symbols $a$ and $b$ in $C_{i_1}\cdots C_{i_{k'}}$ by $\alpha$ and $\beta$ respectively to yield a product $C'_{i_1}\cdots C'_{i_{k'}}$. Then $C'_{i_t}$ is again an $l$-cycle for all $t$, $1 \leq t \leq k'$ and $\rho(a b)(\alpha \beta)=C'_{i_1}\cdots C'_{i_{k'}}$. Similarly, replace the symbols $\gamma$ and $\delta$ in $C_{i_{k'+1}}\cdots C_{i_k}$ by $a$ and $b$ respectively to yield a product $C'_{i_{k'+1}}\cdots C'_{i_k}$. Once again $C'_{t}$ is a $l$-cycle for all $t$, $k'+1 \leq t \leq k$ and $\tau(\gamma \delta)(a b)=C'_{i_{k'+1}}\cdots C'_{i_k}$. Consequently, $\sigma = \rho (a b)(\alpha \beta)\tau (\gamma \delta)(a b)=C'_{i_1}\cdots C'_{i_{k'}}C'_{i_{k'+1}}\cdots C'_{i_k}$. Thus $\sigma$ is $k$-decomposable, which is a contradiction as $\sigma \notin \p(k,l;n)$.
		
		\medskip
		
		\noindent \textbf{\underline{Subcase 2B}:} Suppose that $\rho$ is a $3$-cycle or a $4$-cycle.
		If $\rho$ is a $3$-cycle then $m_{\tau}=m_{\sigma'}-3=m_{\sigma}+4-3=m_{\sigma}+1$ and $n_{\tau}=n_{\sigma'}-1=n_{\sigma}+2-1=n_{\sigma}+1$, which implies that $(k-k')l-m_{\tau}-n_{\tau}=kl-m_{\sigma}-n_{\sigma}-2-k'l<k-k'-2$. 
		Similarly, if $\rho$ is a $4$-cycle then $m_{\tau}=m_{\sigma'}-4=m_{\sigma}+4-4=m_{\sigma}$ and $n_{\tau}=n_{\sigma'}-1=n_{\sigma}+2-1=n_{\sigma}+1$, which implies that $(k-k')l-m_{\tau}-n_{\tau}=kl-m_{\sigma}-n_{\sigma}-1-k'l<k-k'-2$. 
		Thus $(k-k')l-m_{\tau}-n_{\tau}<k-k'-2$. Since $\tau \in P(k-k',l;n)$, $\tau $ is ($k-k'$)-decomposable by \Cref{indecomposable}. Thus there exists disjoint permutations $\tau_1, \tau_2 \in S_n$ such that $\tau=\tau_1 \tau_2$ and 
		$\tau_1=C_{j_1} \cdots C_{j_{k''}}$, $\tau_2=C_{j_{k''+1}}\cdots C_{j_{k-k'}}$ 
		where $({j_1}, \dots ,j_{k''},j_{k''+1}, \dots ,j_{k-k'})=(i_{k'+1},\dots ,i_{k})$, 
		upto a permutation possibly. Thus $\sigma'=\rho\tau_1 \tau_2$. 
		Now if $\{\alpha, \beta\} \cap \supp(\tau_1)\neq \emptyset $ and $\{\gamma, \delta\}\cap \supp(\tau_2)\neq \emptyset$, or $\{\alpha,\beta\}\cap \supp(\tau_2)\neq \emptyset $ and $\{\gamma,\delta\}\cap \supp(\tau_1)\neq \emptyset$ then arguing as in Case 1, we get that $\sigma=(\rho\tau_1)\tau_2$ is $k$-indecomposble, which is a contradiction. If $\{\alpha,\beta,\gamma,\delta\}\cap \supp(\tau_1)=\emptyset$, then considering $\sigma' =(\rho \tau_1)\tau_2$ and arguing as in Subcase 2A, we get that $\sigma$ is $k$-decomposable, which is a contradiction. Similarly, if $\{\alpha ,\beta,\gamma,\delta\}\cap \supp(\tau_2)=\emptyset$ then considering $\sigma'=(\rho \tau_2)\tau_{1}$ and arguing as in Subcase 2A, we get that $\sigma$ is $k$-decomposable, which is a contradiction. 
	\end{proof}
	
	\noindent As an yet another consequence of \Cref{indecomposable}, we prove \Cref{maintheorem2}.
	\begin{proof}[\textbf{Proof of \Cref{maintheorem2}}]
		Consider $\sigma=(1\;2\;\cdots\;k+3)\in A_{k+3}$ since $k$ is even. Clearly $\sigma$ is $(k+3)$-indecomposable. Note that $kl-m_{\sigma}-n_{\sigma}=2k-k-3-1=k-4<k-2$. By \Cref{indecomposable}, we conclude that $\sigma \notin \p(k,l;k+3)$. Thus $n(k,2)\leq k+2$.
		
		To complete the proof we show that $n(k,l)\geq k+2$. Let $\sigma \in A_{k+2}$. 
		
		\noindent \textbf{\underline{Case I}:} Let $n_{\sigma}=1$. Let $\lt(\sigma)=t$ where $t$ is odd and $t\leq k+1$. Clearly $\sigma$ can be written as a product of $t-1$ many 2-cycles. Since $t-1\leq k$ and both $t-1$ and $k$ are even, we conclude that $\sigma \in \p(k,2; k+2)$.
		
		\noindent \textbf{\underline{Case II}:} Let $n_{\sigma}\geq 2$. Let $\sigma=\sigma_1\sigma_2\cdots \sigma_{n_{\sigma}}$ be the disjoint cycle decomposition of $\sigma$. Then $\sigma$ can be written as a product of $\sum_{i=1}^{n_{\sigma}} (m_{\sigma_i}-1)$ many 2-cycles. Since $\sum_{i=1}^{n_{\sigma}}( m_{\sigma_i}-1)=m_{\sigma}-n_{\sigma}$, we conclude that $\sigma \in \p(m_{\sigma}-n_{\sigma},l; k+2)$. Since $m_{\sigma}-n_{\sigma}$ is even and $m_{\sigma}-n_{\sigma}\leq k$, we conclude that $\sigma \in \p(k,l;k+2)$. Thus $n(k,2)\geq k+2$ and the proof is complete.
	\end{proof}
	
	Now we state and prove some more general results which will contribute towards the proof of our main theorem. The following theorem of \cite{hkl} will be required.
	
	\begin{theorem}[Theorem 7, \cite{hkl}]\label{product_of_two_cycles}
		Let $\sigma \in S_n$ and let $l_1,l_2\in \mathbb{N}$, $n\geq l_1\geq l_2\geq 2$. Then $\sigma=C_1C_2$, where $C_1,C_2$ are cycles in $S_n$ of lengths $l_1,l_2$ respectively, if and only if either $n_{\sigma}=2$, $l_1,l_2$ are the lengths of the cycles in $dcd*(\sigma)$ and $l_1+l_2=m_{\sigma}$, or the following conditions hold:
		
		\begin{enumerate}
			\item $l_1+l_2=m_{\sigma}+n_{\sigma}+2s$ for some $s\in \mathbb{Z}_{\geq 0}$, and
			\item $l_1-l_2\leq m_{\sigma}-n_{\sigma}.$
		\end{enumerate}
	\end{theorem}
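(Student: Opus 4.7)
The plan is to prove the two directions separately, with necessity being quick (via standard signed-length tools) and sufficiency requiring an explicit combinatorial construction.

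For the necessity direction, I would first dispose of the subcase where $\supp(C_1)\cap \supp(C_2)=\emptyset$: then $\sigma=C_1C_2$ is literally the juxtaposition of two disjoint cycles, so $n_\sigma=2$, the cycle lengths in $dcd*(\sigma)$ are $l_1,l_2$, and $m_\sigma=l_1+l_2$, which is case (a). So assume the supports overlap. The parity half of (1), namely $l_1+l_2\equiv m_\sigma+n_\sigma\pmod 2$, is immediate from $\operatorname{sgn}(\sigma)=(-1)^{m_\sigma-n_\sigma}$ and $\operatorname{sgn}(C_1C_2)=(-1)^{l_1+l_2}$. Condition (2) is the triangle inequality for the reflection length $\ell(\pi):=m_\pi-n_\pi$ on $S_n$: since $C_1=\sigma C_2^{-1}$, one has $l_1-1=\ell(C_1)\leq \ell(\sigma)+\ell(C_2^{-1})=(m_\sigma-n_\sigma)+(l_2-1)$, which rearranges to $l_1-l_2\leq m_\sigma-n_\sigma$. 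The remaining inequality $s\geq 0$, i.e.\ $l_1+l_2\geq m_\sigma+n_\sigma$, falls out of \Cref{theorem_of_Ree} applied to $C_1\cdot C_2\cdot \sigma^{-1}=1$: the overlap of supports forces $\langle C_1,C_2\rangle$ to have a single non-trivial orbit on $\supp(C_1)\cup \supp(C_2)$, so writing $n'=|\supp(C_1)\cup\supp(C_2)|\geq m_\sigma$ the Ree bound reads $(l_1-1)+(l_2-1)+(m_\sigma-n_\sigma)\geq 2(n'-1)\geq 2(m_\sigma-1)$, which simplifies to the desired inequality.

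For the sufficiency direction, I would construct $C_1, C_2$ by induction on $s$. The base case $s=0$ is the tight one, with $l_1+l_2=m_\sigma+n_\sigma=\sum_{j=1}^{n_\sigma}(\mathfrak{l}(\sigma_j)+1)$. I would build $C_1, C_2$ by threading them through the disjoint cycle decomposition $dcd*(\sigma)=\sigma_1\cdots \sigma_{n_\sigma}$: each $\sigma_j$ of length $\mathfrak{l}(\sigma_j)$ consumes a budget of $\mathfrak{l}(\sigma_j)+1$ (the ``$+1$'' being the splice point where $C_1$ and $C_2$ swap roles), so the pieces assemble into two cycles of lengths $l_1$ and $l_2$ whose product is $\sigma$. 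Condition (2) is exactly what guarantees that the shorter cycle $C_2$ is long enough to touch every $\sigma_j$. For the inductive step $s\to s+1$, I would apply a local surgery $C_1\mapsto C_1\tau^{-1}$, $C_2\mapsto \tau C_2$ where $\tau$ is a judiciously chosen 3-cycle involving one fixed point of $\sigma$: this preserves both the product and the lengths while absorbing one additional symbol into the joint support, effectively raising $s$ by 1.

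The main obstacle is the base-case construction of $C_1$ and $C_2$ when $s=0$. Producing an explicit threading for every admissible triple $(l_1,l_2,\sigma)$ requires a careful case analysis, especially when $l_2$ is small (for instance $l_2=2$, where $C_2$ is a transposition and can only glue at most two of the $\sigma_j$'s) or when some $\mathfrak{l}(\sigma_j)$ is close to $l_1$. Inequality (2) is precisely the combinatorial constraint that makes this assignment feasible in all cases, and I would expect the bulk of the write-up to go into verifying the threading across these edge cases and checking that the resulting $C_1C_2$ indeed equals $\sigma$ up to the cyclic permutations allowed within each $\sigma_j$.
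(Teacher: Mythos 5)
This statement is imported verbatim from \cite{hkl} (Theorem 7 there) and the paper gives no proof of it, so there is no internal argument to compare yours against; I can only judge the proposal on its own terms. Your necessity direction is correct and essentially complete: the disjoint-support case yields the first alternative; the parity part of (1) follows from signs; condition (2) is subadditivity of the absolute length $\ell(\pi)=m_\pi-n_\pi$ applied to $C_1=\sigma C_2^{-1}$; and $s\geq 0$ follows from \Cref{theorem_of_Ree} with $T=1$ on the joint support, exactly as you say.

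The sufficiency direction has a genuine gap beyond the (admitted) deferral of the $s=0$ threading. Your inductive step --- absorb one fixed point of $\sigma$ into both cycles, enlarging the joint support by one symbol per unit of $s$ --- cannot cover all cases, because after $s$ steps the joint support would have size $m_\sigma+s$, and nothing guarantees $n\geq m_\sigma+s$. Indeed $n\geq l_1=\frac{m_\sigma+n_\sigma+(l_1-l_2)}{2}+s$, which is strictly less than $m_\sigma+s$ whenever inequality (2) is strict, so one can have $n<m_\sigma+s$. Concretely, take $\sigma=(1\,2\,3)\in S_3$ with $l_1=l_2=3$: conditions (1) and (2) hold with $s=1$, and indeed $(1\,2\,3)=(1\,3\,2)(1\,3\,2)$, but the joint support is still $\{1,2,3\}$; there is no fixed point to recruit, and the surgery $C_1\mapsto C_1\tau^{-1}$, $C_2\mapsto\tau C_2$ with $\tau$ touching a new symbol has no symbol to use. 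The surplus $2s$ must in general be realized by making $C_1$ and $C_2$ overlap more deeply \emph{inside} $\supp(\sigma)$, not only by recruiting fixed points, so your induction needs a second type of surgery (or, better, a single direct construction that distributes the surplus over the cycles of $\sigma$ and the available fixed points simultaneously, which is how such results are usually proved). Separately, your description of the step is internally inconsistent: an operation that ``preserves the lengths'' cannot change $s$ for fixed $\sigma$; what you need is an operation preserving the product and the property of being cycles while increasing the lengths, and you should also verify that the modified target pair $(l_1',l_2')$ at level $s-1$ still satisfies $l_1'\geq l_2'\geq 2$ and condition (2).
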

	
	\noindent The following two lemmas will also be needed.
	
	\begin{lemma}[Lemma 3.6, \cite{kkm}]\label{cycles_as_products_of_cycles}
		Let $l,k\in \mathbb{N}$ such that $l\geq 2$ and $k\geq 1$. Suppose that $\sigma\in S_n$ be a cycle such that $\mathfrak{l}(\sigma)=l+(k-1)(l-1)$. Then $\sigma\in \p(k,l;n)$.
	\end{lemma}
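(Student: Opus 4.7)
I plan to prove this by induction on $k$, using the standard fact that a long cycle can be split as a product of two shorter cycles that share exactly one point.

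The base case $k=1$ is immediate, since $\mathfrak{l}(\sigma)=l$ means $\sigma$ is itself an $l$-cycle, so $\sigma\in\p(1,l;n)$.

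For the inductive step, assume the result for $k-1$. Let $\sigma\in S_n$ be a cycle with $\mathfrak{l}(\sigma)=L:=l+(k-1)(l-1)=kl-k+1$, and write $\sigma=(a_1\,a_2\,\cdots\,a_L)$. Define
\[
\tau_1=(a_1\,a_2\,\cdots\,a_l)\qquad\text{and}\qquad \tau_2=(a_l\,a_{l+1}\,\cdots\,a_L).
\]
A direct computation (tracking each $a_i$ through $\tau_2$ then $\tau_1$, splitting into the cases $i<l$, $i=l$, $l<i<L$, and $i=L$) shows $\tau_1\tau_2=\sigma$. The cycle $\tau_1$ has length $l$, and $\tau_2$ has length
\[
L-l+1=(kl-k+1)-l+1=(k-1)l-(k-2)=l+(k-2)(l-1).
\]
By the inductive hypothesis applied to $\tau_2$, we get $\tau_2\in\p(k-1,l;n)$, say $\tau_2=D_1\cdots D_{k-1}$ with each $D_i$ an $l$-cycle in $S_n$. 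Therefore $\sigma=\tau_1 D_1\cdots D_{k-1}\in\p(k,l;n)$, completing the induction.

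There is no real obstacle: the only substantive step is verifying the cycle identity $\tau_1\tau_2=(a_1\,a_2\,\cdots\,a_L)$, which is a routine check once one notices that $\tau_1$ and $\tau_2$ overlap precisely at the symbol $a_l$. The length arithmetic $L-l+1=l+(k-2)(l-1)$ is exactly what makes the induction close up, so the decomposition is forced upon us once we decide to peel off a single $l$-cycle at one end of $\sigma$.
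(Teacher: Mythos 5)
Your proof is correct. Note that the paper itself does not prove this statement --- it is quoted verbatim as Lemma 3.6 of \cite{kkm} and used as a black box --- so there is no in-text argument to compare against; your induction, peeling off one $l$-cycle $\tau_1=(a_1\,\cdots\,a_l)$ that overlaps the remaining cycle $\tau_2=(a_l\,\cdots\,a_L)$ in the single symbol $a_l$, is the standard way to establish it, the length bookkeeping $L-l+1=l+(k-2)(l-1)$ is right, and the verification of $\tau_1\tau_2=\sigma$ is consistent with the paper's right-to-left composition convention.
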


	\begin{lemma}[Lemma 3.7, \cite{kkm}]\label{lengthening_of_cycles}
		Suppose $l,k\geq 2$ and $n\geq 4$ be natural numbers such that when $k$ is even, $2\leq l \leq n-1$ and when $k$ is odd, $2\leq l\leq n-2$. Then $\p(k,l;n)\subseteq \p(k,l+1;n)$ when $k$ is even, and $\p(k,l;n)\subseteq \p(k,l+2;n)$ when $k$ is odd.
	\end{lemma}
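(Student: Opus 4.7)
The plan is to reduce the lemma to a sublemma about pairs of cycles and apply Theorem~\ref{product_of_two_cycles} to propagate the decomposition. The \emph{sublemma} I would establish is: if $\tau \in S_n$ is a product of two $l$-cycles and $n \geq l+t$ for some $t \in \{1,2\}$, then $\tau$ is also a product of two $(l+t)$-cycles. This should follow from Theorem~\ref{product_of_two_cycles}: the hypothesis gives either the trivial case ($\tau$ has cycle structure $(l,l)$) or the non-trivial conditions with $2l = m_\tau + n_\tau + 2s$ and $0 \leq m_\tau - n_\tau$. For the target $(l+t,l+t)$-decomposition, the trivial case never applies (the target lengths differ from $l$), while the non-trivial conditions do: $2(l+t) = m_\tau + n_\tau + 2(s+t)$ with $s+t \geq 0$, and $0 \leq m_\tau - n_\tau$ is unchanged. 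The hypothesis $n \geq l+t$ ensures the target $(l+t)$-cycles live in $S_n$.

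For the $k$ even case, I would group the $k$ cycles into $k/2$ consecutive pairs $(C_{2i-1}, C_{2i})$ and apply the sublemma with $t=1$ to each pair independently, replacing it by a pair of $(l+1)$-cycles $(D_{2i-1}, D_{2i})$ having the same product. Concatenating yields $\sigma = D_1 D_2 \cdots D_k \in \p(k,l+1;n)$, as required.

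For the $k$ odd case with $k \geq 3$, I would first handle the initial $k-1$ cycles (an even number) by pairing and applying the sublemma with $t=2$, yielding $\sigma = D_1 D_2 \cdots D_{k-1} C_k$ where the $D_i$ are $(l+2)$-cycles and $C_k$ remains an $l$-cycle. The final step rewrites $D_{k-1} C_k$---a product of an $(l+2)$-cycle and an $l$-cycle---as a product of two $(l+2)$-cycles via one more application of Theorem~\ref{product_of_two_cycles}. Here the verification splits into two sub-cases: if $D_{k-1}$ and $C_k$ are disjoint we are in the trivial case for $(l+2,l)$ with $m_\tau = 2l+2$, $n_\tau = 2$, and the target $(l+2,l+2)$-conditions are satisfied with $s' = 0$; otherwise the non-trivial conditions for $(l+2,l)$ force $m_\tau - n_\tau \geq 2$, and the target conditions hold with $s' = s+1 \geq 1$.

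The main obstacle I anticipate is the delicate handling of the trivial case in Theorem~\ref{product_of_two_cycles}: both in the core sublemma and in the final two-cycle rewriting of the $k$ odd case, one must consistently pass from a trivial configuration at the original cycle lengths to a non-trivial configuration at the target lengths, and explicitly check nonnegativity of the shift parameter and the length-difference constraint in each sub-case. The length hypotheses $n \geq l+1$ (respectively $n \geq l+2$) are exactly what one needs for the target cycles to fit in $S_n$, while the parity gap between $l$- and $(l+1)$-cycles is precisely what forces the odd-$k$ case to increment by two rather than one.
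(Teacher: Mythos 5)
The paper does not actually prove this lemma: it is imported verbatim as Lemma 3.7 of \cite{kkm}, so there is no in-paper argument to compare against. Judged on its own, your proof is correct and complete. The sublemma is exactly right: the ``only if'' direction of Theorem~\ref{product_of_two_cycles} applied to a product of two $l$-cycles yields either the disjoint case ($m_\tau=2l$, $n_\tau=2$, for which the target conditions hold with $s'=t-1\geq 0$ and $m_\tau-n_\tau=2l-2\geq 0$ since $l\geq 2$) or the pair of inequalities, which are stable under replacing $s$ by $s+t$; the identity permutation ($m_\tau=n_\tau=0$) is silently but correctly absorbed into the second alternative. The pairing argument for even $k$ and the $(k-1)$-plus-one treatment for odd $k$ both go through, the final rewriting of $D_{k-1}C_k$ is verified in both sub-cases of the theorem, and the hypotheses $l\leq n-1$ (resp.\ $l\leq n-2$) are used exactly where needed, namely to ensure the longer cycles fit in $S_n$. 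This is a clean derivation that leans entirely on the Herzog--Kaplan--Lev two-cycle criterion rather than on an explicit manipulation of cycle words (e.g.\ inserting a transposition and its inverse between adjacent factors), and it has the virtue that the parity obstruction forcing the increment of $2$ in the odd case falls out of the arithmetic of condition (1) rather than needing a separate remark.
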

	
	\noindent The following two propositions will play a crucial role in the proof of the main theorem.
	\begin{proposition}\label{bounded_part_partitions_l=2(mod 3)}
		Let $k\geq 5$, $l\equiv 2\;(\Mod\;3)$ and $3<l\leq n \leq \frac{k(2l-1)}{3}+1$. Suppose $\sigma \in A_n$ be such that $n_{\sigma}\leq \frac{k}{3}(l-2)+1$. Then $\sigma \in \p(k,l;n)$.
	\end{proposition}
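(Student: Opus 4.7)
The plan is to realize $\sigma$ as a product $D_1 D_2$ of two cycles via Theorem~\ref{product_of_two_cycles}, and then to decompose each $D_i$ into $k_i$ many $l$-cycles via Lemma~\ref{cycles_as_products_of_cycles}, padding with extra $l$-cycles whose product is the identity when $k_1 + k_2 < k$. Concretely, for positive integers $k_1, k_2$ I set $l_i = k_i(l-1)+1$; Lemma~\ref{cycles_as_products_of_cycles} then expresses any cycle of length $l_i$ as a product of $k_i$ many $l$-cycles. The padding relies on $CC^{-1}=e$ (two cycles) and, when $l$ is odd, on a relation $C_1 C_2 C_3 = e$ obtained by decomposing a fixed $l$-cycle as a product of two $l$-cycles inside $S_l$ via Theorem~\ref{n(2,l)_bounds}; hence any deficit $k-k_1-k_2 \geq 2$ can be filled.

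The arithmetic is supplied exactly by the hypotheses: combining $n_\sigma \leq \frac{k(l-2)}{3}+1$ with $m_\sigma \leq n \leq \frac{k(2l-1)}{3}+1$ yields $m_\sigma + n_\sigma \leq k(l-1)+2$, so choosing $k_1+k_2=k$ gives $l_1+l_2 = k(l-1)+2 \geq m_\sigma+n_\sigma$, with the required parity matching because $\sigma \in A_n$ in each admissible parity case. This settles condition (1) of Theorem~\ref{product_of_two_cycles} with the nonnegative integer $s$ being $\bigl(k(l-1)+2-m_\sigma-n_\sigma\bigr)/2$. The upper bound on $n$ is also consistent with the constraint $l_1 \leq n$ for near-balanced choices of $(k_1,k_2)$, since $k_1 \leq \tfrac{n-1}{l-1} \lesssim \tfrac{2k}{3}$ leaves room for $k_1,k_2$ close to $k/2$.

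The main obstacle is condition (2), namely $l_1-l_2 = (k_1-k_2)(l-1) \leq m_\sigma-n_\sigma$. When $k$ is even the balanced choice $(k_1,k_2)=(k/2,k/2)$ makes $l_1=l_2$, so (2) is automatic. For $k$ odd (which forces $l$ odd under our standing parity convention) the best near-balanced choice needs $m_\sigma-n_\sigma \geq l-1$; this holds in the ``thick'' regime, where $\sigma$ has at least one long cycle or enough moved points per cycle. In the residual ``thin'' regime $n_\sigma \leq \frac{k(l-2)}{3}+1$ together with $m_\sigma < n_\sigma+l-1$ forces $m_\sigma+n_\sigma$ to be small (at most of order $2l$), and I would instead pick a smaller pair such as $(k_1,k_2) \in \{(1,1),(2,1),(2,2)\}$ satisfying both (1) and (2), padding the missing $k-k_1-k_2 \geq 2$ cycles with identity-product $l$-cycles inside $\{1,\dots,n\}$, which is possible since $n \geq l$. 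The technical core of the proof is a careful case analysis on $(m_\sigma, n_\sigma)$ verifying that some admissible $(k_1,k_2)$ exists in every case permitted by the hypotheses, together with edge-case checks (for instance $n_\sigma \in \{1,2\}$, or $\sigma$ being a single $l$-cycle, or $\sigma$ being of very small support) where Theorem~\ref{product_of_two_cycles} may degenerate into its first alternative and must be supplemented by direct constructions or by the small-$k$ results of Theorem~\ref{n(2,l)_bounds}, Theorem~\ref{n(3,l)_bounds}, and Theorem~\ref{n(4,l)_bounds}.
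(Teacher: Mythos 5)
Your overall strategy---write $\sigma=D_1D_2$ with $\mathfrak{l}(D_i)=k_i(l-1)+1$ via Theorem~\ref{product_of_two_cycles}, expand each $D_i$ by \Cref{cycles_as_products_of_cycles}, and absorb a deficit $k-k_1-k_2\geq 2$ by identity products of $l$-cycles---is essentially the paper's argument for $k$ even (balanced split) and for the large-support case of $k$ odd (near-balanced split). The genuine gap is in your ``thin'' regime. You claim the pairs $(k_1,k_2)\in\{(1,1),(2,1),(2,2)\}$ suffice there; this is false, and in fact \emph{no} pair can work. Take $k=5$, $l=11$, and $\sigma\in A_{16}$ a product of eight disjoint transpositions: all hypotheses of the proposition hold ($16\leq\frac{5(2l-1)}{3}+1=36$ and $n_{\sigma}=8\leq 16$), with $m_{\sigma}+n_{\sigma}=24$ and $m_{\sigma}-n_{\sigma}=8$. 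Condition (1) of Theorem~\ref{product_of_two_cycles} forces $10(k_1+k_2)+2\geq 24$, so $k_1+k_2\geq 3$; every unbalanced pair has $l_1-l_2\geq 10>8$, violating condition (2); the only balanced option is $(2,2)$, which leaves a deficit of $k-4=1$ that cannot be filled by an identity product (and needs a $21$-cycle, which does not even fit in $S_{16}$). The same obstruction persists for any admissible $n$, so your program terminates without a decomposition while the statement is true. A second instance of the same failure: when $n<2l-1$ no cycle of length $k_1(l-1)+1$ with $k_1\geq 2$ fits in $S_n$, yet $m_{\sigma}+n_{\sigma}$ can be as large as $3l-3>2l=l_1+l_2$ for the only remaining pair $(1,1)$.

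What is missing is precisely how the paper handles this regime: for $k$ odd and $m_{\sigma}\leq 2l-1$ it abandons the two-long-cycles device altogether and applies Theorem~\ref{n(3,l)_bounds} to write $\sigma$ as a product of \emph{three} $l_1$-cycles for a suitable odd $l_1$ with $\lceil m_{\sigma}/2\rceil\leq l_1\leq\min\{l,m_{\sigma}\}$, then passes from $3$ to $k$ cycles by \Cref{increasing_cycles} and from $l_1$ to $l$ by \Cref{lengthening_of_cycles}. This three-cycle mechanism is not the boundary ``edge-case'' supplement your last sentence gestures at; it is the only available route for a large family of admissible $\sigma$ (everything supported on at most $2l-1$ points with many short cycles), and without it the proof is incomplete.
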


	\begin{proof}
		We divide the proof in two cases. At first we assume that $k$ is even and $k\geq 6$. In this case, take $l_1=l_2=l+(\frac{k}{2}-1)(l-1)$. Note that $l_1-l_2=0\leq m_{\sigma}-n_{\sigma}$. Further
		$$m_{\sigma}+n_{\sigma}\leq k(l-1)+2=l_1+l_2.$$ 
		Since $m_{\sigma}+n_{\sigma}$ and $l_1+l_2$ are both even, applying \Cref{product_of_two_cycles} we conclude that $\sigma=C_1C_2$ where $C_1$ and $C_2$ are cycles each of length $l+(\frac{k}{2}-1)(l-1)$. From \Cref{cycles_as_products_of_cycles}, both $C_1$ and $C_2$ can be written as a product of $k/2$ many $l$-cycles, whence it follows that $\sigma \in \p(k,l;n)$.
		
		Now assume that $k\geq 5$ and $k$ is odd. Note that $l$ must be odd. We make some cases.
		
		\textbf{\underline{Case-I}:} Suppose that $3\leq m_{\sigma}\leq l-1$. Either $m_{\sigma}$ or $m_{\sigma}-1$ is an odd integer which we call $l_1$. Then $\lceil \frac{m_{\sigma}}{2}\rceil \leq m_{\sigma}-1\leq l_1\leq m_{\sigma}$. Using Theorem~\ref{n(3,l)_bounds}, we conclude that $\sigma \in \p(3,l_1;n)$. Since $l_1$ is odd, we get that $\sigma \in \p(k,l_1;n)$. Finally as $l_1< l$ and both $l_1$ and $l$ are odd, using \Cref{lengthening_of_cycles}, we get $\sigma\in \p(k,l;n)$.
		
		\medskip
		
		\textbf{\underline{Case-II}:} Suppose that $l\leq m_{\sigma}\leq 2l-1$. Then,  $\lceil\frac{m_{\sigma}}{2}\rceil\leq l\leq m_{\sigma}$. From Theorem~\ref{n(3,l)_bounds} we conclude that $\sigma \in \p(3,l;n)$. Since $l$ is odd, $\sigma \in \p(k,l;n)$ as required.
		
		\medskip
		
		\textbf{\underline{Case-III}:}  Note that $m_{\sigma}\geq 2n_{\sigma}$. Thus, in this final case, we assume that $m_{\sigma}\geq \mathrm{max}\{2l,2n_{\sigma}\}$. Clearly,
		$$m_{\sigma}-n_{\sigma}\geq \mathrm{max}\{2l-n_{\sigma}, n_{\sigma}\}\geq l>l-1.$$
		Take $l_1=l+(\frac{k+1}{2}-1)(l-1)$ and $l_2=l+(\frac{k-1}{2}-1)(l-1)$. Note that $l_1\geq l_2$ and $l_1-l_2=l-1$. Thus $m_{\sigma}-n_{\sigma}\geq l_1-l_2$. Also $l_1+l_2=k(l-1)+2$. As in the previous case, we get that
		$$m_{\sigma}+n_{\sigma}\leq k(l-1)+2=l_1+l_2.$$
		Note further that $m_{\sigma}+n_{\sigma}$ is even and so is $l_1+l_2$ as $l$ is odd.
		Using \Cref{product_of_two_cycles} we conclude that $\sigma=C_1C_2$ where $C_1$ and $C_2$ are cycles of length $l_1$ and $l_2$ respectively. Using \Cref{cycles_as_products_of_cycles}, we get that $C_1$ is a product of $\frac{k+1}{2}$ many $l$-cycles and $C_2$ is a product of $\frac{k-1}{2}$ many $l$-cycles, whence it follows that $\sigma \in \p(k,l;n)$. This completes the proof.
	\end{proof}
	
	\begin{proposition}\label{bounded_part_partitions_l=1(mod 3)}
		Let $k\geq 5$, $l\equiv 1\;(\Mod\;3)$ and $3<l\leq n \leq \frac{2k(l-1)}{3}+2$. Suppose $\sigma \in A_n$ such that $n_{\sigma}\leq \frac{k}{3}(l-1)$. Then $\sigma \in \p(k,l;n)$.
	\end{proposition}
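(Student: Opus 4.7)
The plan is to mirror verbatim the strategy of Proposition \ref{bounded_part_partitions_l=2(mod 3)}: for each $\sigma$, construct a two-cycle factorization $\sigma=C_1C_2$ with $C_i$ a cycle of length $l_i$ via Theorem \ref{product_of_two_cycles}, and then expand each $C_i$ as a product of $l$-cycles via Lemma \ref{cycles_as_products_of_cycles}. The arithmetic backbone is that the hypotheses $m_\sigma\le n\le \tfrac{2k(l-1)}{3}+2$ and $n_\sigma\le \tfrac{k(l-1)}{3}$ combine to give $m_\sigma+n_\sigma \le k(l-1)+2$, and the $l_i$ will be chosen so that $l_1+l_2=k(l-1)+2$ exactly.

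For $k$ even, take $l_1=l_2=l+(\tfrac{k}{2}-1)(l-1)$. Then $l_1+l_2=k(l-1)+2$ is even (matching the parity of $m_\sigma+n_\sigma$ since $\sigma\in A_n$), $l_1-l_2=0\le m_\sigma-n_\sigma$, and so Theorem \ref{product_of_two_cycles} applies; Lemma \ref{cycles_as_products_of_cycles} then writes each $C_i$ as a product of $\tfrac{k}{2}$ many $l$-cycles, giving $\sigma\in\p(k,l;n)$.

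For $k$ odd (which forces $l$ odd, hence $l\ge 7$ since $l\equiv 1\pmod 3$), the symmetric choice $l_1=l_2$ is no longer compatible with Lemma \ref{cycles_as_products_of_cycles}, so I would split on the size of $m_\sigma$ exactly as in Proposition \ref{bounded_part_partitions_l=2(mod 3)}. When $3\le m_\sigma\le l-1$, pick $l_1$ to be the largest odd integer $\le m_\sigma$ and chain Theorem \ref{n(3,l)_bounds}, Observation \ref{increasing_cycles}, and iterated Lemma \ref{lengthening_of_cycles} to transport $\sigma$ from $\p(3,l_1;n)$ to $\p(k,l;n)$. When $l\le m_\sigma\le 2l-1$, Theorem \ref{n(3,l)_bounds} directly gives $\sigma\in\p(3,l;n)\subseteq\p(k,l;n)$. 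When $m_\sigma\ge 2l$, take the asymmetric split $l_1=l+\tfrac{k-1}{2}(l-1)$ and $l_2=l+\tfrac{k-3}{2}(l-1)$, so $l_1+l_2=k(l-1)+2$, $l_1-l_2=l-1$, and $m_\sigma-n_\sigma\ge m_\sigma/2\ge l>l-1$ (using $n_\sigma\le m_\sigma/2$); Theorem \ref{product_of_two_cycles} produces $\sigma=C_1C_2$, and Lemma \ref{cycles_as_products_of_cycles} writes $C_1, C_2$ as products of $\tfrac{k+1}{2}$ and $\tfrac{k-1}{2}$ many $l$-cycles respectively.

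The main obstacle is the final odd-$k$ subcase $m_\sigma\ge 2l$: the parity of $k$ forces $l_1\ne l_2$, so one must check that $m_\sigma-n_\sigma$ accommodates the gap $l_1-l_2=l-1$. This is precisely what dictates the three-way split on $m_\sigma$ and the choice of threshold $2l$. Everything else reduces to the single identity $m_\sigma+n_\sigma\le k(l-1)+2=l_1+l_2$ supplied by the hypotheses, together with routine parity bookkeeping (even on both sides because $l-1$ is even in the odd-$k$ case, and $k$ is even in the other case).
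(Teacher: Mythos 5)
Your proposal is correct and is exactly the argument the paper intends: its own proof of this proposition just says to use the bounds $m_\sigma\le \frac{2k(l-1)}{3}+2$, $n_\sigma\le\frac{k(l-1)}{3}$ (which give $m_\sigma+n_\sigma\le k(l-1)+2$) and to argue as in Proposition~\ref{bounded_part_partitions_l=2(mod 3)}, which is precisely the case split and the choices of $l_1,l_2$ you describe. Nothing further to add.
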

	
	\begin{proof}
		Using the bounds $m_{\sigma}\leq \frac{2k(l-1)}{3}+2$ and $n_{\sigma}\leq \frac{k(l-1)}{3}$ and arguing as in the previous proposition, the proof follows.
	\end{proof}
	
	To end this section, we outline a rough strategy behind the proof of \Cref{maintheorem}.
	
	\subsection{A rough strategy} For $k\geq 2$ and $l>3$ with $3\nmid l$, let $n=n(k,l)$ be as in Theorem~\ref{maintheorem}. In the previous two propositions we have proved that if  $\sigma\in A_n$ is such that $n_{\sigma}\leq f(k,l)$ (where $f(k,l)$ is determined depending on $l\equiv 1\;(\Mod\; 3)$ or $l\equiv 2\;(\Mod\; 3)$) then $\sigma \in \p(k,l;n)$. When $\sigma$ is such that $n_{\sigma}>f(k,l)$, to show that $\sigma \in \p(k,l;n)$, we simply show that such a $\sigma$ is $k$-decomposable. This is established using induction on $k$ (since by Remark~\ref{newvalues_k=2_k=3_k=4} we have the base cases). To facilitate the induction, we prove that $\sigma$ can be written as a product of two disjoint even permutations, say $\rho$ and $\tau$, such that $\supp(\rho)$ and $\supp(\tau)$ are suitably smaller than $n$ (see \Cref{general_decomposibility_l(2mod3)_k_even}, \Cref{decomposability_lemma_k=3(mod 4)_l_odd_2(mod3)}, \Cref{decomposability_lemma_k=1(mod 4)_l_odd_2(mod3)}).  To do such a decomposition, we  use a simple technique of approximation using small length cycles. Recall that for $\sigma \in S_n$, $n_i:=n_{i}(\sigma)$ denote the number of $i$-cycles in $\sigma$. We have the following inequality
	\begin{equation}\label{cycleinequality}
	2n_2+3n_3+\cdots +in_i + (i+1)(n_{\sigma}-n_2-n_3-\cdots-n_i)\leq m_{\sigma}.
	\end{equation}
	Suppose now that for $2\leq t\leq i$, $n_t\leq a_t$ except for some natural number $j$ in between $2$ and $i$. Here $i\geq 2$ and $a_t$'s are some natural numbers. Further, if $m_{\sigma}\leq n$ for some natural number $n$ and $n_{\sigma}\geq b$, then using \Cref{cycleinequality} we can obtain a least estimate for $n_j$ as follows:
	\begin{equation}\label{cycle_bound_inequality}
	(i-j+1)n_j\geq (i+1)n_{\sigma}-m_{\sigma}-\sum_{\substack{2\leq t\leq i \\ t\neq j}}(i+1-t)n_t\geq (i+1)b-n-\sum_{\substack{2\leq t\leq i \\ t\neq j}}(i+1-t)a_t.
	\end{equation}
	
	\noindent We will use this estimate above to conclude that there are enough small length cycles in $dcd*(\sigma)$ (since $n_{\sigma}$ is \emph{``large enough''}). We then use these cycles to construct a suitable decomposition of $\sigma$. In fact, cycles of length two and three will suffice. This establishes that the desired value of $n(k,l)$ is indeed a lower bound. 
	
	\medskip
	
	To show that it is also an upperbound, we construct suitable permutations in $A_{n+1}$ which do not belong to $\p(k,l;n+1)$ (see \Cref{UB_for_l=1(mod 3)}, \Cref{UB_for_odd_l}, \Cref{UB_for_l_odd_k_odd_l=2(mod3)}). In this case, we show that such a permutation is neither $k$-decomposable, nor it satisfies the inequalities in \Cref{indecomposable}, thereby showing that they cannot be written as a product of $k$ many $l$-cycles.

	
	\section{Exact value of $n(k,l)$ when $l\equiv 1\;(\Mod\;3)$}\label{Proof_of_main_theorem_l=1(mod 3)}
	
	In this section we prove \Cref{maintheorem} in the case when $l\equiv1\;(\Mod\;3)$. We start by proving that the desired value of $n(k,l)$ in this case is a lower-bound.
	
	\begin{proposition}\label{lowerbound_n(k,l)_l=1(mod 3)}
		Let $k\geq 5$ and $l>3$ be such that  $l\equiv 1\;(\Mod\;3)$. Then, $n(k,l)\geq \frac{2k(l-1)}{3}+2$.
	\end{proposition}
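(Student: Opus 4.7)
The plan is to fix $n = \frac{2k(l-1)}{3}+2$ and show $A_n \subseteq \p(k,l;n)$ via a short case split on $n_\sigma$; notably, no inductive decomposition argument is needed in this congruence class of $l$.

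First I would record the universal estimate $m_\sigma \geq 2 n_\sigma$, valid because every cycle in $dcd*(\sigma)$ has length at least $2$. Combined with $m_\sigma \leq n$, this yields $n_\sigma \leq n/2 = \frac{k(l-1)}{3}+1$. Thus the range $n_\sigma > \frac{k(l-1)}{3}$, which falls outside the scope of \Cref{bounded_part_partitions_l=1(mod 3)}, contains the single integer value $n_\sigma = \frac{k(l-1)}{3}+1$.

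In the case $n_\sigma \leq \frac{k(l-1)}{3}$, I would apply \Cref{bounded_part_partitions_l=1(mod 3)} directly to conclude $\sigma \in \p(k,l;n)$. In the remaining case $n_\sigma = \frac{k(l-1)}{3}+1$, the chain of inequalities collapses into $m_\sigma = 2 n_\sigma = n$, forcing every cycle of $\sigma$ to have length exactly $2$. So $\sigma$ is a product of $n_\sigma$ disjoint transpositions and has sign $(-1)^{n_\sigma}$. Writing $l = 3m+1$ so that $n_\sigma = km+1$, I would then observe that the standing parity hypothesis of \Cref{maintheorem} ($k$ even, or $l$ odd which forces $m$ to be even) makes $km$ even; hence $n_\sigma$ is odd and $\sigma$ is an odd permutation. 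This contradicts $\sigma \in A_n$, so this case does not occur.

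The main obstacle, modest as it is, is to notice that the arithmetic of $n = 2\bigl(\tfrac{k(l-1)}{3}+1\bigr)$ pins the single ``bad'' value of $n_\sigma$ to the all-transpositions configuration, and that the parity hypothesis eliminates exactly this configuration. In contrast to the $l \equiv 2 \;(\Mod\; 3)$ case, where the analogous upper bound on $n_\sigma$ leaves genuine room for many-cycle $\sigma$'s so that the inductive $\sigma = \rho\tau$ decomposition of the stated strategy is essential, here no decomposition and no induction on $k$ is required.
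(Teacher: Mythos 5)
Your proposal is correct and follows essentially the same route as the paper: both bound $n_\sigma\le \frac{k(l-1)}{3}+1$, invoke \Cref{bounded_part_partitions_l=1(mod 3)} for $n_\sigma\le\frac{k(l-1)}{3}$, and eliminate the boundary value $n_\sigma=\frac{k(l-1)}{3}+1$ by a parity contradiction (the paper phrases it as $m_\sigma+n_\sigma=k(l-1)+3$ being odd, which is equivalent to your all-transpositions sign computation).
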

	
	\begin{proof}
		Let $n=\frac{2k(l-1)}{3}+2$ and $\sigma \in A_n$. Note that $n_{\sigma}\leq \frac{m_{\sigma}}{2}\leq \frac{k}{3}(l-1)+1.$ If $n_{\sigma}=\frac{k}{3}(l-1)+1$, then $m_{\sigma}=\frac{2k(l-1)}{3}+2$. Thus $m_{\sigma}+n_{\sigma}=k(l-1)+3$ is odd, since either $k$ is even or $l$ is odd. This contradicts that $\sigma \in A_n$. We conclude that $n_{\sigma}\leq \frac{k}{3}(l-1)$. Now it follows that $\sigma \in \p(k,l;n)$ by \Cref{bounded_part_partitions_l=1(mod 3)} and we are done.
	\end{proof}
	
	Now we construct counterexamples in $A_{\frac{2k(l-1)}{3}+3}$, thereby showing that $n(k,l)\leq \frac{2k(l-1)}{3}+2.$ 

	\begin{proposition}\label{UB_for_l=1(mod 3)}
		Let $k\geq 2$ and $l>3$ be such that $l\equiv 1\;(\Mod\; 3)$. Let $n=\frac{2k(l-1)}{3}+3$. 
		\begin{enumerate}
			\item Suppose $k$ is even. Let $\sigma \in A_n$ be such that $\sigma$ is a disjoint product of $\frac{n-3}{2}$ many $2$-cycles and one $3$-cycle. Then $\sigma \notin \p(k,l;n)$. 
			\item Suppose $k$ is even. Let $\sigma \in A_{n+1}$ be such that and $\sigma$ 
			is a disjoint product of $\frac{n+1}{2}$ many $2$-cycles. Then $\sigma \notin \p(k,l;n+1)$. 
			\item Suppose $k$ is odd. Let $\sigma \in S_n$ be such that $\sigma$ is a disjoint product of $\frac{n-3}{2}$ many $2$-cycles and one $3$-cycle. Then $\sigma \notin \p(k,l;n).$ 
			\item Suppose $k$ is odd. Let $\sigma \in S_{n+1}$ be such that $\sigma$ is a disjoint product of $\frac{n+1}{2}$ many $2$-cycles. Then $\sigma \notin \mathcal{P}(k,l;n+1).$ 
		\end{enumerate}
	In particular, $n(k,l)\leq \frac{2k(l-1)}{3}+2$.	
	\end{proposition}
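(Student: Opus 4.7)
The plan is to treat the four cases uniformly. Write $N=n$ in parts (1),(3) and $N=n+1$ in parts (2),(4), and set $t:=(l-1)/3$. A direct computation gives
\begin{align*}
m_\sigma+n_\sigma &= k(l-1)+4,\quad kl-m_\sigma-n_\sigma=k-4\quad \text{in (1),(3),}\\
m_\sigma+n_\sigma &= k(l-1)+6,\quad kl-m_\sigma-n_\sigma=k-6\quad \text{in (2),(4).}
\end{align*}
When $k\le 3$ in (1),(3) or $k\le 5$ in (2),(4), one has $m_\sigma+n_\sigma>kl$, so \Cref{general_product_condition} immediately forces $\sigma\notin\p(k,l;N)$. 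For the remaining (larger) values of $k$, the strategy is to prove $\sigma$ is $k$-indecomposable and then invoke \Cref{indecomposable}, which would force $kl-m_\sigma-n_\sigma\ge k-2$ if $\sigma$ were in $\p(k,l;N)$ --- contradicting the values $k-4$ and $k-6$ above.

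To establish $k$-indecomposability I would argue by contradiction: assume $\sigma=\rho_1\cdots\rho_s$ is a disjoint factorisation into $k_i$-indecomposable blocks with $\rho_i\in\p(k_i,l;N)$, $\sum k_i=k$, $k_i\ge 2$ and $s\ge 2$. Let $a_i$ denote the number of $2$-cycles in $dcd*(\rho_i)$ and $\epsilon_i\in\{0,1\}$ indicate whether the unique $3$-cycle of $\sigma$ lies in $\rho_i$; then $\sum\epsilon_i=1$ in (1),(3) and $\sum\epsilon_i=0$ in (2),(4), and $m_{\rho_i}+n_{\rho_i}=3a_i+4\epsilon_i$. Since $dcd*(\rho_i)\subseteq dcd*(\sigma)$ contains no $l$-cycle ($l>3$), \Cref{product_of_two_cycles} (for $k_i=2$), \Cref{ree_corollary} with $T_i=1$ (for $k_i=3$; transitivity is forced, since otherwise an $l$-cycle would appear in $dcd*(\rho_i)$) and \Cref{indecomposable} (for $k_i\ge 4$) each yield the uniform bound
\[
3a_i+4\epsilon_i\;=\;m_{\rho_i}+n_{\rho_i}\;\le\;k_i(l-1)+2.
\]

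The key step is a mod-$3$ sharpening. Since $l\equiv 1\pmod 3$, the right-hand side is $\equiv 2\pmod 3$ while $3a_i+4\epsilon_i\equiv\epsilon_i\pmod 3$, so the inequality tightens to $a_i\le k_it-\epsilon_i$. Summing over $i$ yields $\sum a_i\le kt-\sum\epsilon_i$. In (1),(3) however $\sum a_i=(n-3)/2=kt$ and $\sum\epsilon_i=1$, forcing the absurdity $kt\le kt-1$; in (2),(4) we have $\sum a_i=(n+1)/2=kt+2$ and $\sum\epsilon_i=0$, forcing $kt+2\le kt$. Either way $\sigma$ is $k$-indecomposable, completing the argument; the concluding inequality $n(k,l)\le \frac{2k(l-1)}{3}+2$ then follows at once from parts (1) or (3). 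The main obstacle is precisely this mod-$3$ refinement: neither the bare bound $m_{\rho_i}+n_{\rho_i}\le k_il$ from \Cref{general_product_condition} nor the sharper $k_i(l-1)+2$ alone rules out decomposability (both are compatible with any $s\ge 2$); one has to exploit simultaneously that $3\mid k_i(l-1)$ and that $3a_i+4\epsilon_i$ has residue $\epsilon_i$ modulo $3$ in order to harvest the extra unit that makes the counting strict.
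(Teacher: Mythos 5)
Your proof is correct, but it takes a genuinely different route from the paper's. The paper proves all four statements by a simultaneous induction on $k$: the base cases $k=2,3$ follow from \Cref{general_product_condition}, and in the inductive step a putative splitting $\sigma=\rho\tau$ is refuted by normalizing which factor carries the $3$-cycle (or the $4$-cycle-free part), running a mod-$4$ congruence argument on $m_\rho$ and $m_\tau$, and invoking the induction hypothesis together with \Cref{adding_even_many_two_cycles} to show one factor cannot lie in the relevant $\p(k',l;\cdot)$. Your argument is direct and non-inductive: you refine any putative decomposition into indecomposable blocks, extract the uniform per-block bound $m_{\rho_i}+n_{\rho_i}\le k_i(l-1)+2$ (from \Cref{product_of_two_cycles} for $k_i=2$, from \Cref{ree_corollary} with $T=1$ for $k_i=3$, and from \Cref{indecomposable} for $k_i\ge 4$ --- and the forced transitivity for $k_i=3$ is argued correctly, since a singleton orbit would put an $l$-cycle into $dcd*(\rho_i)$), and then exploit $3\mid(l-1)$ to sharpen each bound to $a_i\le k_it-\epsilon_i$ before summing; the arithmetic checks out in all four cases, and the blocks with $k_i\in\{2,3\}$ are legitimately terminal since $(k',k-k')$-decomposability requires $k\ge 4$. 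What your approach buys is brevity and uniformity: it dispenses with the induction, with \Cref{adding_even_many_two_cycles}, and with the mod-$4$ bookkeeping, and it even yields full $k$-indecomposability in case (2) where the paper settles for even-$k$-indecomposability. What the paper's heavier machinery buys is reusability: the same inductive template (and \Cref{adding_even_many_two_cycles}) is deployed again in Sections 4 and 5 for $l\equiv 2\pmod 3$, where $3\nmid(l-1)$ and your mod-$3$ harvesting of the extra unit is no longer available in the same form.
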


	\begin{proof}
		We prove the result by induction on $k$. For $k=2$, we prove (1) and (2). We have $n=\frac{4(l-1)}{3}+3=\frac{4l+2}{3}+1$. Let $\sigma \in A_{n}$ be a disjoint product of $\frac{2(l-1)}{3}$ many $2$-cycles and one $3$-cycle. Clearly $m_{\sigma}=\frac{4l+2}{3}+1$ and $n_{\sigma}=\frac{2(l-1)}{3}+1$. We have $2l-m_{\sigma}-n_{\sigma}=2l-\frac{4l+2}{3}-\frac{2l-2}{3}-2=-2 <0$. 
		Thus $\sigma \notin \mathcal{P}(2,l;n)$ by \Cref{general_product_condition}.  
		Now suppose $\sigma \in A_{n+1}$ be a disjoint product of $\frac{2(l-1)}{3}+2$ many $2$-cycles. Then $m_{\sigma}=\frac{4l+2}{3}+2$. We have $2l-m_{\sigma}-n_{\sigma}=2l-\frac{4l+2}{3}-\frac{2l-2}{3}-4=-4 <0$. 
		Thus $\sigma \notin \mathcal{P}(2,l;n)$ by \Cref{general_product_condition}.

		We now prove statement (3) and (4) for $k=3$. Let $n=\frac{6(l-1)}{3}+3=2l+1$. 
		Let $\sigma \in S_{n}$ be a disjoint product of $l-1$ many $2$-cycles and one $3$-cycle. Then, $3l-m_{\sigma}-n_{\sigma}=3l-2l-1-l=-1 <0$. 
		Thus $\sigma \notin \mathcal{P}(k,l;n)$ by \Cref{general_product_condition}. 
		Now let $\sigma \in S_{n+1}$ be a disjoint product of $l+1$ many $2$-cycles. 
		Then, $3l-m_{\sigma}-n_{\sigma}=3l-2l-2-l-1=-3<0$. 
		Thus $\sigma \notin \mathcal{P}(k,l;n)$ by \Cref{general_product_condition}.
		
		Let us assume that the result holds for every $k'$ such that $2\leq k'\leq k-2$. We show that the result holds for $k$. For any $k\geq 4$, we first note the following:
		
		\begin{enumerate}[label=(\alph*)]
			\item If $\sigma$ is as in (1), then $kl-m_{\sigma}-n_{\sigma}= kl-\frac{2k(l-1)}{3}-3-\frac{k(l-1)}{3}-1=k-4< k-2$.
			\item If $\sigma$ is as in (2), then $kl-m_{\sigma}-n_{\sigma}= kl-\frac{2k(l-1)}{3}-4-\frac{k(l-1)}{3}-2=k-6<k-4$.
			\item If $\sigma$ is as in (3), then $kl-m_{\sigma}-n_{\sigma}= kl-\frac{2k(l-1)}{3}-3-\frac{k(l-1)}{3}-1=k-4< k-2$.
			\item If $\sigma$ is as in (4), then $kl-m_{\sigma}-n_{\sigma}= kl-\frac{2k(l-1)}{3}-4-\frac{k(l-1)}{3}-2=k-6<k-2$.
		\end{enumerate}
		
		\noindent By \Cref{indecomposable}, to show that $\sigma$ cannot be written as a product of $k$-many $l$-cycles, it is enough to prove that in the case (1), (3), and (4) $\sigma$ is $k$-indecomposable and in case (2), $\sigma$ is even-$k$-indecomposable. We now show this in each case.
	    \medskip
		
	    \textbf{(1)} Assume on the contrary that there exist two disjoint permutations $\rho, \tau \in S_n$ such that $\sigma=\rho \tau$,  $\rho \in \p(k',l;m_{\rho})$ and $\tau \in \p(k-k',l; m_{\tau})$ for some $k'$, $2\leq k' \leq k-2$. 
		
		\medskip
		
		\textbf{\underline{Case A}:} Assume that $k'$ and $k-k'$ are even. Then $\rho,\tau \in A_n$. WLOG we may assume that $\rho$ is a disjoint product of $2$-cycles only. If $m_{\rho}\geq \frac{2k'(l-1)}{3}+4$ then $\rho \notin \p(k,l;m_{\rho})$ by induction hypothesis $(2)$ and \Cref{adding_even_many_two_cycles} (since (b) holds), a contradiction. Now let $m_{\rho}<\frac{2k'(l-1)}{3}+4$. Then $m_{\tau}=m_{\sigma}-m_{\rho}>\frac{2k(l-1)}{3}+3-\frac{2k'(l-1)}{3}-4.$ 
		Thus $m_{\tau}>\frac{2(k-k')(l-1)}{3}-1$. Since $m_{\tau}\equiv 3\;(\Mod\;4)$ and  $\frac{2(k-k')(l-1)}{3} \equiv 0\;(\Mod\;4)$, 
		we have $m_{\tau} \geq \frac{2(k-k')(l-1)}{3}+3.$ Thus, by induction hypothesis $(1)$ and \Cref{adding_even_many_two_cycles} (since (a) holds), we conclude that $\tau \notin \p(k-k',l;m_{\tau})$, once again a contradiction.
		
		\medskip
		
		\textbf{\underline{Case B}:} Assume that $k'$ and $k-k'$ are odd. WLOG we may assume that $\rho $ is a disjoint product of $2$-cycles only. If $m_{\rho}\geq \frac{2k'(l-1)}{3}+4$, then $\rho \notin \p(k',l;m_{\rho})$ by induction hypothesis $(4)$ and \Cref{adding_even_many_two_cycles} (since (d) holds), a contradiction. 
		Now let $m_{\rho}<\frac{2k'(l-1)}{3}+4$. Then $m_{\tau}=m_{\sigma}-m_{\rho}>\frac{2k(l-1)}{3}+3-\frac{2k'(l-1)}{3}-4.$ 
		Thus $m_{\tau}>\frac{2(k-k')(l-1)}{3}-1$. Further suppose that $l$ is odd. Note that $\rho, \tau \in A_n$. We have $m_{\tau}\equiv 3\;(\Mod\;4)$ and $\frac{2(k-k')(l-1)}{3}-1 \equiv 3\;(\Mod\;4)$. We have $m_{\tau} \geq \frac{2(k-k')(l-1)}{3}+3$. Now suppose that $l$ is even. Note that $\rho, \tau \in S_n\setminus A_n$. We have $m_{\tau}\equiv 1\;(\Mod\;4)$. Since $\frac{2(k-k')(l-1)}{3}-1 \equiv 1\;(\Mod\;4)$, it follows that $m_{\tau} \geq \frac{2(k-k')(l-1)}{3}+3$. Thus, by induction hypothesis $(3)$ and \Cref{adding_even_many_two_cycles} (since (c) holds), we conclude that $\tau \notin \mathcal{P}(k-k',l;n)$, once again a contradiction. Thus we conclude that $\sigma$ is $k$-indecomposable.

		\medskip
		
		\textbf{(2)}  On the contrary, suppose that there exist two disjoint $\rho, \tau \in A_{n+1}$ such that  $\rho \in \p(k',l;m_{\rho})$ and $\tau \in \p(k-k',l;m_{\tau})$ 
		for some even $k'$, $2\leq k' \leq k-2$.  If $m_{\rho}\geq \frac{2k'(l-1)}{3}+4$ then $\rho \notin \p(k,l;m_{\rho})$ by induction hypothesis $(2)$ and \Cref{adding_even_many_two_cycles} (since (b) holds), a contradiction. Thus we assume $m_{\rho}<\frac{2k'(l-1)}{3}+4$. Then $m_{\tau}=m_{\sigma}-m_{\rho}>\frac{2k(l-1)}{3}+4-\frac{2k'(l-1)}{3}-4$. Thus $m_{\tau}>\frac{2(k-k')(l-1)}{3}$. Since $m_{\tau}\equiv 0\;(\Mod\;4)$ and $\frac{2(k-k')(l-1)}{3} \equiv 0\;(\Mod\;4)$, 
		we have $m_{\tau} \geq \frac{2(k-k')(l-1)}{3}+4.$ Thus, by induction hypothesis (2) and \Cref{adding_even_many_two_cycles} (since (b) holds), we conclude that $\tau \notin \p(k-k',l;m_{\tau})$. Once again we get a contradiction and conclude that $\sigma$ is even-$k$-indecomposable.
		
		\medskip
		
		\textbf{(3)} On the contrary, suppose that there exist two disjoint permutations $\rho, \tau \in S_n$ such that $\sigma=\rho \tau$, and 
		$\rho \in P(k',l;m_{\rho})$ and $\tau \in P(k-k',l;m_{\tau})$ 
		for some $k'$, $2\leq k' \leq k-2$. WLOG we may assume that $k'$ is odd and $k-k'$ is even.
		
		\medskip
		
		\textbf{\underline{Case A}:} Assume that $\rho$ is a disjoint product of $2$-cycles only. If $m_{\rho}\geq \frac{2k(l-1)}{3}+4$, then by induction hypothesis $(4)$ and 
		\Cref{adding_even_many_two_cycles} (since (d) holds), $\rho \notin \p(k',l;m_{\rho})$, a contradiction. Thus we assume $m_{\rho}< \frac{2k(l-1)}{3}+4$. Then $m_{\tau}=m_{\sigma}-m_{\rho}>\frac{2(k-k')(l-1)}{3}-1$. Note that $m_{\tau}\equiv 3\;(\Mod\;4)$ and $\frac{2(k-k')(l-1)}{3} \equiv 0\;(\Mod\;4)$. Thus $m_{\tau}\geq \frac{2(k-k')(l-1)}{3}+3$. 
		Thus we have $\tau \notin \p(k-k',l;m_{\tau})$ by induction hypothesis $(1)$ and \Cref{adding_even_many_two_cycles} (since (a) holds). This is once again a contradiction.
		
		\medskip
		
		\textbf{\underline{Case B}:} Assume that $\rho$ is a disjoint product of one $3$ cycle and $n_{\rho}-1$ many $2$-cycles. If $m_{\rho}\geq \frac{2k'(l-1)}{3}+3$ then $\rho \notin \p(k',l;m_{\rho})$ by induction hypothesis $(3)$ and \Cref{adding_even_many_two_cycles} (since (c) holds), a contradiction. Suppose that $m_{\rho}< \frac{2k'(l-1)}{3}+3$. Then $m_{\tau}=m_{\sigma}-m_{\rho}>\frac{2(k-k')(l-1)}{3}$. 
		Note that $m_{\tau} \equiv 0\;(\Mod\;4)$ and $ \frac{2(k-k')(l-1)}{3} \equiv 0\;(\Mod\;4)$. Thus $m_{\tau} \geq \frac{2(k-k')(l-1)}{3}+4$ and $\tau \notin \p(k-k',l;m_{\tau})$ by induction hypothesis $(2)$  and \Cref{adding_even_many_two_cycles} (since (b) holds). We get a contradiction in this case too. We conclude that $\sigma$ is $k$-indecomposable.
		
		\medskip
		
		\textbf{(4)} Assume on the contrary that there exist disjoint permutations $\rho, \tau \in S_n$ such that $\sigma=\rho \tau$, $\rho \in P(k',l;m_{\rho})$ and $\tau \in P(k-k',l;m_{\tau})$ for some $k'$, $2\leq k' \leq k-2$. WLOG we may assume that $k'$ is odd and $k-k'$ is even. If $m_{\rho}\geq \frac{2k'(l-1)}{3}+4$, then $\rho\notin \p(k',l;m_{\rho})$ by induction hypothesis $(4)$ and  \Cref{adding_even_many_two_cycles} (since (d) holds), a contradiction.
		Assume that $m_{\rho}<\frac{2k'(l-1)}{3}+4$. Then $m_{\tau}=m_{\sigma}-m_{\rho}>\frac{2(k-k')(l-1)}{3}$. Note that $m_{\tau}\equiv 0\;(\Mod\;4)$ and $\frac{2(k-k')(l-1)}{3}\equiv 0\;(\Mod\;4)$. Thus $m_{\tau} \geq \frac{2(k-k')(l-1)}{3}+4$ and $\tau \notin \p(k-k',l;m_{\tau})$ by induction hypothesis $(2)$ and \Cref{adding_even_many_two_cycles} (since (b) holds). Thus we get a contradiction once again and conclude that $\sigma$ is $k$-indecomposable.
		
		\medskip
		
		The fact that $n(k,l)\leq \frac{2k(l-1)}{3}+2$ follows from (1) and from (3), since if $\sigma$ is as in (3) then $\sigma \in A_n$ when $l$ is odd.
		
	\end{proof}
	
	\noindent From \Cref{newvalues_k=2_k=3_k=4}, Proposition~\ref{lowerbound_n(k,l)_l=1(mod 3)} and \Cref{UB_for_l=1(mod 3)}, we conclude the following:
	$$\textbf{\Cref{maintheorem} holds when $k\geq 2$ and  $l\equiv 1\;(\Mod\; 3)$.}$$
	
	
	\section{Exact value of $n(k,l)$ when $k$ is even and $l\equiv 2\;(\Mod\;3)$}\label{Proof_of_main_theorem_k_even_l=2(mod 3)}
	
	In this section we prove \Cref{maintheorem} when $k$ is even and $l\equiv 2\;(\Mod\; 3)$. 
	
	\begin{lemma}\label{general_decomposibility_l(2mod3)_k_even}
		Let $k\geq 6$ be even and $l>3$ be such that $l\equiv 2\;(\Mod\; 3)$. Let $\sigma \in A_{\frac{k(2l-1)}{3}+1}$ be such that $n_{\sigma}\geq \frac{k(l-2)}{3}+2$. Then, $\sigma$ can be written as a product of two disjoint permutations $\rho$ and $\tau$ such that one of the following holds:
		\begin{enumerate}
			\item  $\rho\in A_{\frac{4l+1}{3}-\epsilon}$ and $\tau \in A_{\frac{(k-2)(2l-1)}{3}+\epsilon},$
			
			\item $\rho \in A_{\frac{8l-1}{3}-\epsilon}$ and $\tau \in A_{\frac{(k-4)(2l-1)}{3}+\epsilon},$
		\end{enumerate}
		where $\epsilon \in \{0,1\}$.
	\end{lemma}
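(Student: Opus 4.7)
The plan is to construct $\rho$ as a disjoint union of some cycles from $dcd*(\sigma)$, chosen so that the support size falls in the required target range and $\rho$ is even; the leftover cycles of $\sigma$ then form $\tau$. The key preliminary is to establish that $\sigma$ contains many short cycles. Applying the cycle-counting inequality \eqref{cycle_bound_inequality} with $i = 2$ to the hypotheses $m_\sigma \leq \frac{k(2l-1)}{3}+1$ and $n_\sigma \geq \frac{k(l-2)}{3}+2$ gives
\[
n_2(\sigma) \;\geq\; 3n_\sigma - m_\sigma \;\geq\; \tfrac{k(l-5)}{3} + 5,
\]
which, since $l \geq 5$ and $k \geq 6$, supplies an ample reserve of $2$-cycles to work with.

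Writing $l = 3m+2$ with $m \geq 1$, I would then split into cases according to whether $n_3(\sigma) \geq 1$. If $\sigma$ has at least one $3$-cycle, take $\rho$ to be that $3$-cycle together with any $2m$ of the $2$-cycles of $\sigma$; the lower bound above ensures enough $2$-cycles are available. Then $m_\rho = 4m+3 = \frac{4l+1}{3}$ and $n_\rho = 2m+1$, so $m_\rho + n_\rho$ is even and $\rho \in A_{\frac{4l+1}{3}}$, yielding Case (1) with $\epsilon = 0$; the residue $\tau$ satisfies $m_\tau = m_\sigma - (4m+3) \leq \frac{(k-2)(2l-1)}{3}$, which is the required bound. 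If $\sigma$ has no $3$-cycle, the odd target support sizes cannot be achieved from $2$-cycles alone, so we pass to $\epsilon = 1$. The cleanest subcase is when $\sigma$ consists entirely of $2$-cycles: here Case (2) with $\epsilon = 1$ applies, taking $\rho$ to be any $4m+2$ of the $2$-cycles, so that $m_\rho = 8m+4$ and $n_\rho = 4m+2$ are both even, and $n_2(\sigma) \geq km+2 \geq 4m+2$ guarantees the required supply. The remaining subcase, in which $\sigma$ has no $3$-cycle but contains some cycle of length $\geq 4$, is handled by absorbing one such longer cycle into $\rho$ alongside an appropriate number of $2$-cycles: the parity of the longer cycle's length determines whether one uses Case (1) or Case (2) and which value of $\epsilon$ to pick.

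The main obstacle will be to organize the case analysis so that every admissible cycle structure of $\sigma$ is covered, and to verify in each subcase that the chosen $\rho$ simultaneously satisfies (a) the support-size constraint (using the lower bound on $n_2(\sigma)$ above) and (b) the evenness constraint (using the flexibility in $\epsilon \in \{0,1\}$ and the choice between Cases (1) and (2)). The bound on $m_\tau$ will follow each time from a direct arithmetic check against $m_\sigma \leq \frac{k(2l-1)}{3}+1$, exactly as in the first case.
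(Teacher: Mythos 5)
Your overall strategy is the same as the paper's: build $\rho$ out of whole cycles of $dcd*(\sigma)$, using the counting inequality to guarantee a supply of $2$-cycles, and let $\tau$ be the product of the remaining cycles. Your first two subcases are fine (and your use of the level-$2$ inequality $n_2\geq 3n_\sigma-m_\sigma$ neatly avoids the paper's separate treatment of the case where $n_3$ is large). The problem is the third subcase, where $n_3=0$ and some cycle of length at least $4$ is present; as written this is a genuine gap. Write $l=3M+2$. Your bound $n_2\geq \frac{k(l-5)}{3}+5=k(M-1)+5$ degenerates to $n_2\geq 5$ when $l=5$, which is one short of the $4M+2=6$ two-cycles you would need to fall back on the all-$2$-cycle construction. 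And the fallback you actually propose, absorbing one longer cycle into $\rho$, does not work in general: a cycle of length $c\geq 4$ can have $c$ as large as $m_\sigma-2(n_\sigma-1)\leq k-1$, which for small $M$ and large $k$ far exceeds the maximal admissible support $\frac{8l-1}{3}=8M+5$, so the cycle simply cannot fit inside $\rho$; and even when it fits, the simultaneous constraints on $m_\rho$ and on the parity of $\rho$ can be unsatisfiable (e.g.\ for $M=1$ and $c=8$ the only even target is $8M+4=12$, forcing two additional $2$-cycles and hence an odd $\rho$).

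The correct move, which is what the paper does, is to never touch the longer cycles: once $n_3=0$, rerun the counting inequality at level $i=3$ to get $2n_2\geq 4n_\sigma-m_\sigma\geq 4(kM+2)-(2kM+k+1)=k(2M-1)+7\geq 12M+1$, hence $n_2\geq 6M+1\geq 4M+2$. So $4M+2$ disjoint $2$-cycles are always available, giving $\rho\in A_{\frac{8l-1}{3}-1}$ and case (2) with $\epsilon=1$ exactly as in your ``all $2$-cycles'' subcase; the cycles of length $\geq 4$ are simply left in $\tau$. With that replacement your argument closes; the remaining bookkeeping (evenness of $\rho$ and $\tau$, and the bound $m_\tau\leq\frac{(k-2)(2l-1)}{3}+\epsilon$ or $\frac{(k-4)(2l-1)}{3}+\epsilon$) is routine and you have verified it correctly in the cases you did treat.
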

	
	\begin{proof}
		We write $l=3M+2$ where $M\geq 1$. Note that $m_{\sigma}\leq \frac{k(2l-1)}{3}+1=k(2M+1)+1$ and $n_{\sigma}\geq kM+2.$ We also have $\frac{4l+1}{3}=4M+3$ and $\frac{8l-1}{3}=8M+5$. Note that we will write $n_i$ to mean $n_i(\sigma)$ throughout the proof. Now we construct $\rho$ by defining it to be a certain product of cycles occurring in $dcd*(\sigma)$. Note that it is enough to construct $\rho$ in the desired $A_n$ as in (1) or (2) as $\tau$ can then be chosen such that $dcd*(\tau)=dcd*(\sigma)\setminus dcd*(\rho).$
		
		Observe that one of $4M+2, 4M+3,8M+5$ is divisible by 3. Let $\alpha$ be the number among them  which is divisible by 3. Thus if $n_3\geq \frac{8M+5}{3}$, we can choose $\rho$ to be the product of $\alpha$ many 3-cycles from $dcd*(\sigma)$ when $\alpha$ is either $4M+2$ or $4M+3$, whence it follows that $\rho\in A_{\frac{4l+1}{3}-\epsilon}$ where $\epsilon=1$ and $\epsilon=0$ respectively. If $\alpha=8M+5$, we can choose $\rho$ to be the product of $\alpha$ many $3$-cycles from $dcd*(\sigma)$, whence $\rho \in A_{\frac{8l-1}{3}}$. So assume that $n_3\leq \frac{8M+5}{3}$. Now we get a least estimate for $n_2$ using \Cref{cycle_bound_inequality}. Putting $j=2$ and $i=3$ in \Cref{cycle_bound_inequality}, we get
		\begin{equation}\label{eqn_prop3.3_1}
			2n_2 \geq 4n_{\sigma}-m_{\sigma}-n_3\geq 4(kM+2)-(2kM+k+1)-\frac{8M+5}{3}
		\end{equation}
		$\displaystyle \hspace{2.7 cm}= \frac{k(6M-3)-8M+16}{3}\geq \frac{28M-2}{3}$.
		
		\vspace{1 mm}
		\noindent The last inequality follows since $M\geq 1$ and $k\geq 6$. This implies that $n_2\geq \frac{14M-1}{3}$. Note that $\frac{14M-1}{3}\geq 2M$. If we assume that $n_3\geq 1$, then we can choose $\rho$ to be the product of $2M$ many 2-cycles and a 3-cycle from $dcd*(\sigma)$, whence $\rho \in A_{\frac{4l+1}{3}}$. Thus we now assume that $n_3=0$. Doing the same computation as in \Cref{eqn_prop3.3_1}, with $n_3=0$, we get an improved least estimate of $n_2$ as follows:
		$$2n_2\geq 4n_{\sigma}-m_{\sigma}-n_3\geq 4(kM+2)-2kM-k-1= k(2M-1)+7\geq 12M+1.$$
		The last inequality follows since $M\geq 1$ and $k\geq 6$.
		This implies $n_2\geq 6M+1$. Thus, we can choose $\rho$ to be the product of $4M+2$ many 2-cycles, whence $\rho \in A_{\frac{8l-1}{3}-1}.$ The proof is now complete.
	\end{proof}
	
	\begin{proposition}\label{n(k,l_lowerbound_k_even_l(2 mod 3)}
		Let $l>3$ and $l\equiv 2\;(\Mod\;3)$. Let $k\geq 2$ be even. Then $n(k,l)\geq \frac{k(2l-1)}{3}+1$.
	\end{proposition}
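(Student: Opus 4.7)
The plan is to prove this lower bound by induction on the even integer $k$, feeding the decomposition lemma just established into the inductive step. The base cases $k=2$ and $k=4$ are contained in \Cref{newvalues_k=2_k=3_k=4}, which gives $n(2,l) = \frac{4l+1}{3} = \frac{2(2l-1)}{3}+1$ and $n(4,l) = \frac{8l-1}{3} = \frac{4(2l-1)}{3}+1$ when $l\equiv 2\;(\Mod\;3)$ and $l>3$. Both match the claimed bound exactly, so these serve as the seeds for the induction.

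For the inductive step, fix an even $k \geq 6$, assume the statement holds for all smaller even values of $k$, set $n = \frac{k(2l-1)}{3}+1$, and take an arbitrary $\sigma \in A_n$. I would split on the size of $n_\sigma$. If $n_\sigma \leq \frac{k(l-2)}{3}+1$, then \Cref{bounded_part_partitions_l=2(mod 3)} closes this case immediately; no further work is required in the ``few cycles'' regime.

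The interesting regime is $n_\sigma \geq \frac{k(l-2)}{3}+2$, which is exactly the hypothesis of \Cref{general_decomposibility_l(2mod3)_k_even}. Applying that lemma decomposes $\sigma = \rho\tau$ with disjoint supports into one of two configurations parameterized by $\epsilon \in \{0,1\}$. In configuration (1), the support of $\rho$ lies in a set of size at most $\frac{4l+1}{3} = n(2,l)$ and the support of $\tau$ lies in a set of size at most $\frac{(k-2)(2l-1)}{3}+1 = n(k-2,l)$, the latter equality being the inductive hypothesis; thus $\rho$ factors as a product of $2$ many $l$-cycles and $\tau$ as a product of $k-2$ many $l$-cycles, and since these cycles act on disjoint subsets of $\{1,\ldots,n\}$ they concatenate to place $\sigma$ in $\p(k,l;n)$. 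Configuration (2) is identical in shape with $(2,k-2)$ replaced by $(4,k-4)$; it invokes $n(4,l)$ as a base case and $n(k-4,l)$ from the inductive hypothesis, which is legitimate since $k-4 \geq 2$ is even.

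The main technical obstacle has effectively been absorbed into \Cref{general_decomposibility_l(2mod3)_k_even}: producing the right factorization forces one to extract enough $2$-cycles or $3$-cycles from $dcd*(\sigma)$ using the counting estimate \eqref{cycle_bound_inequality}. Granted that lemma, the present argument reduces to the routine bookkeeping above, namely matching the sizes $\frac{4l+1}{3}-\epsilon$ and $\frac{8l-1}{3}-\epsilon$ against $n(2,l)$ and $n(4,l)$, and confirming that the size on the $\tau$ side never exceeds the inductive bound $n(k-2,l)$ or $n(k-4,l)$.
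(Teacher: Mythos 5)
Your proposal is correct and follows essentially the same route as the paper: induction on even $k$ with base cases $k=2,4$ from \Cref{newvalues_k=2_k=3_k=4}, the case $n_\sigma\leq \frac{k(l-2)}{3}+1$ handled by \Cref{bounded_part_partitions_l=2(mod 3)}, and the remaining case handled by feeding \Cref{general_decomposibility_l(2mod3)_k_even} into the inductive hypothesis exactly as you describe. The bookkeeping matching $\frac{4l+1}{3}-\epsilon$ and $\frac{8l-1}{3}-\epsilon$ against $n(2,l)$ and $n(4,l)$, and $\frac{(k-2)(2l-1)}{3}+\epsilon$, $\frac{(k-4)(2l-1)}{3}+\epsilon$ against the inductive bounds, is precisely what the paper does.
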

	
	\begin{proof}
		We use induction on $k$. By \Cref{newvalues_k=2_k=3_k=4}, the result holds for $k=2$ and $k=4$. Suppose that the result holds for all even $k'$ such that $k'\leq k-2$. We show that the result holds for $k$. Let $n=\frac{k(2l-1)}{3}+1$. Suppose $\sigma \in A_n$. If $n_{\sigma}\leq \frac{k(l-2)}{3}+1$, by \Cref{bounded_part_partitions_l=2(mod 3)}, we conclude that $\sigma \in \p(k,l;n)$. Thus we assume $n_{\sigma}\geq \frac{k(l-2)}{3}+2$. By \Cref{general_decomposibility_l(2mod3)_k_even}, we conclude that $\sigma$ can be written as a disjoint product of two permutations $\rho$ and $\tau$, where either $\rho\in A_{\frac{4l+1}{3}-\epsilon}$ and $\tau \in A_{\frac{(k-2)(2l-1)}{3}+\epsilon}$ , or $\rho\in A_{\frac{8l-1}{3}-\epsilon}$ and $\tau \in A_{\frac{(k-4)(2l-1)}{3}+\epsilon}$; $\epsilon \in \{0,1\}$. Note that in the first case, by \Cref{newvalues_k=2_k=3_k=4}, $\rho \in \p(2,l;\frac{4l+1}{3}-\epsilon)$ and by induction hypothesis $\tau \in \p(k-2,l; \frac{(k-2)(2l-1)}{3}+\epsilon)$. It follows that $\sigma \in \p(k,l;n)$. In the later case, by \Cref{newvalues_k=2_k=3_k=4}, $\rho \in \p(4,l; \frac{8l-1}{3}-\epsilon)$ and by induction hypothesis $\tau \in \p(k-4,l; \frac{(k-4)(2l-1)}{3}+\epsilon)$, from which it follows that $\sigma \in \p(k,l;n)$.
	\end{proof}
	
	Now we proceed to show that $n(k,l)\leq \frac{k(2l-1)}{3}+1$. To do that we find a permutation $\sigma$ in $A_{\frac{k(2l-1)}{3}+2}$ for each $k$ even such that $\sigma \notin \p(k,l;\frac{k(2l-1)}{3}+2)$.

	\begin{proposition}\label{UB_for_odd_l}
		Let $k\geq 2$ be even and $l\equiv 2 \;(\Mod\;3)$. Let $n= \frac{k(2l-1)}{3}+2$. The following statements hold:
		\begin{enumerate}
			\item If $k\equiv 2 \;(\Mod\;4)$ and $\sigma \in A_n$ is a disjoint product of $\frac{n}{2}$ many $2$-cycles then $\sigma \notin \p(k,l;n)$.
			
			\item If $k\equiv 2\;(\Mod\;4)$ and $\sigma \in A_{n+2}$ is a  disjoint product of $\frac{n}{2}-1$ many $2$-cycles and one $4$-cycle then $\sigma \notin \p(k,l;n+2)$.
			
			\item If $k\equiv 0\;(\Mod\;4)$ and $\sigma \in A_n$ is a disjoint product of $\frac{n}{2}-2$ many $2$-cycles and one $4$-cycle then $\sigma \notin \p(k,l;n)$.
			
			\item If $k\equiv 0 \;(\Mod\;4)$ and $\sigma \in A_{n+2}$ is a disjoint product of $\frac{n}{2}+1$ many $2$-cycles then $\sigma \notin \p(k,l;n+2)$.
		\end{enumerate}
		In particular, $n(k,l)\leq \frac{k(2l-1)}{3}+1$.
	\end{proposition}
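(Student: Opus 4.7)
The plan is to prove the four statements simultaneously by induction on even $k$, mirroring the proof of \Cref{UB_for_l=1(mod 3)}. The base cases are $k=2$ (for (1) and (2)) and $k=4$ (for (3) and (4)). For $k=2$ in (1), (2) and $k=4$ in (4) a direct computation shows $kl - m_{\sigma} - n_{\sigma} < 0$, so \Cref{general_product_condition} yields $\sigma \notin \p(k,l;\cdot)$ at once. The delicate base case is (3) at $k=4$, where the same computation gives only $kl - m_{\sigma} - n_{\sigma} = 0$. Here I would rule out every $(2,2)$-decomposition $\sigma = \rho\tau$ by hand: \Cref{product_of_two_cycles} (whose first branch is inapplicable since $l > 3$ and no cycle of $\sigma$ has length $l$) forces $m_{\rho} + n_{\rho} \leq 2l$ and $m_{\tau} + n_{\tau} \leq 2l$, while $m_{\sigma} + n_{\sigma} = 4l$ exactly, so equality must hold in both. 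On the other hand, according to whether the unique $4$-cycle of $\sigma$ sits in $\rho$ or in $\tau$, the quantity $m_{\rho} + n_{\rho}$ takes the form $3j$ or $3j + 5$, neither of which is congruent to $2l \equiv 1 \pmod 3$, a contradiction. Hence $\sigma$ is $k$-indecomposable, and since $0 < k - 2$, \Cref{indecomposable} yields $\sigma \notin \p(4,l;n)$.

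For the inductive step, fix even $k \geq 6$ and assume the proposition for all smaller even $k'$. A direct computation gives
\[
kl - m_{\sigma} - n_{\sigma} \;=\; \tfrac{k}{2}-3,\ \tfrac{k}{2}-5,\ \tfrac{k}{2}-2,\ \tfrac{k}{2}-6
\]
in cases (1)--(4) respectively, and each is strictly less than $k - 4$ for $k \geq 6$. By \Cref{indecomposable} it therefore suffices to show that $\sigma$ is even-$k$-indecomposable. Suppose otherwise, so $\sigma = \rho\tau$ with disjoint $\rho,\tau$, some even $k' \in [2, k-2]$, $\rho \in \p(k', l; m_{\rho})$ and $\tau \in \p(k-k', l; m_{\tau})$. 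Since $dcd*(\rho)$ and $dcd*(\tau)$ partition $dcd*(\sigma)$, in (1), (4) each of $\rho, \tau$ is a disjoint product of $2$-cycles, while in (2), (3) exactly one of $\rho, \tau$ contains the single $4$-cycle alongside some $2$-cycles and the other is a disjoint product of $2$-cycles only. I would then split on the residue of $k' \pmod 4$: whichever of (1)--(4) matches the cycle type of $\rho$, combined with repeated application of \Cref{adding_even_many_two_cycles}, gives a threshold $M(k')$ such that $m_{\rho} \geq M(k')$ forces $\rho \notin \p(k', l; m_{\rho})$. If $m_{\rho} \geq M(k')$ we obtain the required contradiction; otherwise $m_{\tau} = m_{\sigma} - m_{\rho}$ exceeds $m_{\sigma} - M(k')$, and a short modular check (using $n \equiv 0$ or $2 \pmod 4$ depending on whether $k \equiv 2$ or $0 \pmod 4$, together with the fact that $(2l-1)/3$ is odd) upgrades this to at least the threshold $M(k-k')$ provided by the induction hypothesis matching the cycle type of $\tau$, contradicting that hypothesis.

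The main obstacle will be the bookkeeping: four statements, two possible placements of the $4$-cycle in (2) and (3), and two subcases based on $k' \pmod 4$ produce many branches, each needing verification that the modular jump in $m_{\tau}$ lands in the congruence class covered by the correct induction hypothesis. No single branch is hard, but the modular arithmetic must be tracked carefully. Finally, the inequality $n(k, l) \leq \frac{k(2l-1)}{3} + 1$ follows at once from (1) when $k \equiv 2 \pmod 4$ and from (3) when $k \equiv 0 \pmod 4$, since each exhibits an element of $A_{\frac{k(2l-1)}{3}+2}$ outside $\p(k, l; \cdot)$; statements (2) and (4) serve chiefly as auxiliary claims feeding the induction.
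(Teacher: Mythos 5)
Your proposal is correct and follows essentially the same route as the paper: induction on even $k$ with base cases $k=2,4$, the computation $kl-m_{\sigma}-n_{\sigma}<k-4$ combined with \Cref{indecomposable}, and even-$k$-indecomposability in the inductive step established by splitting on $k'\pmod 4$ and the location of the $4$-cycle, then invoking the matching induction hypothesis together with \Cref{adding_even_many_two_cycles} after a modular upgrade of the bound on $m_{\tau}$. The only (minor, and perfectly valid) deviation is the delicate base case (3) at $k=4$, where you rule out $(2,2)$-decompositions by the mod-$3$ congruence $m_{\rho}+n_{\rho}\in\{3j,3j+5\}\not\equiv 2l\pmod 3$, whereas the paper bounds $m_{\rho}$ and $m_{\tau}$ directly via \Cref{general_product_condition} and contradicts $m_{\rho}+m_{\tau}=m_{\sigma}$.
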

	
	\begin{proof}
		 It is easy to see that the permutation considered in each is an even permutation. We prove the result by induction on $k$. Let $k=2$. Then $n=\frac{4l+4}{3}$. 
		Let $\sigma \in A_n$ be a disjoint product of $\frac{n}{2}$ many $2$-cycles. 
		Then $2l-m_{\sigma}-n_{\sigma}= 2l-\frac{4l+4}{3}-\frac{2l+2}{3}=-2<0$. Hence $\sigma \notin \p(2,l;n)$ by \Cref{general_product_condition}. Now let $\sigma \in A_{n+2}$ be a disjoint product of $\frac{n}{2}-1$ many $2$-cycles and one $4$-cycle. Then $2l-m_{\sigma}-n_{\sigma}=2l-\frac{4l+4}{3}-2-\frac{2l+2}{3}=-4$. Hence $\sigma \notin \p(2,l;n+2)$ by \Cref{general_product_condition}.
		
		\medskip
		
		Let $k=4$. Then $n=\frac{4(2l-1)}{3}+2=\frac{8l+2}{3}$. Let $\sigma \in A_n$ be disjoint product of $\frac{n}{2}-2$ many $2$-cycles and one $4$-cycle. Suppose that $\sigma \in \p(4,l;n)$.  We first show that $\sigma$ is $(2,2)$ indecomposable. Assume, on the contrary, that there exist two disjoint permutations $\rho ,\tau \in A_n$ such that $\sigma=\rho \tau$, $\rho \in \p(2,l;m_{\rho})$ and $\tau \in \p(2,l;m_{\tau})$. WLOG we may assume that $\rho$ is a disjoint product of $\frac{n_{\rho}}{2}$ many $2$ cycles and $\tau $ is a product of $\frac{m_{\tau}-4}{2}$ many $2$-cycles and one $4$-cycle. Note that $m_{\rho}=2n_{\rho}$ and $m_{\tau}=2n_{\tau}+2$. By \Cref{general_product_condition},  $2l-m_{\rho}-n_{\rho} \geq 0$ which implies $m_{\rho}\leq \frac{4l-2}{3}$. Similarly, $2l-m_{\tau}-n_{\tau} \geq0$ which implies $m_{\tau}\leq \frac{4l-2}{3}$. Thus we get that $\frac{8l+2}{3}=m_{\sigma}=m_{\rho}+m_{\tau} \leq \frac{8l-4}{3}$, which is a contradiction. Hence $\sigma$ is $(2,2)$ indecomposable. We note that $4l-m_{\sigma}-n_{\sigma}=4l-\frac{8l+2}{3}-\frac{4l+1}{3}+1=0$, which is a contradiction by \Cref{indecomposable}. Thus $\sigma \notin \p(4,l;n)$.
		
		Suppose now that $\sigma \in A_{n+2}$ be disjoint product of $\frac{n}{2}+1$ many $2$-cylces. We note that $4l-m_{\sigma}-n_{\sigma }=4l-\frac{8l+2}{3}-2-\frac{4l+1}{3}-1=-4<0$. Thus by \Cref{general_product_condition}, $\sigma \notin \p(4,l;n+2)$. Hence the lemma holds for $k=2,4$. Let $k \geq 6$ and suppose that the result holds for all even $k'\leq k-2$. For each $k\geq 6$, $k$ even and $n=\frac{k(2l-1)}{3}+2$, we first note the following:
		
		\begin{enumerate}[label=(\alph*)]
			\item If $\sigma$ is as in (1), then $kl-m_{\sigma}-n_{\sigma}= kl-\frac{k(2l-1)}{3}-2-\frac{k(2l-1)}{6}-1< k-4$.
			\item If $\sigma$ is as in (2), then $kl-m_{\sigma}-n_{\sigma}= kl-\frac{k(2l-1)}{3}-4-\frac{k(2l-1)}{6}-1< k-4$.
			\item If $\sigma$ is as in (3), then $kl-m_{\sigma}-n_{\sigma}= kl-\frac{k(2l-1)}{3}-2-\frac{k(2l-1)}{6}< k-4$.
			\item If $\sigma$ is as in (4), then $kl-m_{\sigma}-n_{\sigma}= kl-\frac{k(2l-1)}{3}-4-\frac{k(2l-1)}{6}-2< k-4$.
		\end{enumerate}
		From the above and using \Cref{indecomposable}, it is enough to prove that $\sigma$ is even-$k$-indecomposable in each case, which we establish now.
		
		\medskip

		\textbf{(1)} Assume that $k\equiv 2 \;(\Mod\;4)$. Suppose, on the contrary, that there exists even $k'$, $2 \leq k'\leq k-2$  and there exist two disjoint permutations $\rho ,\tau \in A_n$ such that $\sigma=\rho \tau$ and $\rho \in \p(k',l;m_{\rho})$ and $\tau \in \p(k-k',l;m_{\tau})$. Then $\rho $ (resp. $\tau$) is a product of 
		$\frac{m_{\rho }}{2}$ (resp. $\frac{m_{\tau }}{2}$) many $2$-cycles. Note that $n_{\rho}=\frac{m_{\rho}}{2}$ and $n_{\tau}=\frac{m_{\tau}}{2}$. WLOG we may assume that $k'\equiv 2\;(\Mod\;4)$ and $(k-k')\equiv 0\;(\Mod\;4)$. Assume that $m_{\rho}= \frac{k'(2l-1)}{3}+2$. By induction hypothesis (1), we conclude that $\rho \notin \p(k',l;m_{\rho})$, which is a contradiction. Further if  $m_{\rho}>\frac{k'(2l-1)}{3}+2$, since (a) holds, applying \Cref{adding_even_many_two_cycles} we conclude that $\rho \notin \p(k',l;m_{\rho})$, once again a contradiction.
		
		Now assume that $m_{\rho} < \frac{k'(2l-1)}{3}+2$. Since $m_{\rho}$ and $\frac{k'(2l-1)}{3}+2$ are both divisible by $4$, we have $m_{\rho}\leq \frac{k'(2l-1)}{3}-2$. 
		We have $m_{\tau}=m_{\sigma}-m_{\rho}\geq  \frac{(k-k')(2l-1)}{3}+4 $. 
		By induction hypothesis (4) and \Cref{adding_even_many_two_cycles} (since (d) holds), we conclude that $\tau \notin \p(k-k',l;m_{\tau})$, which is a contradiction. Hence $\sigma$ is even-$k$-indecomposable as required.
		\medskip
		
		\textbf{(2)} Assume that $k\equiv 2 \;(\Mod\;4)$. Suppose, on the contrary, that there exists even $k'$, $2 \leq k'\leq k-2$ and there exist two disjoint permutations $\rho ,\tau \in A_{n+2}$ such that $\sigma=\rho \tau$ and $\rho \in P(k',l;m_{\rho})$ and $\tau \in P(k-k',l;m_{\tau})$. WLOG we may assume that $\rho$ is a product of $\frac{m_{\rho}}{2}$ many $2$-cycles and $\tau$ is a product of $\frac{m_{\tau}-4}{2}$ many $2$-cycles and one $4$-cycle. Either $(i)$ $k'\equiv 0\;(\Mod\;4)$ and $k-k'\equiv 2\;(\Mod\;4)$ or 
		$(ii)$ $k-k' \equiv 0\;(\Mod\;4)$ and $k'\equiv 2\;(\Mod\;4)$.
		
		Assume that $(i)$ holds. If $m_{\rho}\geq \frac{k'(2l-1)}{3}+4$ 
		then $\rho \notin \p(k',l;m_{\rho})$ by induction hypothesis (4) and by \Cref{adding_even_many_two_cycles} (since (d) holds), which is a contradiction. Now assume that $m_{\rho}<\frac{k'(2l-1)}{3}+4$, i.e., $m_{\rho}\leq \frac{k'(2l-1)}{3}$. Consequently, $m_{\tau}=m_{\sigma}-m_{\rho} \geq \frac{(k-k')(2l-1)}{3}+4$. Notice that  $\tau \notin \p(k-k',l,m_{\tau})$ by induction hypothesis (2) and  \Cref{adding_even_many_two_cycles} (since (b) holds), which is a contradiction. 
		
		Now assume that $(ii)$ holds. 
		If $m_{\rho}\geq \frac{k'(2l-1)}{3}+2$ then  $\rho \notin \p(k',l;m_{\rho})$ by induction hypothesis (1) and \Cref{adding_even_many_two_cycles} (since (a) holds), which is a contradiction. Assume that $m_{\rho} < \frac{k'(2l-1)}{3}+2$. Then $m_{\rho}\leq \frac{k'(2l-1)}{3}-2$. We get $m_{\tau}=m_{\sigma}-m_{\rho}\geq \frac{(k-k')(2l-1)}{3}+6$. Thus by induction hypothesis (3) and \Cref{adding_even_many_two_cycles} (since (c) holds), we conclude that $\tau\notin \p(k-k',l;m_{\tau})$, which is a contradiction. We conclude that $\sigma$ is even-$k$-indecomposable in this case too.
		
		\medskip
		
		\textbf{(3)} Assume that $k\equiv 0\;(\Mod\;4)$. Suppose, on the contrary, that there exists even $k'$, $2 \leq k'\leq k-2$ and there exist two disjoint permutations $\rho ,\tau \in A_n$ such that $\sigma=\rho \tau$ and $\rho \in \p(k',l;m_{\rho})$ and $\tau \in \p(k-k',l;m_{\tau})$. WLOG we may assume that $\rho$ is a product of $\frac{m_{\rho}}{2}$ many $2$-cycles and $\tau$ is a product of $\frac{m_{\tau}-4}{2}$ many $2$-cycles and one $4$-cycle. Either $(i)$ $k',k-k'\equiv 0\;(\Mod\;4)$, or  $(ii)$ $k', k-k' \equiv 2\;(\Mod\;4)$.
		
		Assume that $(i)$ holds. If $m_{\rho}\geq \frac{k'(2l-1)}{3}+4$ then $\rho \notin \p(k',l;m_{\rho})$ by induction hypothesis (4), and by \Cref{adding_even_many_two_cycles} (since (d) holds), which is a contradiction. 
		Now assume that $m_{\rho}<\frac{k'(2l-1)}{3}+4$. Then $m_{\rho}\leq \frac{k(2l-1)}{3}$. We have $m_{\tau}=m_{\sigma}-m_{\rho}\geq \frac{(k-k')(2l-1)}{3}+2$. We conclude that $\tau \notin \p(k-k',l;m_{\tau})$ by induction hypothesis (3), and by \Cref{adding_even_many_two_cycles} (since (c) holds), which is a contradiction. 
		
		Assume that $(ii)$ holds. If $m_{\rho}\geq \frac{k'(2l-1)}{3}+2$ then $\rho \notin P(k',l;m_{\rho})$ by induction hypothesis (1) and by \Cref{adding_even_many_two_cycles} (since (a) holds), which is a contradiction. Now assume that $m_{\rho}< \frac{k'(2l-1)}{3} +2$. Then $m_{\rho}\leq \frac{k'(2l-1)}{3}-2 $. We have $m_{\tau}=m_{\sigma}-m_{\rho}\geq \frac{(k-k')(2l-1)}{3}+4$. Now by induction hypothesis (2) and  \Cref{adding_even_many_two_cycles} (since (b) holds), we conclude that $\tau \notin \p(k-k',l,m_{\rho})$, which is a contradiction. We once again conclude that $\sigma$ is even-$k$-indecomposable in this case.
		\medskip
		
		\textbf{(4)} Assume that $k\equiv 0\;(\Mod\;4)$ and $\sigma \in P(k,l;n+2)$. Assume, on the contrary, that there exists even $k'$, $2 \leq k'\leq k-2$ and there exist two disjoint permutations $\rho ,\tau \in A_{n+2}$ such that $\sigma=\rho \tau$ and $\rho \in \p(k',l;m_{\rho})$ and $\tau \in \p(k-k',l;m_{\tau})$. Then $\rho$ is a product of $\frac{m_{\rho}}{2}$ many $2$-cycles and $\tau$ is a product of $\frac{m_{\tau}}{2}$ many $2$-cycles. Either $(i)$ $k',k-k'\equiv 0\;(\Mod\;4)$ or $(ii)$ $k', k-k' \equiv 2\;(\Mod\;4)$.
		
		Assume that $(i)$ holds. If $m_{\rho}\geq \frac{k'(2l-1)}{3}+4$, then by induction hypothesis (4) and  \Cref{adding_even_many_two_cycles} (since (d) holds), we conclude that $\rho \notin \p(k',l,m_{\rho})$, which is a contradiction. 
		Now assume that $m_{\rho}< \frac{k'(2l-1)}{3}+4$. Then  $m_{\rho}\leq \frac{k'(2l-1)}{3}$. We have $m_{\tau}=m_{\sigma}-m_{\rho} \geq \frac{(k-k')(2l-1)}{3}+4$. By induction hypothesis (4), and by \Cref{adding_even_many_two_cycles} (since (d) holds), we conclude $\tau \notin \p(k-k',l;m_{\tau})$, which is a contradiction.
		
		Assume that $(ii)$ holds. If $m_{\rho}\geq \frac{k'(2l-1)}{3}+2$, then by induction hypothesis (1) and \Cref{adding_even_many_two_cycles} (since (a) holds), we conclude that $\rho \notin \p(k',l,m_{\rho})$, which is a contradiction. 
		Now assume that $m_{\rho}< \frac{k'(2l-1)}{3}+2$. Then $m_{\rho}\leq \frac{k'(2l-1)}{3}-2$. We have $m_{\tau}=m_{\sigma}-m_{\rho} \geq \frac{(k-k')(2l-1)}{3}+6$. Once again by induction hypothesis (1) and \Cref{adding_even_many_two_cycles} (since (a) holds), we conclude $\tau \notin \p(k-k',l;m_{\tau})$, which is a contradiction. Thus $\sigma$ is even-$k$-indecomposable in this case as well and the proof is complete.
	\end{proof}
	
	\noindent From \Cref{newvalues_k=2_k=3_k=4}, Proposition~\ref{n(k,l_lowerbound_k_even_l(2 mod 3)} and \Cref{UB_for_odd_l}, we conclude the following:
	
	$$\textbf{\Cref{maintheorem} holds when $k\geq 2$ is even and $l\equiv 2\;(\Mod\; 3)$.}$$
	
	\section{Complete Proof of \Cref{maintheorem}}\label{complete_proof}
	
	In this section, we complete the proof of \Cref{maintheorem} by finding the exact value of $n(k,l)$ when $k$ is odd and $l\equiv 2\;(\Mod \; 3)$. We start with $k=5$.
	
	\begin{lemma}\label{lowerbound_k=5}
		Let $l>3$ be such that $l\equiv 2\;(\Mod\; 3)$ and $l$ is odd. Then $n(5,l)\geq \frac{10l-5}{3}.$
	\end{lemma}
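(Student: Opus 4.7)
The plan is to follow the rough strategy of Section~\ref{survey} and to reduce to the base cases $k=2$ and $k=3$ via the split $5 = 2+3$. Writing $l = 3M+2$ with $M \geq 1$ forces $M$ odd (since $l$ is odd), so that $n := \frac{10l-5}{3} = 10M+5$; by Remark~\ref{newvalues_k=2_k=3_k=4} one then has $n(2,l) = 4M+3$ and $n(3,l) = 6M+4$. Fix $\sigma \in A_n$. If $n_\sigma \leq 5M+1 = \frac{5(l-2)}{3}+1$, I would invoke Proposition~\ref{bounded_part_partitions_l=2(mod 3)} directly (its hypothesis $n \leq \frac{k(2l-1)}{3}+1$ is satisfied, as $\frac{10l-5}{3} \leq \frac{10l-2}{3}$) to conclude $\sigma \in \p(5,l;n)$.

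The substantive case is $n_\sigma \geq 5M+2$. The key observation is that $n_\sigma$ is then forced to equal $5M+2$ exactly: each nontrivial cycle has length $\geq 2$, whence $n_\sigma \leq m_\sigma/2 \leq n/2 = (10M+5)/2$, so $n_\sigma \leq 5M+2$. Next, since $n_\sigma = 5M+2$ is odd (using that $M$ is odd) and $\sigma \in A_n$, $m_\sigma$ must also be odd; combined with $10M+4 \leq m_\sigma \leq 10M+5$ this forces $m_\sigma = 10M+5$. Consequently $\sum_i(i-2) n_i(\sigma) = m_\sigma - 2n_\sigma = 1$, which pins down the cycle type: $n_3(\sigma) = 1$, $n_i(\sigma) = 0$ for $i \geq 4$, and $n_2(\sigma) = 5M+1$.

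With the cycle type of $\sigma$ so determined, I would let $\rho$ be the disjoint product of the unique $3$-cycle of $\sigma$ together with any $2M$ of its $2$-cycles, and take $\tau$ to be the product of the remaining $3M+1$ many $2$-cycles. A direct calculation yields $m_\rho = 4M+3 = n(2,l)$ and $m_\tau = 6M+2 \leq n(3,l)$, while $m_\rho + n_\rho = 6M+4$ and $m_\tau + n_\tau = 9M+3$ are both even (the latter uses $M$ odd). Hence $\rho \in A_{m_\rho}$ lies in $\p(2,l;m_\rho)$ by Theorem~\ref{n(2,l)_bounds}, and $\tau \in A_{m_\tau}$ lies in $\p(3,l;m_\tau)$ by Theorem~\ref{n(3,l)_bounds} (whose requirement that $l$ be odd is in force). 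Writing $\rho = C_1 C_2$ and $\tau = C_3 C_4 C_5$ as products of $l$-cycles supported within $\supp(\rho)$ and $\supp(\tau)$ respectively, the disjointness of supports gives $\sigma = C_1 C_2 C_3 C_4 C_5 \in \p(5,l;n)$, as desired.

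The main conceptual point — rather than any genuine technical obstacle — is that the upper bound $n_\sigma \leq m_\sigma/2$ together with the parity constraint leaves exactly one cycle type to analyse in the hard regime, so that the ``small-cycle'' machinery of Section~\ref{survey} is used only to identify that structure and not to sift among multiple candidate decompositions. The natural partition into pieces of sizes $4M+3$ and $6M+2$ then automatically saturates $n(2,l)$ while leaving slack against $n(3,l)$, and both parity and length conditions are simultaneously respected.
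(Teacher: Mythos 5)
Your proof is correct and follows essentially the same route as the paper: reduce to Proposition~\ref{bounded_part_partitions_l=2(mod 3)} when $n_\sigma\leq 5M+1$, then use the parity and counting constraints to pin down the cycle type $(5M+1)$ many $2$-cycles plus one $3$-cycle, and split into disjoint pieces of sizes $4M+3$ and $6M+2$ handled by the $k=2$ and $k=3$ base cases. The only difference from the paper is the naming of the two pieces ($\rho$ and $\tau$ are swapped); the decomposition itself is identical.
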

	
	\begin{proof}
		Let $\sigma \in A_{\frac{10l-5}{3}}.$ When $n_{\sigma}\leq \frac{k(l-2)}{3}+1=\frac{5(l-2)}{3}+1$, then by \Cref{bounded_part_partitions_l=2(mod 3)}, $\sigma \in \p(5,l;n)$. Thus we now assume that $n_{\sigma}\geq \frac{5(l-2)}{3}+2$. Write $l=3M+2$. Note that $M\geq 1$ is odd as $l$ is odd. Thus $\sigma\in A_{10M+5}$ with $n_{\sigma}\geq 5M+2$. Note that in this case it is clear that $n_{\sigma}=5M+2$, for if $n_{\sigma}\geq 5M+3$, then $m_{\sigma}\geq 10M+6$, a contradiction. Further since $n_{\sigma}=5M+2$, $m_{\sigma}\geq 10M+4$. If $m_{\sigma}=10M+4$, then $m_{\sigma}+n_{\sigma}=15M+6$ which is odd, since $M$ is odd. This is a contradiction as $\sigma \in A_n$. Thus we get $m_{\sigma}=10M+5$. Note that this forces $\sigma$ to be a disjoint product of $5M+1$ many 2-cycles and a 3-cycle. If we choose $\rho$ to be a product of $3M+1$ many 2-cycles from $dcd*(\sigma)$ and $\tau$ to be the product of cycles in $dcd*(\sigma)\setminus dcd*(\rho)$, then it is easily seen that $\rho \in A_{6M+2}$ and $\tau\in A_{4M+3}$ and $\sigma=\rho\tau$. Since $2l=6M+4$ and $\frac{4l+1}{3}=4M+3$, by \Cref{newvalues_k=2_k=3_k=4}, we conclude that $\rho \in \p(3,l; 2l-2)$ and $\tau \in \p(2,l; \frac{4l+1}{3})$, whence $\sigma \in \p(5,l;\frac{10l-5}{3})$. This completes the proof.
	\end{proof}

	\begin{lemma}\label{decomposability_lemma_k=3(mod 4)_l_odd_2(mod3)}
		Let $k\geq 7$ be odd, $k\equiv 3\;(\Mod\; 4)$ and $l>3$ be such that $l\equiv 2\;(\Mod\;3)$. Let $n=\frac{k(2l-1)}{3}+1$. Suppose $\sigma \in A_n$ be such that $n_{\sigma}\geq \frac{k(l-2)}{3}+2$. Then, $\sigma$ can be written as a product of two disjoint permutations $\rho$ and $\tau$ such that one of the following holds:
		\begin{enumerate}
			\item  $\rho\in A_{\frac{4l+1}{3}}$ and $\tau \in A_{\frac{(k-2)(2l-1)}{3}},$
			
			\item $\rho\in A_{\frac{8l-1}{3}-\epsilon}$ and $\tau \in A_{\frac{(k-4)(2l-1)}{3}+\epsilon},$
		\end{enumerate}
		where $\epsilon \in \{0,1\}$.
	\end{lemma}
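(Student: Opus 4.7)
The plan is to adapt the proof of \Cref{general_decomposibility_l(2mod3)_k_even} to the odd-$k$ setting, with a simplification made possible by the hypothesis $k \geq 7$. Write $l = 3M+2$; since $l$ is odd, $M$ must be odd. From the hypotheses, $m_\sigma \leq k(2M+1)+1$ and $n_\sigma \geq kM + 2$, while $\frac{4l+1}{3} = 4M+3$ and $\frac{8l-1}{3} = 8M+5$. The strategy is to select $\rho$ as a disjoint product of short cycles from $dcd*(\sigma)$ with the correct support size and parity, then let $\tau$ be the product of the remaining cycles of $dcd*(\sigma)$; since both $\sigma$ and $\rho$ will be even, $\tau$ is automatically even.

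The key preliminary step is to derive the unconditional lower bound $n_2 = n_2(\sigma) \geq 2M$. Applying \Cref{cycle_bound_inequality} with $i = 3$, $j = 2$ yields
\[
2n_2 \;\geq\; 4n_\sigma - m_\sigma - n_3 \;\geq\; 2kM - k + 7 - n_3.
\]
Combined with the trivial bound $n_3 \leq m_\sigma/3 \leq (2kM+k+1)/3$, a short computation using $k \geq 7$ and $M \geq 1$ yields $n_2 \geq 2M$.

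Next I would split on whether $dcd*(\sigma)$ contains any $3$-cycle. If $n_3 \geq 1$, take $\rho$ to be a disjoint product of $2M$ many $2$-cycles and one $3$-cycle drawn from $dcd*(\sigma)$; then $m_\rho = 4M+3$, $n_\rho = 2M+1$, so $m_\rho + n_\rho = 6M+4$ is even and $\rho \in A_{4M+3}$. The complement $\tau$ satisfies $m_\tau = m_\sigma - (4M+3) \leq (k-2)(2M+1)$, giving case (1). If instead $n_3 = 0$, the sharper inequality $2n_2 \geq 2kM - k + 7 \geq 14M$ (using $k \geq 7$) gives $n_2 \geq 7M \geq 4M+2$, so I pick $\rho$ to be a product of $4M+2$ many $2$-cycles; this yields $m_\rho = 8M+4$, $n_\rho = 4M+2$, $\rho \in A_{8M+4}$, while the complement has $m_\tau \leq (k-4)(2M+1)+1$, giving case (2) with $\epsilon = 1$.

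The main technical point is that case (1) of the lemma permits no $\epsilon$-slack (unlike its even-$k$ counterpart in \Cref{general_decomposibility_l(2mod3)_k_even}), so $\rho$ must have support of exactly $4M+3$ in that branch, which rules out the purely-$3$-cycle construction that worked in the even case whenever $M \not\equiv 0 \pmod 3$. This is precisely what forces the uniform lower bound $n_2 \geq 2M$, and that bound in turn relies essentially on $k \geq 7$. Beyond this, verifying that $\tau$ lands in the advertised alternating group and that supports match up disjointly to partition $\supp(\sigma)$ is routine.
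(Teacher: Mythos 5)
Your proof is correct and follows essentially the same route as the paper: the same cycle-counting estimate, the same two constructions of $\rho$ (namely $2M$ many $2$-cycles together with one $3$-cycle when $n_3\geq 1$, and $4M+2$ many $2$-cycles when $n_3=0$), and the same complementary choice of $\tau$, with identical support bookkeeping. The one streamlining is that your unconditional bound $n_2\geq 2M$, obtained by feeding the crude estimate $n_3\leq m_\sigma/3$ into the inequality $2n_2\geq 4n_\sigma-m_\sigma-n_3$ (and indeed $\frac{4kM-4k+20}{3}\geq 4M$ holds for all $k\geq 7$, $M\geq 1$), makes superfluous the paper's separate opening case $n_3\geq\frac{8M+5}{3}$, where $\rho$ is assembled from $3$-cycles alone using the fact that exactly one of $4M+3$, $8M+4$, $8M+5$ is divisible by $3$.
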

	
	\begin{proof}
		We write $l=3M+2$ where $M\geq 1$. Note that $m_{\sigma}\leq \frac{k(2l-1)}{3}+1=k(2M+1)+1$ and $n_{\sigma}\geq kM+2.$ We also have $\frac{4l+1}{3}=4M+3$ and $\frac{8l-1}{3}=8M+5$. Note that we will write $n_i$ to mean $n_i(\sigma)$ throughout the proof. Now we construct $\rho$ by defining it to be a certain product of cycles occurring in $dcd*(\sigma)$. Note that it is enough to construct $\rho$ in the desired $A_n$ as in (1) or (2) as $\tau$ can then be chosen to be the product of all the cycles occuring in $dcd*(\sigma)\setminus dcd*(\rho)$. 
		
		Observe that one of $4M+3, 8M+4,8M+5$ is divisible by 3. Let $\alpha$ be the number among them  which is divisible by 3. Thus if $n_3\geq \frac{8M+5}{3}$, we can choose $\rho$ to be the product of $\alpha$ many 3-cycles from $dcd*(\sigma)$ when $\alpha$ is either $8M+4$ or $8M+5$, whence it follows that $\rho\in A_{\frac{8l-1}{3}-\epsilon}$ where $\epsilon=1$ and $\epsilon=0$ respectively. If $\alpha=4M+3$, we can choose $\rho$ to be the product of $\alpha$ many $3$-cycles from $dcd*(\sigma)$, whence $\rho \in A_{\frac{4l+1}{3}}$. So assume that $n_3\leq \frac{8M+5}{3}$. Now we get a least estimate for $n_2$ using Equation~\ref{cycle_bound_inequality}. Putting $j=2$ and $i=3$ in Equation~\ref{cycle_bound_inequality}, we get
		\begin{equation}\label{eqn_prop5.2_1}
		2n_2 \geq 4n_{\sigma}-m_{\sigma}-n_3\geq 4(kM+2)-(2kM+k+1)-\frac{8M+5}{3}
		\end{equation}
		$\displaystyle \hspace{2.7 cm} = \frac{k(6M-3)-8M+16}{3}\geq \frac{34M-5}{3}$.
		
		\vspace{1 mm}
		\noindent The last inequality follows since $M\geq 1$ and $k\geq 7$. Note that $\frac{34M-5}{3}\geq 4M$. If we assume that $n_3\geq 1$, then we can choose $\rho$ to be the product of $2M$-many 2-cycles and a 3-cycle from $dcd*(\sigma)$, whence $\rho \in A_{\frac{4l+1}{3}}$. Thus we now assume that $n_3=0$. Doing the same computation as in Equation~\ref{eqn_prop5.2_1} with $n_3=0$, we get an improved least estimate of $n_2$ as follows:
		$$2n_2\geq 4n_{\sigma}-m_{\sigma}-n_3\geq 4(kM+2)-2kM-k-1= k(2M-1)+7\geq 14M.$$
		The last inequality follows since $M\geq 1$ and $k\geq 7$.
		This implies $n_2\geq 7M$. Thus, we can choose $\rho$ to be the product of $4M+2$ many 2-cycles, whence $\rho \in A_{\frac{8l-1}{3}-1}.$ The proof is now complete.
	\end{proof}
	
	\begin{lemma}\label{decomposability_lemma_k=1(mod 4)_l_odd_2(mod3)}
		Let $k\geq 9$ be odd, $k\equiv 1\;(\Mod\; 4)$ and $l>3$ be such that $l\equiv 2\;(\Mod\;3)$. Let $n=\frac{k(2l-1)}{3}$. Suppose $\sigma \in A_n$ be such that $n_{\sigma}\geq \frac{k(l-2)}{3}+2$. Then, $\sigma$ can be written as a product of two disjoint permutations $\rho$ and $\tau$ such that one of the following holds:
		\begin{enumerate}
			\item  $\rho\in A_{\frac{4l+1}{3}-\epsilon}$ and $\tau \in A_{\frac{(k-2)(2l-1)}{3}-1+\epsilon},$
			
			\item $\rho\in A_{\frac{8l-1}{3}-1}$ and $\tau \in A_{\frac{(k-4)(2l-1)}{3}},$
		\end{enumerate}
		where $\epsilon \in \{0,1,2\}$.
	\end{lemma}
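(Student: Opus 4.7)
The plan is to mimic the proofs of Lemmas~\ref{general_decomposibility_l(2mod3)_k_even} and~\ref{decomposability_lemma_k=3(mod 4)_l_odd_2(mod3)}. Writing $l=3M+2$ with $M\geq 1$, the hypotheses translate to $m_\sigma\leq k(2M+1)$ and $n_\sigma\geq kM+2$, while the four admissible sizes for $m_\rho$ become $\{4M+1,4M+2,4M+3\}$ (realising case~(1) with $\epsilon=2,1,0$ respectively) together with $8M+4$ (realising case~(2)). It suffices to build $\rho$ as a product of a sub-collection of cycles from $dcd*(\sigma)$ achieving one of these support sizes and lying in the alternating group; one then sets $\tau$ to be the product of the remaining cycles. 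The inequality $m_\tau=m_\sigma-m_\rho\leq k(2M+1)-m_\rho$ places $\tau$ in the required alternating group by complementary arithmetic, and evenness of $\tau$ follows from $\sigma,\rho\in A_n$.

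\emph{Three-cycle strategy.} Among the three consecutive integers $4M+1,4M+2,4M+3$, exactly one is divisible by~$3$; call it $\alpha$. If $n_3(\sigma)\geq\alpha/3$, take $\rho$ to be the product of $\alpha/3$ of the 3-cycles in $dcd*(\sigma)$. A product of 3-cycles is automatically even, $m_\rho=\alpha$, and case~(1) holds with the matching $\epsilon\in\{0,1,2\}$.

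\emph{Two-cycle strategy.} Otherwise $n_3\leq\alpha/3-1\leq 4M/3$. Applying Equation~\ref{cycle_bound_inequality} with $i=3$, $j=2$ gives
\[
2n_2\;\geq\;4n_\sigma-m_\sigma-n_3\;\geq\;k(2M-1)+8-n_3.
\]
Using $k\geq 9$, a short arithmetic check yields $n_2\geq 2M$ whenever $n_3\geq 1$, and the sharper $n_2\geq 4M+2$ when $n_3=0$. In the former subcase, form $\rho$ from $2M$ of the 2-cycles together with a single 3-cycle in $dcd*(\sigma)$: then $m_\rho=4M+3$ and the parity is $(-1)^{2M}\cdot(+1)=+1$, matching case~(1) with $\epsilon=0$. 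In the latter subcase, take $\rho$ to be the product of $4M+2$ of the 2-cycles; this is an even number of transpositions, hence an even permutation, with $m_\rho=8M+4$, matching case~(2).

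The key new feature, compared with Lemma~\ref{decomposability_lemma_k=3(mod 4)_l_odd_2(mod3)}, is that $n=k(2M+1)$ is one smaller here, so the three-cycle strategy must be allowed to target any of $4M+1,4M+2,4M+3$; this is precisely what the extended range $\epsilon\in\{0,1,2\}$ in case~(1) provides. The main obstacle I expect is verifying the $n_2$ lower bounds in the tight boundary regime $k=9$, $M=1$; there $2n_2\geq 17-n_3$ still comfortably supplies $n_2\geq 2M=2$ when $n_3\geq 1$ and $n_2\geq 4M+2=6$ when $n_3=0$, so the argument goes through uniformly.
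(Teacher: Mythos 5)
Your proposal is correct and follows essentially the same route as the paper: the same target support sizes $4M+1,4M+2,4M+3$ and $8M+4$, the same use of Equation~\eqref{cycle_bound_inequality} with $i=3$, $j=2$, and the same complementary arithmetic for $\tau$. The only difference is that your intermediate subcase ($2M$ transpositions plus one $3$-cycle when $1\leq n_3<\alpha/3$) is redundant: the paper observes that the bound $n_2\geq \frac{25M-3}{3}\geq 4M+2$ already holds for all $n_3\leq\frac{4M+3}{3}$, so the pure two-cycle construction covers that regime directly.
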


	\begin{proof}
		We write $l=3M+2$ where $M\geq 1$. We get $M$ is odd since $l$ is odd. Let $\sigma \in A_{\frac{k(2l-1)}{3}}$ such that $n_{\sigma}\geq \frac{k(l-2)}{3}+2=kM+2$. Note that $m_{\sigma}\leq k(2M+1)$. Further $\frac{4l+1}{3}=4M+3$ and $\frac{8l-1}{3}=8M+5$. Note that we will write $n_i$ to mean $n_i(\sigma)$ throughout the proof. Now we construct $\rho$ by defining it to be a certain product of cycles occurring in $dcd*(\sigma)$. Note that it is enough to construct $\rho$ in the desired $A_n$ as in (1) or (2) as $\tau$ can then be chosen to be the product of all the cycles occuring in $dcd*(\sigma)\setminus dcd*(\rho).$
		
		Observe that one of $4M+1, 4M+2, 4M+3$ is divisible by 3. Let $\alpha$ be the number among them  which is divisible by 3. Thus if $n_3\geq \frac{4M+3}{3}$, we can choose $\rho$ to be the product of $\alpha$ many 3-cycles from $dcd*(\sigma)$ where $\alpha$ is either $4M+1$ or, $4M+2$ or $4M+3$, whence it follows that $\rho\in A_{\frac{4l+1}{3}-\epsilon}$ where $\epsilon=2$, $\epsilon=1$ and $\epsilon=0$ respectively. So assume that $n_3\leq \frac{4M+3}{3}$. Now we get a least estimate for $n_2$ using Equation~\ref{cycle_bound_inequality}. Putting $j=2$ and $i=3$ in Equation~\ref{cycle_bound_inequality}, we get
		\begin{equation}\label{eqn_prop5.3_1}
		2n_2 \geq 4n_{\sigma}-m_{\sigma}-n_3\geq 4(kM+2)-(2kM+k)-\frac{4M+3}{3}
		\end{equation}
		$\displaystyle \hspace{2.7 cm}= \frac{k(6M-3)-4M+21}{3}\geq \frac{50M-6}{3}$.
		
		\noindent The last inequality follows since $k\geq 9$. Thus we get $n_2\geq \frac{25M-3}{3}$ which implies that $n_2\geq 4M+2$. Thus we can choose $\rho$ to be the product of $4M+2$ many 2-cycle, whence $\rho \in A_{\frac{8l-1}{3}-1}$. This completes the proof.
	\end{proof}
	
	\begin{proposition}\label{lower_bound_for_k_odd_l_odd_l=2(mod3)}
		Let $k\geq 3$ be odd and $l>3$ be such that $l$ is odd and $l\equiv 2\;(\Mod\; 3)$. Then
		\[n(k,l)\geq 
		\begin{cases}
			\frac{k(2l-1)}{3} +1& \text{when $k\equiv 3\;(\Mod\; 4)$}\\
			\frac{k(2l-1)}{3} & \text{when $k\equiv 1\;(\Mod\; 4)$}.
		\end{cases}
		\]
	\end{proposition}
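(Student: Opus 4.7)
I plan to prove the proposition by strong induction on $k$ odd, following the same template used for Proposition~\ref{n(k,l_lowerbound_k_even_l(2 mod 3)} in the previous section. The two residue classes $k\equiv 3\;(\Mod\;4)$ and $k\equiv 1\;(\Mod\;4)$ will be handled in parallel and will feed into each other: reducing $k$ by $2$ swaps the residue class and reducing $k$ by $4$ preserves it, which is exactly what the decomposability lemmas~\ref{decomposability_lemma_k=3(mod 4)_l_odd_2(mod3)} and~\ref{decomposability_lemma_k=1(mod 4)_l_odd_2(mod3)} produce.

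The base cases are $k=3$ (where $n(3,l)=2l=\frac{3(2l-1)}{3}+1$ by Theorem~\ref{n(3,l)_bounds}, giving the $k\equiv 3\;(\Mod\;4)$ case) and $k=5$ (the $k\equiv 1\;(\Mod\;4)$ case, which is exactly Lemma~\ref{lowerbound_k=5}). For the inductive step, fix $\sigma\in A_n$ with $n$ equal to the claimed lower bound for $n(k,l)$. I would split into two cases according to the size of $n_{\sigma}$. If $n_{\sigma}\leq \frac{k(l-2)}{3}+1$, then Proposition~\ref{bounded_part_partitions_l=2(mod 3)} directly yields $\sigma\in\p(k,l;n)$. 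So the real content lies in the complementary case $n_{\sigma}\geq \frac{k(l-2)}{3}+2$.

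In this latter case I would invoke Lemma~\ref{decomposability_lemma_k=3(mod 4)_l_odd_2(mod3)} (resp.\ Lemma~\ref{decomposability_lemma_k=1(mod 4)_l_odd_2(mod3)}) to obtain $\sigma=\rho\tau$ with $\rho,\tau$ disjoint and $\rho,\tau$ landing in explicitly bounded alternating groups. For $k\equiv 3\;(\Mod\;4)$, the two subcases of the lemma give either $\rho\in A_{\frac{4l+1}{3}}$ together with $\tau\in A_{\frac{(k-2)(2l-1)}{3}}$ (and $k-2\equiv 1\;(\Mod\;4)$, so the induction hypothesis supplies $\tau\in\p(k-2,l;\cdot)$), or $\rho\in A_{\frac{8l-1}{3}-\epsilon}$ together with $\tau\in A_{\frac{(k-4)(2l-1)}{3}+\epsilon}$ with $\epsilon\in\{0,1\}$ (and $k-4\equiv 3\;(\Mod\;4)$, so the induction hypothesis gives $n(k-4,l)\geq\frac{(k-4)(2l-1)}{3}+1$, handling both values of $\epsilon$). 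In each case $\rho$ is handled by Remark~\ref{newvalues_k=2_k=3_k=4}, which says $n(2,l)=\frac{4l+1}{3}$ and $n(4,l)=\frac{8l-1}{3}$, and standard monotonicity in $n$ (via Bertram's and Bertram--Herzog's theorems) takes care of the shifts by $\epsilon$. The case $k\equiv 1\;(\Mod\;4)$ is entirely analogous, with $\epsilon\in\{0,1,2\}$ in the first subcase and the induction hypothesis now applied to $k-2\equiv 3\;(\Mod\;4)$ (which gives $n(k-2,l)\geq\frac{(k-2)(2l-1)}{3}+1$, enough slack for $\epsilon\leq 2$) and $k-4\equiv 1\;(\Mod\;4)$.

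I expect no genuine obstacle here since the heavy lifting has already been done in Lemmas~\ref{lowerbound_k=5}, \ref{decomposability_lemma_k=3(mod 4)_l_odd_2(mod3)} and~\ref{decomposability_lemma_k=1(mod 4)_l_odd_2(mod3)}; the proof is essentially a bookkeeping argument tracking the residues of $k\pmod 4$ through the induction. The only mildly delicate point is verifying that the shifted sizes $\frac{4l+1}{3}-\epsilon$, $\frac{8l-1}{3}-\epsilon$ and $\frac{(k-2)(2l-1)}{3}-1+\epsilon$ actually lie within the known values of $n(2,l)$, $n(4,l)$ and the inductively guaranteed $n(k-2,l)$ respectively, which amounts to checking small inequalities and confirming $l\geq 5$ is large enough to keep the $l$-cycles meaningful; this is routine.
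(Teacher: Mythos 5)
Your proposal is correct and follows essentially the same route as the paper: base cases $k=3$ (via Theorem~\ref{n(3,l)_bounds}) and $k=5$ (via Lemma~\ref{lowerbound_k=5}), then strong induction splitting on whether $n_{\sigma}\leq \frac{k(l-2)}{3}+1$ (handled by Proposition~\ref{bounded_part_partitions_l=2(mod 3)}) or $n_{\sigma}\geq \frac{k(l-2)}{3}+2$ (handled by Lemmas~\ref{decomposability_lemma_k=3(mod 4)_l_odd_2(mod3)} and~\ref{decomposability_lemma_k=1(mod 4)_l_odd_2(mod3)}, with the residue of $k$ modulo $4$ tracked exactly as you describe). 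The bookkeeping of the shifts by $\epsilon$ against $n(2,l)$, $n(4,l)$ and the inductive bound matches the paper's argument.
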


	\begin{proof}
		By \Cref{newvalues_k=2_k=3_k=4}, the result holds for $k=3$. By \Cref{lowerbound_k=5}, the result holds for $k=5$. We now use induction on $k$. Let the result hold for all odd $k'$ such that $k'\leq k-2$.
		
		\medskip
		
		\textbf{\underline{Case 1}:} Let $k\equiv 3\;(\Mod\; 4)$. Let $n=\frac{k(2l-1)}{3}+1$ and suppose that $\sigma \in A_{n}$. If $n_{\sigma}\leq \frac{k(l-2)}{3}$, then it follows from \Cref{bounded_part_partitions_l=2(mod 3)}, that $\sigma \in \p(k,l;n).$ Thus we assume $n_{\sigma}\geq \frac{k(l-2)}{3}+2.$ By \Cref{decomposability_lemma_k=3(mod 4)_l_odd_2(mod3)}, $\sigma$ can be written as a disjoint product of two permutation $\rho$ and $\tau$ such that either $\rho\in A_{\frac{4l+1}{3}}$ and $\tau \in A_{\frac{(k-2)(2l-1)}{3}}$ or, $\rho\in A_{\frac{8l-1}{3}-\epsilon}$ and $\tau \in A_{\frac{(k-4)(2l-1)}{3}+\epsilon}$, where $\epsilon \in \{0,1\}$. In the first case, by \Cref{newvalues_k=2_k=3_k=4}, $\rho \in \p(2,l;\frac{4l+1}{3})$. Further, by induction hypothesis, $\tau\in \p(k-2,l;\frac{(k-2)(2l-1)}{3})$ since $k-2\equiv 1\;(\Mod\; 4)$. We conclude that $\sigma \in \p(k,l;n)$. In the later case, once again by \Cref{newvalues_k=2_k=3_k=4}, $\rho \in \p(4,l; \frac{8l-1}{3}-\epsilon)$, and by induction hypothesis $\tau \in \p(k-4,l; \frac{(k-4)(2l-1)}{3}+\epsilon)$ since $k-4\equiv 3\;(\Mod\; 4)$. Once again we conclude that $\sigma \in \p(k,l;n)$.
		
		\medskip
		
		\textbf{\underline{Case 2}:} Let $k\equiv 1\;(\Mod\; 4)$. Let $n=\frac{k(2l-1)}{3}$ and suppose that $\sigma \in A_{n}$. If $n_{\sigma}\leq \frac{k(l-2)}{3}$, then it follows from \Cref{bounded_part_partitions_l=2(mod 3)}, that $\sigma \in \p(k,l;n).$ Thus we assume $n_{\sigma}\geq \frac{k(l-2)}{3}+2.$ By \Cref{decomposability_lemma_k=1(mod 4)_l_odd_2(mod3)}, $\sigma$ can be written as a disjoint product of two permutation $\rho$ and $\tau$ such that either $\rho\in A_{\frac{4l+1}{3}-\epsilon}$ and $\tau \in A_{\frac{(k-2)(2l-1)}{3}-1+\epsilon}$ ($\epsilon \in \{0,1,2\}$) or, $\rho\in A_{\frac{8l-1}{3}-1}$ and $\tau \in A_{\frac{(k-4)(2l-1)}{3}}$. In the first case, by \Cref{newvalues_k=2_k=3_k=4}, $\rho \in \p(2,l;\frac{4l+1}{3}-\epsilon)$. Further by induction hypothesis, $\tau\in \p(k-2,l;\frac{(k-2)(2l-1)}{3}-1+\epsilon)$ since $k-2\equiv 3\;(\Mod\; 4)$. We conclude that $\sigma \in \p(k,l;n)$. In the later case, once again by \Cref{newvalues_k=2_k=3_k=4}, $\rho \in \p(4,l; \frac{8l-1}{3}-1)$ and by induction hypothesis $\tau \in \p(k-4,l; \frac{(k-4)(2l-1)}{3})$. Once again we conclude that $\sigma \in \p(k,l;n)$.
		
		This completes the induction and proof is complete.
	\end{proof}

	We now proceed to determine the exact value of $n(k,l)$ when $k$ is odd. The following lemma is required.
	
	\begin{lemma}\label{auxillary_lemma_k_even}
		Let $k\geq 2$ be even and $l\equiv 2\;(\Mod\; 3)$. Let $n=\frac{k(2l-1)}{3}$.
		\begin{enumerate}
			\item If $k\equiv 2\;(\Mod\;4)$ and $\sigma \in A_{n+5}$ be such that $\sigma$ is a disjoint product of $\frac{n}{2}+1$ many 2-cycles and a 3-cycle, then $\sigma \notin \p(k,l; n+5)$.
			
			\item If $k\equiv 0\;(\Mod\;4)$ and $\sigma \in A_{n+3}$ be such that $\sigma$ is a disjoint product of $\frac{n}{2}$ many 2-cycles and a 3-cycle, then $\sigma \notin \p(k,l; n+3)$.
		\end{enumerate}
	\end{lemma}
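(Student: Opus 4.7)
The plan is to apply \Cref{indecomposable} in the spirit of the proof of \Cref{UB_for_odd_l}, by a simultaneous induction on $k$ over both parts (1) and (2).

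First, a direct calculation gives
\[
kl-m_\sigma-n_\sigma=\tfrac{k}{2}-7 \text{ in case (1)}, \qquad kl-m_\sigma-n_\sigma=\tfrac{k}{2}-4 \text{ in case (2)},
\]
both strictly less than $k-4$ for every $k\ge 6$. For the base cases $k=2$ in (1) and $k=4$ in (2), the above expression is in fact negative, so \Cref{general_product_condition} directly yields $\sigma\notin\p(k,l;\cdot)$.

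For $k\ge 6$, by \Cref{indecomposable} it suffices to prove that $\sigma$ is even-$k$-indecomposable. Assume for contradiction that $\sigma=\rho\tau$ with $\rho\in\p(k',l;m_\rho)$ and $\tau\in\p(k-k',l;m_\tau)$ for some even $k'$, $2\le k'\le k-2$. Since $\sigma$ has a unique $3$-cycle, it lies in exactly one of $\rho,\tau$; WLOG it lies in $\rho$. Then $\rho$ is a disjoint product of some number of $2$-cycles together with the $3$-cycle (so $m_\rho$ is odd), while $\tau$ is a disjoint product of $2$-cycles only (so $m_\tau$ is even).

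I would then split into cases according to $k'\bmod 4$ and $(k-k')\bmod 4$, and in each case argue by the following dichotomy. Either $m_\tau$ meets the threshold of \Cref{UB_for_odd_l}(1) or (4) (appropriate to $(k-k')\bmod 4$), in which case, combined with \Cref{adding_even_many_two_cycles}, we get $\tau\notin\p(k-k',l;m_\tau)$; or $m_\tau$ is small, which forces $m_\rho=m_\sigma-m_\tau$ to be large enough that the inductive hypothesis of the present lemma applied to $k'$ (using part (1) or (2) depending on $k'\bmod 4$), combined again with \Cref{adding_even_many_two_cycles}, yields $\rho\notin\p(k',l;m_\rho)$. Either outcome is a contradiction. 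The main obstacle will be calibrating the numerical thresholds so that one of the two horns of the dichotomy is always triggered; writing $l=3M+2$ so that $2l-1=3(2M+1)$, the residues of $\tfrac{k'(2l-1)}{3}$ and $\tfrac{(k-k')(2l-1)}{3}$ modulo $4$ become predictable in terms of $k'\bmod 4$, and combined with the parity of $m_\rho$ (odd, from the $3$-cycle) and $m_\tau$ (even), these will let us round each strict inequality up to the next admissible value of $m_\rho$ or $m_\tau$, matching the trigger hypotheses of \Cref{adding_even_many_two_cycles} in every sub-case.
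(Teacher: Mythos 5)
Your proposal is correct and follows essentially the same route as the paper: identical base cases via \Cref{general_product_condition}, the same computation showing $kl-m_\sigma-n_\sigma<k-4$, reduction to even-$k$-indecomposability via \Cref{indecomposable}, and the same threshold dichotomy pitting \Cref{UB_for_odd_l} (for the all-$2$-cycles factor) against the lemma's own induction hypothesis (for the factor carrying the $3$-cycle), each combined with \Cref{adding_even_many_two_cycles}; the only difference is that you normalize which factor contains the $3$-cycle while the paper normalizes $k'\bmod 4$. The calibration you defer does work out in every sub-case (writing $2l-1=3(2M+1)$, the all-$2$-cycles factor always has support $\equiv 0\pmod 4$ and the other factor's support is $\equiv 3\pmod 4$, matching the thresholds), so there is no gap.
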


	\begin{proof}
		The proof is by induction on $k$. Let $k=2$ and $\sigma \in A_{\frac{4l-2}{3}+5}$ be a disjoint product of $\frac{2l-1}{3}+1$ many 2-cycles and a 3-cycle. Then $2l-m_{\sigma}-n_{\sigma}=2l-\left(\frac{4l-2}{3}+5\right)-\left(\frac{2l-1}{3}+2\right)=-6.$ By \Cref{general_product_condition}, we conclude that $\sigma \notin \p(2,l;\frac{4l-2}{3}+5)$. 
		
		Let $k=4$ and $\sigma \in A_{\frac{8l-4}{3}+3}$ be a disjoint product of $\frac{2(2l-1)}{3}$ many 2-cycles and a 3-cycle. Then
		$4l-m_{\sigma}-n_{\sigma}=4l-\left(\frac{8l-4}{3}+3\right)-\left(\frac{2(2l-1)}{3}+1\right)=-2.$ By \Cref{general_product_condition}, we conclude that $\sigma \notin \p(4,l;\frac{8l-1}{3}+3)$. Thus the lemma holds when $k=2,4$.
		Let us now assume that the result holds for every $k'\leq k-2$. For any $k\geq 6$, we note the following:
		
		\begin{enumerate}[label=(\alph*)]
			\item When $\sigma$ is as in (1), then $kl-m_{\sigma}-n_{\sigma}=kl-\frac{k(2l-1)}{3}-5-\frac{k(2l-1)}{6}-2=\frac{k-14}{2}<k-4.$
			\item When $\sigma$ is as in (2), then $kl-m_{\sigma}-n_{\sigma}=kl-\frac{k(2l-1)}{3}-3-\frac{k(2l-1)}{6}-1=\frac{k-8}{2}<k-4.$
		\end{enumerate}
		By \Cref{indecomposable}, it is enough to show that in each case $\sigma$ is even-$k$-indecomposable. 
		
		\medskip
		
		\textbf{(1)} On the contrary, assume that $\sigma$ is $(k',k-k')$ decomposable for some even $k'$ such that $2\leq k'\leq k-2$. Thus there exists two disjoint permutations $\rho,\tau \in A_n$ such that $\sigma=\rho\tau$ and $\rho\in \p(k',l; n+5)$ and $\tau \in \p(k-k',l; n+5)$. WLOG we may assume that $k'\equiv 0\;(\Mod\;4), k-k'\equiv 2\;(\Mod\; 4)$.
		
		First we assume that $\rho$ is a disjoint product of 2-cycles. If $m_{\rho}\geq \frac{k'(2l-1)}{3}+4$, then by \Cref{UB_for_odd_l}(4) and \Cref{adding_even_many_two_cycles} (since (d) in the proof of \Cref{UB_for_odd_l} holds), we conclude that $\rho \notin \p(k',l;n+5)$, a contradiction. So we assume that $m_{\rho}<\frac{k'(2l-1)}{3}+4$. This yields $m_{\rho}\leq \frac{k'(2l-1)}{3}.$ Then $m_{\tau}=m_{\sigma}-m_{\rho}\geq \frac{(k-k')(2l-1)}{3}+5.$ By induction hypothesis (1) and by \Cref{adding_even_many_two_cycles} (since (a) holds), we conclude that $\tau \notin \p(k-k',l;n+5)$, once again a contradiction.
		
		Now assume that $\rho$ is a disjoint product of  2-cycles and a 3-cycle. If $m_{\rho}\geq \frac{k'(2l-1)}{3}+3$, then by induction hypothesis (2) and \Cref{adding_even_many_two_cycles} (since (b) holds), we conclude that $\rho \notin \p(k',l;n+5)$, a contradiction. So we assume that $m_{\rho}<\frac{k'(2l-1)}{3}+3$. This yields $m_{\rho}\leq \frac{k'(2l-1)}{3}-1.$ Then $m_{\tau}=m_{\sigma}-m_{\rho}\geq \frac{(k-k')(2l-1)}{3}+6.$ By \Cref{UB_for_odd_l}(1) and by \Cref{adding_even_many_two_cycles} (since (a) holds in the proof of \Cref{UB_for_odd_l}), we conclude that $\tau \notin \p(k-k',l;n+5)$, once again a contradiction.   Thus $\sigma$ is even-$k$-indecomposable in this case.
		
		\vspace{3 mm}
		
		\textbf{(2)} On the contrary let us assume that $\sigma$ is $(k',k-k')$ decomposable for some even $k'$ such that $2\leq k'\leq k-2$. Thus there exists two disjoint permutations $\rho,\tau \in A_n$ such that $\sigma=\rho\tau$ and $\rho\in \p(k',l; n+3)$ and $\tau \in \p(k-k',l; n+3)$. There are two possibilities (i) $k'\equiv 0\;(\Mod\;4), k-k'\equiv 0\;(\Mod\; 4)$, (ii) $k'\equiv 2\;(\Mod\;4), k-k'\equiv 2\;(\Mod\; 4)$.
		
		Consider case (i). WLOG we may assume that $\rho$ is a disjoint product of 2-cycles. If $m_{\rho}\geq \frac{k'(2l-1)}{3}+4$, then by \Cref{UB_for_odd_l}(4) and \Cref{adding_even_many_two_cycles} (since (d) in the proof of \Cref{UB_for_odd_l} holds), we conclude that $\rho \notin \p(k',l;n+3)$, a contradiction. So we assume that $m_{\rho}<\frac{k'(2l-1)}{3}+4$. This yields $m_{\rho}\leq \frac{k'(2l-1)}{3}.$ Then $m_{\tau}=m_{\sigma}-m_{\rho}\geq \frac{(k-k')(2l-1)}{3}+3.$ By induction hypothesis (2) and by \Cref{adding_even_many_two_cycles} (since (b) holds), we conclude that $\tau \notin \p(k-k',l;n+3)$, once again a contradiction.
		
		Now consider case (ii). Once again, WLOG we may assume that $\rho$ is a disjoint product of 2-cycles. If $m_{\rho}\geq \frac{k'(2l-1)}{3}+2$, then by \Cref{UB_for_odd_l}(1) and \Cref{adding_even_many_two_cycles} (since (a) in the proof of \Cref{UB_for_odd_l} holds), we conclude that $\rho \notin \p(k',l;n+3)$, a contradiction. So we assume that $m_{\rho}<\frac{k'(2l-1)}{3}+2$. This yields $m_{\rho}\leq \frac{k'(2l-1)}{3}-2.$ Then $m_{\tau}=m_{\sigma}-m_{\rho}\geq \frac{(k-k')(2l-1)}{3}+5.$ By induction hypothesis (1) and by \Cref{adding_even_many_two_cycles} (since (a) holds), we conclude that $\tau \notin \p(k-k',l;n+3)$, once again a contradiction. This proves that $\sigma$ is even-$k$-indecomposable. Thus $\sigma$ is even-$k$-indecomposable in this case as well and the proof is complete.
	\end{proof}

	\begin{proposition}\label{UB_for_l_odd_k_odd_l=2(mod3)}
		Let $l>3$ be odd, $l\equiv2\;(\Mod\;3)$, and $k\geq 3$ be odd. Further let $n=\frac{k(2l-1)}{3}$. Then the following statements hold:
		\begin{enumerate}
			\item Let $k\equiv 3\;(\Mod\;4)$ and let $\sigma \in A_{n+2}$ be a disjoint product of $\frac{n-1}{2}$ many $2$-cycles and one $3$-cycle. Then $\sigma \notin \mathcal{P}(k,l;n+2)$.
			
			\item Let $k\equiv 3\;(\Mod\;4)$ and let $\sigma \in A_{n+3}$ be a disjoint product of $\frac{n+3}{2}$ many $2$-cycles. Then $\sigma \notin \mathcal{P}(k,l;n+3)$.
			
			\item Let $k\equiv 1\;(\Mod\;4)$ and let $\sigma \in A_{n+1}$ be a disjoint product of $\frac{n+1}{2}$ many $2$-cycles. Then $\sigma \notin \mathcal{P}(k,l;n+1)$.
			
			\item Let $k\equiv 1\;(\Mod\;4)$ and let $\sigma \in A_{n+4}$ be a disjoint product of $\frac{n+1}{2}$ many $2$-cycles and one $3$-cycle. Then $\sigma \notin \mathcal{P}(k,l;n+4)$.
		\end{enumerate}
	In particular, $n(k,l)\leq \frac{k(2l-1)}{3}+1$ when $k\equiv 3\;(\Mod\;4)$ and $n(k,l)\leq \frac{k(2l-1)}{3}$ when $k\equiv 1\;(\Mod\;4)$.
	\end{proposition}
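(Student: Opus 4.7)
The plan is to mirror the approach of Propositions~\ref{UB_for_l=1(mod 3)} and~\ref{UB_for_odd_l} and proceed by strong induction on odd $k$. The base case $k=3$ is verified by a direct computation: in each of (1)--(4) one checks that $3l-m_\sigma-n_\sigma<0$, so \Cref{general_product_condition} immediately yields $\sigma\notin\p(3,l;\cdot)$.

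For the inductive step, a straightforward substitution of $n=k(2l-1)/3$ gives
\[ kl-m_\sigma-n_\sigma \;=\; \tfrac{k-5}{2},\;\tfrac{k-9}{2},\;\tfrac{k-3}{2},\;\tfrac{k-11}{2} \]
in cases (1)--(4) respectively, each of which is strictly less than $k-2$. By \Cref{indecomposable} it therefore suffices to prove that $\sigma$ is $k$-indecomposable. Assume for contradiction that $\sigma=\rho\tau$ with disjoint $\rho,\tau$, $\rho\in\p(k',l;m_\rho)$ and $\tau\in\p(k-k',l;m_\tau)$ for some $2\le k'\le k-2$. As $k$ is odd, exactly one of $k',\,k-k'$ is even; swapping $\rho$ and $\tau$ if needed, WLOG $k'$ is even.

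The remainder of the argument is a finite case analysis along three axes: (i) which of (1)--(4) contains $\sigma$, (ii) whether $k'\equiv 0$ or $2\pmod 4$, and (iii) when a $3$-cycle appears in $\sigma$, whether it lies in $\rho$ or in $\tau$. In each sub-case the strategy is dichotomous: either $m_\rho$ already reaches the threshold at which \Cref{UB_for_odd_l} (when $\rho$ is a disjoint product of $2$-cycles only) or \Cref{auxillary_lemma_k_even} (when $\rho$ additionally contains a $3$-cycle), boosted by \Cref{adding_even_many_two_cycles}, excludes $\rho\in\p(k',l;m_\rho)$; or else $m_\rho$ is strictly below that threshold, so that $m_\tau=m_\sigma-m_\rho$ is large enough for the induction hypothesis at the odd index $k-k'$, again combined with \Cref{adding_even_many_two_cycles}, to exclude $\tau\in\p(k-k',l;m_\tau)$. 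Both branches yield contradictions. A crucial congruence observation is that $(2l-1)/3\equiv 3\pmod 4$ whenever $l$ is odd with $l\equiv 2\pmod 3$, so $n\equiv 3k\pmod 4$; this determines the residue modulo $4$ of each relevant $m$ and is what makes the $+4$-jump inequalities of \Cref{adding_even_many_two_cycles} actually tight at the thresholds appearing in \Cref{UB_for_odd_l} and \Cref{auxillary_lemma_k_even}.

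The main obstacle is exactly this modular bookkeeping: across the roughly dozen sub-cases one must verify that every threshold inequality $m_\rho\ge A$ or $m_\tau\ge B$ is actually realised in the correct residue class modulo $4$, matching the congruence demanded by the exclusion result being invoked. Once $k$-indecomposability is established in all four cases, the ``In particular'' consequence is immediate: case (1) furnishes $\sigma\in A_{n+2}\setminus\p(k,l;n+2)$ when $k\equiv 3\pmod 4$, giving $n(k,l)\le n+1$, and case (3) furnishes $\sigma\in A_{n+1}\setminus\p(k,l;n+1)$ when $k\equiv 1\pmod 4$, giving $n(k,l)\le n$.
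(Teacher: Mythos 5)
Your overall strategy is the same as the paper's: strong induction on odd $k$, the four counts $\tfrac{k-5}{2},\tfrac{k-9}{2},\tfrac{k-3}{2},\tfrac{k-11}{2}$ are all computed correctly and are indeed $<k-2$, and the dichotomy in the inductive step (either $m_\rho$ meets the threshold of \Cref{UB_for_odd_l} or \Cref{auxillary_lemma_k_even} boosted by \Cref{adding_even_many_two_cycles}, or else $m_\tau$ is large enough for the induction hypothesis at the odd index $k-k'$) is exactly how the paper argues, including the mod-$4$ bookkeeping via $(2l-1)/3\equiv 3\pmod 4$.

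However, there is a genuine gap in your base case. Statements (3) and (4) assume $k\equiv 1\pmod 4$, so their base case is $k=5$, not $k=3$, and your claim that ``in each of (1)--(4) one checks that $3l-m_\sigma-n_\sigma<0$'' fails there. For (4) at $k=5$ a direct computation does give $5l-m_\sigma-n_\sigma=-3<0$, so \Cref{general_product_condition} applies; but for (3) at $k=5$ one gets $5l-m_\sigma-n_\sigma=+1$, which is \emph{positive}, so \Cref{general_product_condition} yields nothing. The paper closes this case by a separate argument: it shows $\sigma$ is $(2,3)$-indecomposable (applying \Cref{general_product_condition} to each putative factor $\rho\in\p(2,l;m_\rho)$ and $\tau\in\p(3,l;m_\tau)$, using $m_\rho\equiv 0\pmod 4$ to push $m_\rho$ below $\tfrac{4l-2}{3}$ and hence $m_\tau\geq 2l+2$), and only then invokes \Cref{indecomposable} with the bound $1<k-2=3$. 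Without this extra step (or an explicit verification that your general inductive machinery, anchored at the $k=3$ instances of (1) and (2), already covers $k=5$ in case (3)), the induction has no valid starting point for the $k\equiv 1\pmod 4$ branch, and the ``in particular'' conclusion $n(k,l)\leq \tfrac{k(2l-1)}{3}$ for $k\equiv 1\pmod 4$ is not established.
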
	
	
	\begin{proof}
		The proof is by induction on $k$. For $k=3$, we first consider the statement (1). 
		Let $n=\frac{k(2l-1)}{3}=2l-1$ and let $\sigma \in A_{2l+1}$ be a disjoint product of $l-1$ many $2$-cycles and one $3$-cycle. Then
		we have $3l-m_{\sigma}-n_{\sigma}=3l-(2l+1)-l=-1 <0$. Thus $\sigma \notin \mathcal{P}(3,l;2l+1)$ by \Cref{general_product_condition}.  
		For statement (2), let $\sigma \in A_{2l+2}$ be a disjoint product of $l+1$ many $2$-cycles. Then $3l-m_{\sigma}-n_{\sigma}=3l-2l-2-l-1=-3 <0$. Thus $\sigma \notin \mathcal{P}(3,l;2l+2)$ once again by \Cref{general_product_condition}.

		For $k=5$, we first consider the statement $(3)$. We have $n=\frac{5(2l-1)}{3}=\frac{10l-5}{3}$ and let $\sigma \in A_{\frac{10l-2}{3}}$ 
		be a disjoint product of $\frac{10l-2}{6}$ many $2$-cycles. Then $5l-m_{\sigma}-n_{\sigma}=5l-\frac{10l-2}{3}-\frac{10l-2}{6}=1<3$.
		By \Cref{indecomposable}, in order to show that $\sigma \notin \mathcal{P}(5,l;\frac{10l-2}{3})$, it is enough to show that $\sigma$ is $(2,3)$-indecomposable. Suppose that there exist $\rho,\tau$ disjoint permutations in $A_{\frac{10l-2}{3}}$ such that $\sigma =\rho \tau$ and $\rho\in \p(2,l;m_{\rho})$, $\tau \in \p(3,l;m_{\tau})$. If $m_{\rho}\geq \frac{4l+4}{3}$, then $n_{\rho}\geq \frac{2(l+1)}{3}$ and $2l-m_{\rho}-n_{\rho}\leq -2<0$, which implies $\rho \notin \p(2,l;m_{\tau})$ by \Cref{general_product_condition}, a contradiction. Note that $m_{\rho}\equiv 0\; (\Mod\;4)$. So let $m_{\rho}\leq \frac{4l-2}{3}$. Then $m_{\tau}=m_{\sigma}-m_{\rho}\geq 2l+2$ and $n_{\tau}\geq l+1$. 
		Thus $3l-m_{\tau}-n_{\tau}\leq -3<0$, which implies that $\tau\notin \p(3,l;m_{\tau})$ by \Cref{general_product_condition}, a contradiction. Hence $\sigma$ is $(2,3)$-indecomposable as desired.

		For statement (4), let $\sigma \in A_{\frac{10l+7}{3}}$ be a disjoint product of $\frac{10l-2}{6}$ many $2$-cycles and one $3$-cycle. Note that $5l-m_{\sigma}-n_{\sigma}=5l-\frac{10l+7}{3}-\frac{10l+4}{6}=-3 <0$. Thus $\sigma \notin \mathcal{P}(5,l;\frac{10l+7}{3})$ by \Cref{general_product_condition}.
		
		\medskip
		
		Assume that the result holds for all odd $3\leq k'<k$. We want to prove the result for $k$. The followings hold for every $k$ such that $k\geq 3$.
		
		\begin{enumerate}[label=(\alph*)]
			\item If $\sigma$ is as in (1), then $kl-m_{\sigma}-n_{\sigma}= kl-\frac{k(2l-1)}{3}-2-\frac{k(2l-1)-3}{6}-1< k-2$.
			\item If $\sigma$ is as in (2), then $kl-m_{\sigma}-n_{\sigma}= kl-\frac{k(2l-1)}{3}-3-\frac{k(2l-1)+9}{6}< k-2$.
			\item If $\sigma$ is as in (3), then $kl-m_{\sigma}-n_{\sigma}= kl-\frac{k(2l-1)}{3}-1-\frac{k(2l-1)+3}{6}< k-2$.
			\item If $\sigma$ is as in (4), then $kl-m_{\sigma}-n_{\sigma}= kl-\frac{k(2l-1)}{3}-4-\frac{k(2l-1)+3}{6}-1< k-2$.
		\end{enumerate}
		
		\noindent By \Cref{indecomposable}, it is enough to show that $\sigma$ in each case is $k$-indecomposable which we now establish.
		
		\medskip
		
		\textbf{(1)} On the contrary, suppose that there exist two disjoint permutations $\rho, \tau \in A_n$ such that $\sigma=\rho \tau$ and $\rho \in \p(k',l;m_{\rho})$ and $\tau \in \p(k-k',l;m_{\tau})$ for some $k'$, $2\leq k' \leq k-2$. WLOG we may assume that $k'$ is even. 
		
		\medskip
		
		\textbf{\underline{Case A}:} Suppose that $k'\equiv 0\;(\Mod\;4)$ and $\rho$ is a disjoint product of 2-cycles only. Note that $k-k'\equiv 3\;(\Mod\; 4)$. If $m_{\rho}\geq \frac{k'(2l-1)}{3}+4$ then by \Cref{UB_for_odd_l}(4) and \Cref{adding_even_many_two_cycles} (since (d) in the proof of \Cref{UB_for_odd_l} holds), $\rho\notin \p(k',l;m_{\rho})$, which is a contradiction. Let $m_{\rho}\leq \frac{k'(2l-1)}{3}$. Then $m_{\tau}=m_{\sigma}-m_{\rho}\geq \frac{(k-k')(2l-1)}{3}+2$. Thus by induction hypothesis $(1)$ and \Cref{adding_even_many_two_cycles} (since (a) holds), $\tau \notin \p(k-k',l;m_{\tau})$, which is a contradiction. 
		
		\medskip
		
		\textbf{\underline{Case B}:} Suppose that $k'\equiv 0\;(\Mod\;4)$ and $\rho$ is a disjoint product of one $3$-cycle and $n_{\rho}-1$ many $2$-cycles. If $m_{\rho}\geq \frac{k'(2l-1)}{3}+3$ then by \Cref{auxillary_lemma_k_even}(2) and \Cref{adding_even_many_two_cycles} (since (b) in the proof of \Cref{auxillary_lemma_k_even} holds), $\rho\notin \p(k',l;m_{\rho})$. Let $m_{\rho}\leq \frac{k'(2l-1)}{3}-1$. Then $m_{\tau}=m_{\sigma}-m_{\rho}\geq \frac{(k-k')(2l-1)}{3}+3$ and hence by induction hypothesis $(2)$ and \Cref{adding_even_many_two_cycles} (since (b) holds), $\tau\notin \p(k-k',l;m_{\tau})$, which is a contradiction. 
		
		\medskip
		
		\textbf{\underline{Case C}:} Suppose that $k'\equiv 2\;(\Mod\;4)$ and $\rho$ is a disjoint product of $2$-cycles only. Note that $k-k'\equiv 1\;(\Mod\;4)$. 
		If $m_{\rho}\geq \frac{k'(2l-1)}{3}+2$ then by \Cref{UB_for_odd_l}(1) and \Cref{adding_even_many_two_cycles} (since (a) in the proof of \Cref{UB_for_odd_l} holds), $\rho\notin \p(k',l;m_{\rho})$. Suppose that $m_{\rho}\leq \frac{k'(2l-1)}{3}-2$. Then $m_{\tau}\geq \frac{(k-k')(2l-1)}{3}+4$. Thus, by induction hypothesis $(4)$ and \Cref{adding_even_many_two_cycles} (since (d) holds), $\tau\notin \p(k-k',l;m_{\tau})$, which is once again a contradiction.
		
		\medskip
		
		\textbf{\underline{Case D}:} Suppose that $k'\equiv 2\;(\Mod\;4)$ and $\rho$ is a disjoint product of one $3$-cycle and $n_{\rho}-1$ many $2$-cycles only. 
		If $m_{\rho}\geq \frac{k'(2l-1)}{3}+5$ then by \Cref{auxillary_lemma_k_even}(1) and \Cref{adding_even_many_two_cycles} (since (a) in the proof of \Cref{auxillary_lemma_k_even} holds), $\rho\notin \p(k',l;m_{\rho})$. Suppose that $m_{\rho}\leq \frac{k'(2l-1)}{3}+1$. Then $m_{\tau}=m_{\sigma}-m_{\rho}\geq \frac{(k-k')(2l-1)}{3}+1$. Thus, by induction hypothesis (3) and \Cref{adding_even_many_two_cycles} (since (c) holds), $\tau\notin \p(k-k',l;m_{\tau})$.
		We get our final contradiction and conclude that $\sigma$ is $k$-indecomposable.
		
		\medskip
		
		\textbf{(2)} On the contrary, suppose that there exist two disjoint permutations $\rho, \tau \in A_n$ such that $\sigma=\rho \tau$ and $\rho \in P(k',l;m_{\rho})$ and $\tau \in P(k-k',l;m_{\tau})$ for some $k'$, $2\leq k' \leq k-2$. WLOG we may assume that $k'$ is even.  
		
		\medskip
		
		\textbf{\underline{Case A}:} Suppose that $k'\equiv 0\;(\Mod\;4)$. If $m_{\rho}\geq \frac{k'(2l-1)}{3}+4$ then by \Cref{UB_for_odd_l}(4) and \Cref{adding_even_many_two_cycles} (since (d) in the proof of \Cref{UB_for_odd_l} holds), $\rho \notin \p(k',l;m_{\rho})$, a contradiction. Assume that $m_{\rho}\leq \frac{k'(2l-1)}{3}$. Then $m_{\tau}=m_{\sigma}-m_{\rho}\geq m_{\rho}\geq \frac{(k-k')(2l-1)}{3}+3$. By induction hypothesis (2) (as $k-k'\equiv 3\;(\Mod\;4)$) and \Cref{adding_even_many_two_cycles} (since (b) holds), $\tau\notin \p(k-k',l:m_{\tau})$, which is once again a contradiction.
		
		\medskip
		
		\textbf{\underline{Case B}:} Suppose that $k'\equiv 2\;(\Mod\;4)$. If $m_{\rho}\geq \frac{k'(2l-1)}{3}+2$ then by \Cref{UB_for_odd_l}(1) and \Cref{adding_even_many_two_cycles} (since (a) in the proof of \Cref{UB_for_odd_l} holds), $\rho \notin \p(k',l;m_{\rho})$, a contradiction. Assume that $m_{\rho}\leq \frac{k'(2l-1)}{3}-2$. Then $m_{\tau}=m_{\sigma}-m_{\rho}\geq m_{\rho}\geq \frac{(k-k')(2l-1)}{3}+5$ and by induction hypothesis (3) and \Cref{adding_even_many_two_cycles} (since (c) holds), $\tau\notin \p(k-k',l:m_{\tau}+3)$. We get our final contradiction and conclude that $\sigma$ is $k$-indecomposable.
		
		\medskip
		
		\textbf{(3)} On the contrary, suppose that there exist two disjoint permutations $\rho, \tau \in A_n$ such that $\sigma=\rho \tau$ and $\rho \in P(k',l;m_{\rho})$ and $\tau \in P(k-k',l;m_{\tau})$ for some $k'$, $2\leq k' \leq k-2$. WLOG we may assume that $k'$ is even.  
		
		\medskip
		
		\textbf{\underline{Case A}:} Suppose that $k' \equiv 0\;(\Mod\;4)$. Then 
		$\frac{k'(2l-1)}{3}\equiv 0\;(\Mod\;4)$. If $m_{\rho}\geq \frac{k'(2l-1)}{3}+4$ then by \Cref{UB_for_odd_l}(4) and \Cref{adding_even_many_two_cycles} (since (d) in the proof of \Cref{UB_for_odd_l} holds), $\rho\notin \p(k',l;m_{\rho})$, which is a contradiction. Assume that $m_{\rho}\leq \frac{k'(2l-1)}{3}$. Then $m_{\tau}=m_{\sigma}-m_{\rho}\geq \frac{(k-k')(2l-1)}{3}+1$ and thus by induction hypothesis $(3)$ and \Cref{adding_even_many_two_cycles} (since (c) holds), $\tau \notin \p(k-k',l;m_{\tau})$, which is once again a contradiction.
		
		\medskip
		
		\textbf{\underline{Case B}:} Suppose that $k' \equiv 2\;(\Mod\;4)$. Then 
		$\frac{k'(2l-1)}{3}\equiv 2\;(\Mod\;4)$. If $m_{\rho}\geq \frac{k'(2l-1)}{3}+2$ then by \Cref{UB_for_odd_l}(1) and \Cref{adding_even_many_two_cycles} (since (a) in the proof of \Cref{UB_for_odd_l} holds), $\rho\notin \p(k',l;m_{\rho})$, which is a contradiction. Assume that $m_{\rho}\leq \frac{k'(2l-1)}{3}-2$. Then $m_{\tau}=m_{\sigma}-m_{\rho}\geq \frac{(k-k')(2l-1)}{3}+3$. By induction hypothesis (2) and \Cref{adding_even_many_two_cycles} (since (b) holds), $\tau \notin \p(k-k',l;m_{\tau})$.  This is also a contradiction. We conclude that $\sigma$ is $k$-indecomposable.
		
		\medskip
		
		\textbf{(4)} On the contrary, suppose that there exist two disjoint permutations $\rho, \tau \in A_n$ such that $\sigma=\rho \tau$ and $\rho \in P(k',l;m_{\rho})$ and $\tau \in P(k-k',l;m_{\tau})$ for some $k'$, $2\leq k' \leq k-2$. WLOG we may assume that $k'$ is even.  
		
		\medskip
		
		\textbf{\underline{Case A}:} Assume that $k'\equiv 0\;(\Mod \;4)$ and $\rho$ is a disjoint product of $2$-cycles only. If $m_{\rho}\geq \frac{k'(2l-1)}{3}+4$ then by
		\Cref{UB_for_odd_l}(4) and \Cref{adding_even_many_two_cycles} (since (d) in the proof of \Cref{UB_for_odd_l} holds), $\rho\notin \p(k',l;m_{\rho})$, which is a contradiction. Assume that $m_{\rho}\leq \frac{k'(2l-1)}{3}$. Then $m_{\tau}=m_{\sigma}-m_{\rho}\geq \frac{(k-k')(2l-1)}{3}+4$, and thus by induction hypothesis (4) as $k-k'\equiv 1\;(\Mod\;4)$  and \Cref{adding_even_many_two_cycles} (since (d) holds), $\tau\notin \p(k-k',l;m_{\tau})$, which is a contradiction. 
		
		\medskip
		
		\textbf{\underline{Case B}:}  Assume that $k'\equiv 0\;(\Mod \;4)$ and $\rho$ is a disjoint product of one $3$-cycle and $n_{\rho}-1$ many $2$-cycles only. 
		If $m_{\rho}\geq \frac{k'(2l-1)}{3}+3$ then by \Cref{auxillary_lemma_k_even}(2) and \Cref{adding_even_many_two_cycles} (since (b) in the proof of \Cref{auxillary_lemma_k_even} holds), $\rho\notin \p(k',l;m_{\rho})$, which is a contradiction. Assume that $m_{\rho}\leq \frac{k'(2l-1)}{3}-1$. Then $m_{\tau}=m_{\sigma}-m_{\rho}\geq \frac{(k-k')(2l-1)}{3}+5$, and thus by induction hypothesis $(3)$ as $k-k'\equiv 1\;(\Mod\; 4)$ and \Cref{adding_even_many_two_cycles} (since (c) holds), $\tau\notin \p(k-k',l;m_{\tau})$, which is a contradiction.
		
		\medskip
		
		\textbf{\underline{Case C}:} Assume that $k'\equiv 2\;(\Mod \;4)$ and $\rho$ is a disjoint product of $2$-cycles only. If $m_{\rho}\geq \frac{k'(2l-1)}{3}+2$ then by \Cref{UB_for_odd_l}(1) and \Cref{adding_even_many_two_cycles} (since (a) in the proof of \Cref{UB_for_odd_l} holds), $\rho\notin \p(k',l;m_{\rho})$, which is a contradiction. Assume that $m_{\rho}\leq \frac{k'(2l-1)}{3}-2$. Then $m_{\tau}=m_{\sigma}-m_{\rho}\geq \frac{(k-k')(2l-1)}{3}+6$, and thus by induction hypothesis (1) as $k-k'\equiv 3\;(\Mod\; 4)$ and \Cref{adding_even_many_two_cycles} (since (a) holds), $\tau\notin \p(k-k',l;m_{\tau})$, which is a contradiction. 
		
		\medskip
		
		\textbf{\underline{Case D}:} Assume that $k'\equiv 2\;(\Mod \;4)$ and $\rho$ is a disjoint product of one $3$-cycle and $n_{\rho}-1$ many $2$-cycles. 
		If $m_{\rho}\geq \frac{k'(2l-1)}{3}+5$ then by \Cref{auxillary_lemma_k_even}(1) and \Cref{adding_even_many_two_cycles}, $\rho\notin \p(k',l;m_{\rho})$ (since (a) in the proof of \Cref{auxillary_lemma_k_even} holds), which is a contradiction. Assume that $m_{\rho}\leq \frac{k'(2l-1)}{3}+1$. Then $m_{\tau}=m_{\sigma}-m_{\rho}\geq \frac{(k-k')(2l-1)}{3}+3$, and thus by induction hypothesis (2) and \Cref{adding_even_many_two_cycles} (since (b) holds), $\tau\notin \p(k-k',l;m_{\tau})$. This is our final contradiction and we conclude that $\sigma$ is $k$-indecomposable. The proof is now complete.
	\end{proof}
	
	\begin{proof}[\textbf{Proof of \Cref{maintheorem}}]
		We have proved the theorem for $l\equiv 1\;(\Mod\; 3)$ in \Cref{Proof_of_main_theorem_l=1(mod 3)}. For $k$ even and $l\equiv 2\;(\Mod\; 3)$ it is proved in \Cref{Proof_of_main_theorem_k_even_l=2(mod 3)}. For $k$ odd, $l$ odd, and $l\equiv 2\;(\Mod\; 3)$, the proof follows from \Cref{lower_bound_for_k_odd_l_odd_l=2(mod3)} and \Cref{UB_for_l_odd_k_odd_l=2(mod3)}.
		
	\end{proof}

	\subsection*{Acknowledgment} The first named author acknowledges the support through Prime Minister's Research Fellowship from the Ministry of Education, Government of India (PMRF ID: 0601097). The second and third named authors would like to acknowledge the support of IISER Mohali institute post-doctoral fellowship during this work. We also thank Prof. Amit Kulshrestha for his interest in this work. The third named author thanks his post-doctoral mentor Prof. Chetan Balwe for his support.
	\bibliographystyle{amsalpha}
	\bibliography{References}
\end{document}